\newtheorem{thm}{Theorem}
\newtheorem{cor}[thm]{Corollary}
\newtheorem{prop}[thm]{Proposition}
\newtheorem{lem}[thm]{Lemma}
\newtheorem{defn}[thm]{Definition}
\newtheorem{question}[thm]{Question}
\theoremstyle{definition}
\newtheorem{fact}[thm]{Fact}
\theoremstyle{remark}
\newtheorem{remark}[thm]{Remark}
\numberwithin{thm}{section}
\numberwithin{equation}{thm}
\newcommand{\bP}{\ensuremath{{\mathbb{P}}}}
\newcommand{\C}{\ensuremath{{\mathbb{C}}}}
\newcommand{\bZ}{\ensuremath{{\mathbb{Z}}}}
\renewcommand{\P}{\ensuremath{{\mathbb{P}}}}
\newcommand{\bQ}{\ensuremath{{\mathbb{Q}}}}
\newcommand{\bF}{\ensuremath{{\mathbb{F}}}}
\newcommand{\F}{\ensuremath{{\mathbb{F}}}}
\newcommand{\N}{\ensuremath{{\mathbb{N}}}}
\newcommand{\p}{\ensuremath{{\mathfrak{p}}}}
\newcommand{\fp}{\ensuremath{{\mathfrak{p}}}}
\newcommand{\fq}{\ensuremath{{\mathfrak{q}}}}
\newcommand{\fo}{\ensuremath{{\mathfrak{o}}}}
\newcommand{\cZ}{\mathcal Z}
\newcommand{\cK}{\mathcal K}
\newcommand{\cS}{\mathcal S}
\newcommand{\cU}{\mathcal U}
\newcommand{\cY}{\mathcal Y}
\newcommand{\cX}{\mathcal X}
\newcommand{\cW}{\mathcal W}
\newcommand{\cC}{\mathcal C}
\newcommand{\cP}{\mathcal P}
\newcommand{\lra}{\longrightarrow}
\newcommand{\Kbar}{\ensuremath {\overline K}}
\newcommand{\afp}[1]{| #1 |_{\fp}}
\renewcommand{\O}{\ensuremath{{\mathcal{O}}}}
\newcommand{\Fp}{{\mathbb F}_p}
\newcommand{\Fpbar}{\overline {{\mathbb F}}_p}
\DeclareMathOperator{\Supp}{Supp}
\DeclareMathOperator{\Gal}{Gal}
\DeclareMathOperator{\PGL}{PGL}
\DeclareMathOperator{\Aut}{Aut}
\begin{document}

\title{Integral Points in Orbits in Characteristic $p$}



\author{Alexander Carney}
\address{Alexander Carney\\Department of Mathematics\\ University of Rochester\\
Rochester, NY, 14620, USA}
\email{alexanderjcarney@rochester.edu}

\author{Wade Hindes}
\address{Wade Hindes\\Department of Mathematics\\ Texas State University\\
San Marcos, TX, 78666, USA}
\email{wmh33@txstate.edu}

\author{Thomas J. Tucker}
\address{Thomas J. Tucker\\Department of Mathematics\\ University of Rochester\\
Rochester, NY, 14620, USA}
\email{thomas.tucker@rochester.edu}

\subjclass[2010]{Primary 37P15, Secondary 11G50, 11R32, 14G25, 37P05, 37P30}

\keywords{Arithmetic Dynamics, Integral Points}

\date{}

\dedicatory{}

\begin{abstract}
  We prove a characteristic $p$ version of a theorem of Silverman on
  integral points in orbits over number fields and establish a
  primitive prime divisor theorem for polynomials in this setting.  In
  characteristic $p$, the Thue-Siegel-Dyson-Roth theorem is false, so
  the proof requires new techniques from those used by
  Silverman.  The problem is largely that isotriviality can arise in subtle ways, and we define and compare three different definitions of isotriviality for maps, sets, and curves. Using results of Favre and Rivera-Letelier on the
  structure of Julia sets, we
  prove that if $\varphi$ is a non-isotrivial rational function
  and $\beta$ is not exceptional for $\varphi$, then
  $\varphi^{-n}(\beta)$ is a non-isotrivial set for all sufficiently
  large $n$; we then apply diophantine results of Voloch and Wang that
  apply for all non-isotrivial sets.  When $\varphi$ is a polynomial,
  we use the non-isotriviality of $\varphi^{-n}(\beta)$ for large $n$
  along with a partial converse to a result of Grothendieck in
  descent theory to deduce the non-isotriviality of the curve
  $y^\ell = \varphi^n(x) - \beta$ for large $n$ and small primes
  $\ell \not= p$ whenever $\beta$ is not post-critical; this enables
  us to prove stronger results on Zsigmondy sets.  We provide some
  applications of these results, including a finite index theorem for
  arboreal representations coming from quadratic polynomials over
  function fields of odd characteristic.
\end{abstract}

\maketitle


\section{Introduction and Statement of Results}\label{intro}

In \cite[Theorem A]{SilInt}, Silverman proved the following theorem.

\begin{thm}  \label{SilThm} \cite[Theorem A]{SilInt}
  Let $\varphi \in \bQ(z)$ be rational function of degree at least 2,
  and let $\alpha \in \bP^1(\bQ)$.  If $\varphi^2 \notin \bQ[z]$, then
  the set 
  $\{ \varphi^n(\alpha) \; | \; n \in \bZ^+ \}$ contains only finitely many points in $\bZ$.  
\end{thm}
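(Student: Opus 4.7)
The plan is to argue by contradiction: suppose $\{\varphi^n(\alpha)\}$ contains infinitely many integers, and set $\alpha_n = \varphi^n(\alpha)$. If $\alpha$ is preperiodic then the orbit is finite, so we may assume $h(\alpha_n) \to \infty$. For integer $\alpha_n$ the non-archimedean places contribute nothing to the Weil height, giving $h(\alpha_n) = \log^+|\alpha_n|_\infty$, so in particular $|\alpha_n|_\infty \to \infty$ along the integer subsequence. A direct inspection of the equation $(\varphi^2)^{-1}(\infty) = \{\infty\}$ shows that $\varphi^2 \in \bQ[z]$ precisely when $\varphi$ is a polynomial or is $\mathrm{PGL}_2(\bQ)$-conjugate to $c/z^d$; hence the hypothesis furnishes $\varphi^2$ with a finite pole and, crucially, rules out a single pole of multiplicity equal to $\deg\varphi$.

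The engine is a Roth-style estimate. When $\alpha_n$ is a large integer, $\alpha_{n-1}$ must be archimedean-close to some finite pole $\beta \in \Qbar$ of $\varphi$ (of multiplicity $e_\beta$), and expanding $\varphi$ at $\beta$ gives
\[
\log|\alpha_n|_\infty \;=\; \log|\varphi(\alpha_{n-1})|_\infty \;\leq\; -e_\beta\log|\alpha_{n-1} - \beta|_\infty + O(1).
\]
Roth's theorem (for $\beta$ irrational algebraic) or a trivial denominator bound (for $\beta \in \bQ$) supplies $-\log|\alpha_{n-1} - \beta|_\infty \leq \mu_\beta\, h(\alpha_{n-1}) + O(1)$ with $\mu_\beta = 2 + \epsilon$ or $\mu_\beta = 1$ respectively. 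Combined with the functorial lower bound $h(\alpha_n) \geq d\, h(\alpha_{n-1}) - O(1)$, where $d = \deg\varphi$, this yields $(d - e_\beta\mu_\beta)\,h(\alpha_{n-1}) = O(1)$, contradicting $h(\alpha_{n-1}) \to \infty$ whenever $e_\beta\mu_\beta < d$.

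The main obstacle is guaranteeing $e_\beta\mu_\beta < d$ for the relevant pole, possibly after replacing $\varphi$ by a suitable iterate. Irrational algebraic poles come in Galois orbits of size $\geq 2$, forcing $e_\beta \leq d/2$ and sitting exactly at the Roth threshold $d/(2+\epsilon)$; a rational pole of multiplicity $d$ is the excluded form $\varphi = s + c/(z-s)^d$. The borderline configurations --- for instance, a quadratic conjugate pair of poles each of multiplicity $d/2$ --- are precisely those where a single Roth inequality is insufficient. I would close these by summing local Roth-type inequalities across multiple poles and across both archimedean and non-archimedean places, using the product formula together with the integrality condition $\sum_{v \neq \infty}\log^+|\alpha_n|_v = 0$ to concentrate all contributions at the archimedean place, and by iterating $\varphi$ to dilute pole multiplicities further; the strength of the hypothesis $\varphi^2 \notin \bQ[z]$, rather than merely $\varphi \notin \bQ[z]$, is exactly what makes this closure possible.
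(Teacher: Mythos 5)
This result is Silverman's \cite[Theorem~A]{SilInt}; the present paper quotes it without reproving it, and only gestures at Silverman's argument in the introduction: take an iterate $\varphi^N$ with $|\varphi^{-N}(\infty)|\geq 3$, apply Siegel's theorem for $S$-integral points on $\bP^1$ minus three points, and pull back via the functoriality of integral points (Lemma~\ref{funct}). Your Roth-based plan is the same strategy in spirit, and the preliminary steps are sound: for integers $\alpha_n$ the height is purely archimedean so $|\alpha_n|_\infty \to\infty$; $\alpha_{n-1}$ must approximate a pole of $\varphi$; $\varphi^2\in\bQ[z]$ is equivalent to $\infty$ being exceptional; and combining a Roth bound with $h(\alpha_n)\geq d\,h(\alpha_{n-1})-O(1)$ is exactly the right engine.

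The gap is precisely the borderline configurations you flag and then defer. ``Summing local Roth-type inequalities'' and ``iterating to dilute pole multiplicities'' are the right slogans, but they are the theorem, not a remark at the end of it. Concretely, you need that for some iterate $\varphi^N$ no pole $\beta$ satisfies $e_\beta\mu_\beta\geq d^N$ (with $\mu_\beta=2+\epsilon$ or $1$ as you set up), and this does not follow automatically from $\varphi^2\notin\bQ[z]$: one must rule out a long chain $\gamma\mapsto\varphi(\gamma)\mapsto\cdots\mapsto\infty$ of nearly totally ramified points above $\infty$. That is a genuine Riemann--Hurwitz argument --- the critical locus of $\varphi$ is bounded, so a long totally ramified chain would have to cycle, which would force $\infty$ to be exceptional, the excluded case --- and Silverman carries it out carefully in \cite{SilInt}. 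Also note the approach the paper sketches avoids this bookkeeping entirely: since $\infty$ is not exceptional, $|\varphi^{-N}(\infty)|\geq 3$ for some $N$, and Siegel's theorem for $\bP^1$ minus a finite set depends only on the \emph{support} of the pulled-back divisor, so multiplicities at poles never enter; Lemma~\ref{funct} then finishes. To complete your version you should either prove the dilution lemma via Riemann--Hurwitz, or shift to the support-based Siegel formulation, at which point the argument collapses to the one-paragraph sketch in the introduction.
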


We prove that the analogous theorem holds for non-isotrivial rational
functions in $\Fp(t)$.  Recall that a rational function in
$\varphi \in \Fp(t)(z)$ is said to be isotrivial if there is a
$\sigma \in \overline{\Fp(t)}(z)$ of degree 1 such that
$\sigma \circ \varphi \circ \sigma^{-1} \in \Fpbar(z)$.  We prove the
following.  

\begin{thm} \label{main-simple}
  Let $\varphi \in \Fp(t)(z)$ be a non-isotrivial rational function of
  degree at least 2, and
  let $\alpha \in \bP^1(\Fp(t))$.  If $\varphi^2 \notin \Fp(t)[z]$, then
  $\{ \varphi^n(\alpha) \; | \; n \in \bZ^+ \}$ contains only finitely many points in $\Fp[t]$.  
\end{thm}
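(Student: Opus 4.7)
The plan is to adapt Silverman's proof of Theorem~\ref{SilThm} with two substitutions: the Voloch--Wang Diophantine inequality for non-isotrivial configurations replaces Roth's theorem (which fails in characteristic $p$), and the announced non-isotriviality of $\varphi^{-k}(\infty)$ for large $k$ supplies the dynamical input needed to make Voloch--Wang applicable. Throughout I may assume $\alpha$ is not preperiodic, since otherwise the orbit is finite; non-isotriviality of $\varphi$ then guarantees $h(\varphi^n(\alpha)) \to \infty$.

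First, I would verify that $\infty$ is not exceptional for $\varphi$. Since $\deg \varphi \geq 2$, the only ways $\infty$ can be exceptional are $\varphi^{-1}(\infty) = \{\infty\}$ (forcing $\varphi \in \Fp(t)[z]$) or $\infty$ sitting in a totally ramified two-cycle $\{\infty, \gamma\}$ (forcing $\varphi^2 \in \Fp(t)[z]$); both are ruled out by hypothesis. Hence for all $k$ sufficiently large the preimage set $T_k := \varphi^{-k}(\infty) \subset \bP^1(\overline{\Fp(t)})$ is non-isotrivial.

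Next, suppose for contradiction that $\varphi^n(\alpha) \in \Fp[t]$ for infinitely many $n$, and set $\gamma_n := \varphi^{n-k}(\alpha)$. Integrality of $\varphi^k(\gamma_n)$ at every finite place of $\Fp(t)$, combined with the identity $h(\varphi^k(\gamma_n)) = d^k h(\gamma_n) + O(1)$ and the local-height functional equation
\[
\log^+ |\varphi^k(z)|_v \;=\; \sum_{\beta \in T_k} e_\beta\, \log^+ \frac{1}{|z - \beta|_v} + O(1)
\]
near poles of $\varphi^k$, forces the entire height $d^k h(\gamma_n) + O(1)$ to be accounted for by $v$-adic proximity of $\gamma_n$ to the points of $T_k$ concentrated at $v = \infty$. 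By pigeonhole on $T_k$, there exists $\beta^* \in T_k$ and an infinite subsequence of indices along which
\[
\sum_v \log^+ \frac{1}{|\gamma_n - \beta^*|_v} \;=\; \frac{d^k}{e_{\beta^*}}\, h(\gamma_n) + O(1),
\]
where $e_{\beta^*}$ is the ramification index of $\varphi^k$ at $\beta^*$. On the other hand, Voloch--Wang applied to $\beta^*$ (non-isotrivial because $T_k$ is) bounds this sum above by $(2 + \varepsilon)\, h(\gamma_n) + O_\varepsilon(1)$. If $k$ can be chosen so that $d^k / e_{\beta^*} > 2$ for every $\beta^* \in T_k$, the two bounds are incompatible as $h(\gamma_n) \to \infty$, yielding the desired contradiction.

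The main obstacle is securing that final inequality $d^k / e_{\beta^*} > 2$. Since $e_{\beta^*}(\varphi^k) = \prod_{i=0}^{k-1} e_{\varphi^i(\beta^*)}(\varphi)$, the ratio $d^k/e_{\beta^*}$ stays equal to $1$ precisely when the full forward orbit $\beta^*, \varphi(\beta^*), \ldots, \varphi^{k-1}(\beta^*)$ consists of totally ramified points of $\varphi$; because $\infty$ is not exceptional this chain must break, but one must verify that enough unramified room accumulates as $k$ grows to push $d^k/e_{\beta^*}$ past $2$ for every $\beta^* \in T_k$ uniformly. This is especially delicate in degree $d = 2$, where even a single totally ramified step contributes a factor of $2$. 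Handling possible wild ramification of $\varphi^k$ in characteristic $p$ and working place-by-place over the finite extension of $\Fp(t)$ where $T_k$ splits are further technical complications but do not affect the asymptotic shape of the estimate.
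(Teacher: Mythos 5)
Your verification that $\infty$ cannot be exceptional given the hypotheses, and your intent to invoke the non-isotriviality of $\varphi^{-k}(\infty)$ for large $k$ so that the Voloch--Wang theorem applies, both match the paper. But the way you then deploy Voloch--Wang contains a genuine error. After writing the local decomposition of $h(\varphi^k(\gamma_n))$ you pigeonhole onto a \emph{single} point $\beta^* \in T_k$ and assert that ``Voloch--Wang applied to $\beta^*$'' yields an approximation bound of the form $\sum_v \log^+ |\gamma_n - \beta^*|_v^{-1} \leq (2+\varepsilon) h(\gamma_n) + O_\varepsilon(1)$. No such bound exists. A single algebraic point (together with its Galois conjugates, if you like) can be of very small degree and is not forced to form a non-isotrivial set---indeed the paper emphasizes that any set of at most three points is automatically isotrivial. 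The $(2+\varepsilon)$ inequality you want is precisely Roth's theorem, which is \emph{false} over function fields of characteristic $p$; in general one cannot beat the trivial Liouville exponent for approximation by a single algebraic number. The Voloch--Wang result (Theorem~\ref{W} in the paper) is qualitatively different: it is a finiteness statement for $S$-integral points relative to an effective divisor whose \emph{entire support} is non-isotrivial. The non-isotriviality is a global property of $T_k$ as a configuration of several points; the moment you single out one $\beta^*$, that leverage is gone, and your final step---engineering $d^k/e_{\beta^*} > 2$ to beat a nonexistent exponent---cannot be salvaged.

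The paper sidesteps this entirely by never descending to individual points. It enlarges $S$ to contain the primes of bad reduction of $\varphi$, applies the functoriality of integrality (Lemma~\ref{funct}) to convert ``$\varphi^n(\alpha)$ is $S$-integral relative to $\beta$'' into ``$\varphi^{n-i}(\alpha)$ is $S$-integral relative to the full pullback divisor $(\varphi^i)^*(\beta)$,'' and then cites Theorem~\ref{W} for the whole non-isotrivial divisor to get finiteness directly. No height inequality, no ramification bookkeeping with $d^k/e_{\beta^*}$, and no Roth exponent are needed. Once Theorem~\ref{noniso-inverse} is in hand (as you correctly assumed), the deduction of Theorem~\ref{main}---and hence Theorem~\ref{main-simple}---is essentially two lines. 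If you want a route that resembles your height-counting approach, you would need a quantitative version of Voloch--Wang for the \emph{entire} non-isotrivial divisor (not one point at a time), but even then the pigeonhole onto a single $\beta^*$ must be avoided.
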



Silverman \cite{SilInt} also proves Theorem \ref{SilThm} over number
fields (see \cite[Theorem B]{SilInt}). Likewise, our most
general form of Theorem \ref{main-simple} is stated in terms of
$S$-integrality and isotriviality for rational functions defined over
finite extensions of $\F_p(t)$.  We will define $S$-integrality in the
next section (see Definition \ref{S-def}).  We give our more general
definition of isotriviality for rational functions here.

 \begin{defn}\label{iso1}
   Let $K$ be a finite extension of $\Fp(t)$ and let $\varphi$ be a
   rational function in $K(z)$.  We say that $\varphi$ is an
   isotrivial rational function
   if there exists $\sigma \in \overline{K}(z)$ of degree 1 such that
   $\sigma \circ \varphi \circ \sigma^{-1} \in \Fpbar(z)$.
 \end{defn}

 Also recall that for a rational function $\varphi \in K(z)$, a
 point $\beta \in \bP^1(K)$ is said to be {\bf exceptional} for $\varphi$ if its total orbit (both forward and backward) is finite. 
 However, for the maps that we consider, this amounts to 
 $\varphi^{-2}(\beta)= \{ \beta \}$ by Riemann-Hurwitz. 
 In particular, since totally inseparable maps are isotrivial (which may be seen by moving fixed points to $0$ and $\infty$), 
 we avoid the more exotic cases of exceptional points arising in positive characteristic; see, for instance, \cite{PolyIt}. With this in place, we state our general form of Theorem \ref{main-simple}.  

 \begin{thm}\label{main}
   Let $K$ be a finite extension of $\Fp(t)$, let $\varphi \in K(z)$
   be a non-isotrivial rational function with $\deg \varphi > 1$, let $S$ be a
finite set of places of $K$, and let $\alpha, \beta \in K$ where
$\beta$ is not exceptional for $\varphi$.   Then 
  $\{ \varphi^n(\alpha) \; | \; n \in \bZ^+ \}$ contains only finitely many points that are
  $S$-integral relative to $\beta$.  
  \end{thm}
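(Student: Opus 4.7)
My plan is to reduce Theorem \ref{main} to a Siegel-type finiteness result for $S$-integral points on $\bP^1$ minus a non-isotrivial set of at least three points, by pulling back the integrality condition along iterates of $\varphi$. I will argue by contradiction: assume $\{\varphi^n(\alpha) : n \in \bZ^+\}$ contains infinitely many points that are $S$-integral relative to $\beta$, and show that this forces the forward orbit of $\alpha$ to be eventually periodic, hence finite, a contradiction.

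The first step is a pullback lemma. For each $m \geq 1$ I factor, over $\Kbar$,
\[
\varphi^m(z) - \beta \;=\; c \prod_{\gamma \in \varphi^{-m}(\beta)} (z - \gamma)^{e_\gamma}
\]
(with the usual modification if $\infty \in \varphi^{-m}(\beta)$). I then enlarge $S$ to a finite set $S_m$ of places of $L := K(\varphi^{-m}(\beta))$ containing the places above $S$, the places of bad reduction of $\varphi$, the poles and zeros of $c$, and the places where distinct $\gamma$'s collide modulo $v$. Away from this degeneracy set, a valuation at which $\varphi^n(\alpha) - \beta$ has no zero must give no zero at each factor $\varphi^{n-m}(\alpha) - \gamma$. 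Hence $\varphi^n(\alpha)$ being $S$-integral relative to $\beta$ (for $n \geq m$) forces $\varphi^{n-m}(\alpha)$ to be $S_m$-integral relative to every point of the set $\varphi^{-m}(\beta)$, viewed in $\bP^1(\Kbar)$.

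The second step is the key new ingredient. Since $\varphi$ is non-isotrivial and $\beta$ is non-exceptional, Riemann--Hurwitz forces $|\varphi^{-m}(\beta)| \to \infty$, and the non-isotriviality-of-preimages theorem announced in the abstract (resting on the Favre--Rivera-Letelier analysis of Berkovich Julia sets at places of genuinely bad reduction) lets me fix $m$ large enough that $B := \varphi^{-m}(\beta)$ is non-isotrivial and has at least three elements. Working now over $L$ with the enlarged $S_m$, I apply the Voloch--Wang integral-points theorem for non-isotrivial sets: only finitely many points of $L$ are $S_m$-integral relative to all of $B$. Combined with step one, this means that the set of values $\varphi^{n-m}(\alpha)$ arising from ``good'' $n$ is finite, so by pigeonhole the forward orbit of $\alpha$ is eventually periodic, finishing the argument.

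The main obstacle is the non-isotriviality input in step two. The other steps amount to a natural reorganization once the non-isotriviality formalism and the Voloch--Wang theorem are available. By contrast, ruling out subtle twisted isotrivialities of large preimage sets in characteristic $p$ has no number-field analogue and is exactly the point where dynamics on Berkovich space must enter; this is what replaces the Thue--Siegel--Dyson--Roth theorem in Silverman's original proof of Theorem \ref{SilThm}.
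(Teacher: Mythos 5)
Your proposal is correct and follows essentially the same route as the paper: use the non-isotriviality-of-preimages result (Theorem \ref{noniso-inverse}) to get a non-isotrivial set $\varphi^{-m}(\beta)$, invoke the Voloch--Wang integral-points theorem for non-isotrivial sets (Theorem \ref{W}), and pull back the integrality condition along $\varphi^m$. The paper packages the pullback step as a standard functoriality lemma (Lemma \ref{funct}) and concludes finiteness directly, whereas you rederive the pullback by hand via explicit factorization of $\varphi^m(z)-\beta$ and then finish by contradiction and pigeonhole; these are interchangeable reorganizations of the same argument.

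One small caution on the factorization step for general rational $\varphi$: $\varphi^m(z)-\beta$ is a rational function, so the display only makes literal sense after clearing denominators (factor $F_m(z)-\beta G_m(z)$ where $\varphi^m=F_m/G_m$), and the exceptional places to include in $S_m$ must also account for where $G_m$ degenerates. The paper's Lemma \ref{funct} handles this bookkeeping cleanly via $\delta_\fp$ and the divisor pullback $\varphi^*(\beta)$, which is why citing it is a tidier route than redoing the enlargement of $S$ by hand. No genuine gap, but it is worth being aware that your inline version is doing exactly what that lemma packages.
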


  The main tools used in the proof of \cite[Theorem A]{SilInt} are
  from diophantine approximation.  Roughly, one takes an inverse image
  $\varphi^{-i} (\infty)$ that contains at least three points and
  applies Siegel's theorem on integral points for the projective line
  with at least three points deleted to conclude that that there only
  finitely many $n$ such that $\varphi^n$ are integral relative to
  $\varphi^{-i}(\infty)$ and thus only finitely many $n+i$ such that
  $\varphi^{n+i}(\alpha)$ is an integer.  Over function fields in
  characteristic $p$, the problem is more complicated since Roth's
  theorem is false; in fact, no improvement on Liouville's theorem is
  possible in general.  There is, however, a weaker version of
  Siegel's theorem, due to Wang \cite[Theorem in $\bP^1(K)$, Page
  337]{Wang1} and Voloch \cite{Voloch}, which states that, for
  function fields in characteristic $p$, there are finitely many
  $S$-integral points on the projective line with a non-isotrivial set
  of points deleted. (Note that this is strictly weaker than Siegel's
  theorem, since any set of three points is automatically isotrivial,
  and there are isotrivial sets of every countable cardinality.)  Basic functorial
  results on integral points thus imply that Theorem \ref{main} will
  hold whenever $\varphi^{-n}(\beta)$ is a non-isotrivial set.  In
  Theorem \ref{noniso-inverse}, we show that $\varphi^{-n}(\beta)$ is
  a non-isotrivial set for large $n$ whenever $\varphi$ is a
  non-isotrivial rational function and $\beta$ is not exceptional,
  using results of Favre and Rivera-Letelier \cite{FRL} on the structure of Julia
  sets at primes of genuinely bad reduction.

In the case where $\varphi$ is a polynomial of separable degree
greater than 1, we can prove a bit more than Theorem \ref{main}.  To
describe our result we need a bit of terminology.  For a sequence
$\{b_n\}_{n=1}^\infty$ of elements of a global field $K$, we say that a place
$\fp$ of $K$ is a {\bf primitive divisor} of $b_n$ if
 \[ \text {$v_\fp(b_n) > 0$ and $v_\fp(b_m) \leq 0$ for all $m <
     n$}. \] For a positive integer $\ell$, we say that $\fp$ is a
 {\bf primitive $\ell$-divisor} of $b_n$ if
 \[ \text {$\fp$ is a primitive divisor of $b_n$ and $\ell \nmid
     v_\fp(b_n)$}. \]
 Given a rational function $\varphi \in K(x)$ and points $\alpha,
 \beta \in K$, we obtain a sequence $\{\varphi^n(\alpha) -
 \beta\}_{n=1}^\infty$.  We define the Zsigmondy set $\cZ(\varphi,
 \alpha, \beta)$ (see \cite{Bang,Zsigmondy}) for $\varphi$,
 $\alpha$, and $\beta$ as 
 \[ \cZ(\varphi, \alpha, \beta) = \{ n \; | \; \text{$\varphi^n(\alpha) -
     \beta$ has no primitive divisors} \}. \]

 Likewise, for a positive integer $\ell$ and $\alpha$, $\beta$, and
 $\varphi$ as above, we define the
 $\ell$-Zsigmondy set $\cZ(\varphi, \alpha, \beta, \ell)$ for
 $\varphi$, $\alpha$, $\beta$, and $\ell$ as
 \[ \cZ(\varphi, \alpha, \beta, \ell) = \{ n \; | \; \text{$\varphi^n(\alpha) -
     \beta$ has no primitive $\ell$-divisors} \}. \]

 We will also need a precise definition of critical points to state
 our next theorem.  Let $\varphi$ be a rational function in $K(z)$.
 We let $\deg_s \varphi$ denote the degree of the maximal separable
 extension of $K(\varphi(z))$ in $K(z)$ and let
 $\deg_i \varphi = (\deg \varphi)/(\deg_s \varphi)$; note that
 $\deg_i \varphi$ is also the largest power $p^r$ of $p$ such that
 $\varphi$ can be written as $\varphi(z) = g(x^{p^r})$ for some
 rational function $g \in K(z)$.  For $\gamma \in \bP^1$, there
 are degree one rational functions $\sigma, \theta \in K(z)$ such that
 $\theta(0)=\gamma$ and $\sigma \circ \varphi \circ \theta (0) = 0$.  We may then write
 $\sigma \circ \varphi \circ \theta (z) = z^e g(z)$ for some rational function $g$
 such that $g(z) \not= 0$.  We call $e$ the {\bf ramification degree}
 of $\varphi$ at $\gamma$ denote it as
 $e_{\varphi}(\gamma/\varphi(\gamma))$.  We say that $\gamma$ is a
 {\bf critical point} of $\varphi$ if
 $e_{\varphi}(\gamma/\varphi(\gamma))  > \deg_i \varphi$.

 We let $O^+_\varphi(\alpha)$ denote the set
 $\{ \varphi^n(\alpha) \; | \; n \in \bZ^+ \}$, called the forward orbit of $\alpha$ with respect to $\phi$.  Moreover, we say that a point $\beta$
 is post-critical if there is a critical point $\gamma$ of $\varphi$
 such that $\beta \in O^+(\gamma)$.

 With this terminology, we have the following two theorems for polynomials. 

 \begin{thm}\label{Z1}
   Let $K$ be a finite extension of $\Fp(t)$, let $f  \in K(z)$ be a
   non-isotrivial polynomial with $\deg f  > 1$, and let $\alpha$ and
   $\beta$ be elements of $K$ such that $\alpha$ is not preperiodic,
   $\beta$ is not post-critical, and $\beta \notin O^+_f (\alpha)$.  Then for any prime
   $\ell \not= p$, the Zsigmondy set $\cZ(f , \alpha, \beta, \ell )$ is
   finite. 
   \end{thm}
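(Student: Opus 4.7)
We argue by contradiction: suppose $\cZ(f, \alpha, \beta, \ell)$ is infinite. The plan is to produce, for some single fixed $m$, infinitely many $K$-rational points on one smooth projective curve $C_m$ that has genus $\geq 2$ and is non-isotrivial, contradicting the function-field Mordell conjecture of Samuel.

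The first step is to extract an $\ell$th-power structure from each $n \in \cZ(f,\alpha,\beta,\ell)$. Fix a finite set of places $S$ of $K$ containing the places of bad reduction of $f$ and the places at which $\alpha$ or $\beta$ is not integral. At a primitive divisor $\fp \notin S$ of $f^n(\alpha) - \beta$, the hypothesis $n \in \cZ(f,\alpha,\beta,\ell)$ gives $\ell \mid v_\fp(f^n(\alpha) - \beta)$ directly. To control the non-primitive contributions we use the factorization
\[
  f^n(x) - \beta \;=\; \prod_{\gamma \in f^{-m}(\beta)} \bigl(f^{n-m}(x) - \gamma\bigr)
\]
combined with the fact that $\beta$ is not post-critical, so that every $f^m$ is étale over $\beta$ and $f^m(x) - \beta$ has $d^m$ distinct roots. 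A good-reduction analysis of how $\alpha \bmod \fp$ sits over iterated preimages of $\beta$, together with finiteness of $\O_S^{\times}/(\O_S^{\times})^{\ell}$ and of the $\ell$-torsion of the $S$-class group (both of which require $\ell \neq p$), allows us to write
$f^n(\alpha) - \beta = c_n w_n^{\ell}$
with $w_n \in K^{\times}$ and with $c_n$ in a fixed finite subset of $K^{\times}/(K^{\times})^{\ell}$. Pigeonholing, we pass to an infinite subsequence on which $c_n = c$ is a single constant.

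Next fix $m$ large and let $C_m$ denote the smooth projective model of the affine curve
\[
  c \, y^{\ell} \;=\; f^m(x) - \beta.
\]
Since $\ell \neq p$, the map $C_m \to \bP^1$ is a tame cyclic $\ell$-cover branched over the $d^m$ distinct finite points of $f^{-m}(\beta)$, and Riemann--Hurwitz forces $g(C_m) \to \infty$ with $m$, so $g(C_m) \geq 2$ for $m$ sufficiently large. Moreover, $C_m$ is non-isotrivial for $m$ large: this is precisely what has been established earlier in the paper by combining the non-isotriviality of the branch locus $f^{-m}(\beta)$ (Theorem \ref{noniso-inverse}) with the partial converse to Grothendieck's descent theorem for covers. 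Samuel's function-field Mordell theorem therefore applies and forces $C_m(K)$ to be finite.

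Finally, for every $n \geq m$ in our subsequence
\[
  c \, w_n^{\ell} \;=\; f^n(\alpha) - \beta \;=\; f^m\!\bigl(f^{n-m}(\alpha)\bigr) - \beta,
\]
so $\bigl(f^{n-m}(\alpha),\, w_n\bigr)$ is a $K$-rational point of $C_m$. Finiteness of $C_m(K)$ forces the set $\{f^{n-m}(\alpha)\}_n$ to be finite, contradicting non-preperiodicity of $\alpha$; thus $\cZ(f,\alpha,\beta,\ell)$ is finite. The main obstacle, and the step where the argument departs from its characteristic-zero counterpart, is the non-isotriviality of $C_m$ in the middle paragraph: this is exactly where Theorem \ref{noniso-inverse} and the descent-theoretic converse to Grothendieck are indispensable. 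The $\ell$th-power extraction in the first step is technical but largely bookkeeping, made tractable by the non-post-critical hypothesis on $\beta$.
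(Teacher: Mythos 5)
Your plan correctly identifies the central object (the curve $C_m$ built from the roots of $f^m(x)-\beta$) and correctly locates the characteristic-$p$ difficulty in proving its non-isotriviality via Theorem~\ref{noniso-inverse} and the descent converse (Corollary~\ref{non-one}/Theorem~\ref{non-curve}). However, there is a genuine gap in the first step, and the route through a \emph{qualitative} Mordell theorem for $K$-points cannot be made to close it.

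The problematic claim is that for $n\in\cZ(f,\alpha,\beta,\ell)$ one can write $f^n(\alpha)-\beta=c_nw_n^\ell$ with $w_n\in K^\times$ and $c_n$ ranging over a \emph{fixed finite} subset of $K^\times/(K^\times)^\ell$. The definition of the $\ell$-Zsigmondy set only forbids $\ell\nmid v_\fp(f^n(\alpha)-\beta)$ at \emph{primitive} primes $\fp$. At a prime $\fp$ with $v_\fp(f^m(\alpha)-\beta)>0$ for some $m<n$, the valuation $v_\fp(f^n(\alpha)-\beta)$ is simply unconstrained modulo $\ell$. Worse, the set of such ``old'' primes grows with $n$: Lemma~\ref{old} bounds its contribution by $\epsilon\,d^n h_f(\alpha)+O_\epsilon(1)$, a bound that tends to infinity even though the implied $\epsilon$ can be made small. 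So one cannot pigeonhole the $\ell$-th-power-free part of $f^n(\alpha)-\beta$ into a finite set, and the points $(f^{n-m}(\alpha),w_n)$ you want to put on $C_m$ will generally be defined only over $\ell$-cyclic extensions $K(\sqrt[\ell]{F(a)})$ whose ramification---hence discriminant---grows with $n$. Samuel's theorem says nothing about these non-$K$-rational points.

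This is precisely why the paper's argument is quantitative rather than qualitative. The paper applies the effective height bound of Szpiro and Kim (Theorem~\ref{SZ}), $h_{\cK_C}(P)\le B_1 d_K(P)+B_2$, directly to the \emph{algebraic} point $P_a=(a,\sqrt[\ell]{F(a)})\in C_m(\overline K)$ without needing $P_a$ to be $K$-rational. The logarithmic discriminant $d_K(P_a)$ is then computed by the conductor--discriminant formula (\cite[Corollary 3.7.4]{Stichtenoth}) and is controlled above by $\sum_{\ell\nmid v_\fp(F(a))}N_\fp$; combined with the fact that $h_{\cK_C}$ is comparable to $\ell\,h(a)/(2g-2)$, this gives the lower bound $\sum_{\ell\nmid v_\fp(F(a))}N_\fp\ge r_1 h(a)+r_2$ (Lemma~\ref{from-Kim}). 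Plugging in $a=f^{n-m}(\alpha)$ yields a lower bound growing like $\delta d^n h_f(\alpha)$ (Lemma~\ref{delta}), which then beats the $\epsilon d^n h_f(\alpha)$ ceiling from Lemma~\ref{old} on the ``old'' primes; the surplus guarantees a primitive $\ell$-divisor. In short: the effective Mordell bound is not a convenience here but the substance of the argument, because the points it is applied to are forced to leave $K$.
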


   \begin{thm}\label{Z2}
     Let $K$ be a finite extension of $\Fp(t)$, let $f  \in
     K(z)$ be a non-isotrivial polynomial with $\deg f  >
     1$, and let $\alpha$ and $\beta$ be elements of $K$ such that
     $\alpha$ is not preperiodic, $\beta$ is not exceptional for $f $,
     and $\beta \notin O^+_f (\alpha)$.  Then the Zsigmondy set
     $\cZ(f , \alpha, \beta)$ is finite.
   \end{thm}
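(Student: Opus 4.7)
The plan is to run a Zsigmondy-type height argument in which Theorem \ref{main} (a function field analog of Siegel's theorem, via the Voloch-Wang result for non-isotrivial sets) plays the role that Thue-Siegel plays in the classical Ingram-Silverman proof. First I would invoke Theorem \ref{noniso-inverse} to choose $N$ large enough that $f^{-N}(\beta) = \{\beta_1, \dots, \beta_s\}$ is a non-isotrivial subset of $\bP^1(\overline{K})$, and pass to a finite extension $L/K$ containing all the $\beta_i$. This gives the factorization
\[
  f^n(\alpha) - \beta \;=\; c \prod_{i=1}^{s}\bigl(f^{n-N}(\alpha) - \beta_i\bigr)^{e_i} \qquad (n \ge N),
\]
where $e_i$ is the ramification index of $f^N$ at $\beta_i$. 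Each $\beta_i$ is non-exceptional, because the total orbit of $\beta_i$ is finite if and only if the total orbit of $\beta$ is; so Theorem \ref{main} applied over $L$, with any fixed finite set of places $S$, guarantees that for all large $n$ there is a place $\mathfrak{P} \notin S$ with $v_{\mathfrak{P}}(f^{n-N}(\alpha) - \beta_i) > 0$ for some $i$, and hence $v_{\mathfrak{P}}(f^n(\alpha) - \beta) > 0$.

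The next task is to upgrade such a divisor to a primitive divisor. Since $f$ is non-isotrivial and $\alpha$ is not preperiodic, one has $\hat h_f(\alpha) > 0$, and the standard telescoping estimate yields
\[
  h\bigl(f^n(\alpha) - \beta\bigr) = d^n\,\hat h_f(\alpha) + O(1), \qquad d = \deg f.
\]
A non-primitive prime $\fp$ satisfies $v_{\fp}(f^m(\alpha) - \beta) > 0$ and $v_{\fp}(f^n(\alpha) - \beta) > 0$ for some $m < n$, which forces $\beta$ to be periodic modulo $\fp$ of period dividing $n-m$, so in particular $\fp$ divides $f^{n-m}(\beta) - \beta$. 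A Taylor expansion of $f^{n-m} - \mathrm{id}$ at $\beta$ bounds the local contribution $v_{\fp}(f^n(\alpha) - \beta)$ in terms of $v_{\fp}(f^m(\alpha) - \beta)$ and the order of vanishing of $(f^{n-m})'$ at $\beta$. Summing these local bounds over all $m < n$ and all non-primitive $\fp$, one shows that the total non-primitive contribution to $h(f^n(\alpha) - \beta)$ is of size $o(d^n)$, so the primitive contribution must be positive for all large $n$, yielding the finiteness of $\cZ(f, \alpha, \beta)$.

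The hard part is the last step: controlling the local non-primitive contribution when $\beta$ is post-critical. In that case derivatives of $f^{n-m}$ at $\beta$ may vanish to high order, inflating the Taylor-based estimate, and the ramification indices $e_i$ appearing in the factorization above must be tracked carefully along the post-critical orbit. The payoff of targeting only primitive divisors (rather than primitive $\ell$-divisors as in Theorem \ref{Z1}) is that one relies only on the non-isotriviality of the set $f^{-N}(\beta)$ supplied by Theorem \ref{noniso-inverse}, and this requires only that $\beta$ be non-exceptional. Applying Theorem \ref{main} directly to preimages $\beta_i$ sidesteps the need for non-isotriviality of the auxiliary covers $y^\ell = f^n(x) - \beta$ used in the proof of Theorem \ref{Z1}, which is precisely why the post-critical hypothesis can be dropped in this weaker Zsigmondy statement.
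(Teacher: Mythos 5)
Your proposal identifies the right ingredients at the level of Theorem \ref{noniso-inverse}, but the core of your argument --- using Theorem \ref{main} to produce a prime and then controlling the non-primitive contribution by a Taylor expansion of $f^{n-m}$ at $\beta$ --- does not work, and it is not what the paper does. Theorem \ref{main} (via Voloch--Wang) is a qualitative Siegel-type finiteness statement: it says the orbit has only finitely many $S$-integral points for a \emph{fixed} finite $S$. To extract a primitive divisor you would need to exclude the primes dividing $f^m(\alpha)-\beta$ for all $m<n$, and this set of primes grows with $n$, so you cannot enlarge $S$ once and for all. What is actually required is a \emph{quantitative} lower bound of size $\gtrsim d^n$ on the weighted mass of primes with $v_\fp(f^n(\alpha)-\beta)>0$, to be played off against the $o(d^n)$ upper bound on the ``already-seen'' primes furnished by Lemma \ref{old}. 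Your proposed Taylor/telescoping estimate is precisely the step that over number fields is made rigorous by Roth's theorem (as in Ingram--Silverman), and Roth is false in characteristic $p$ --- this is the very obstruction the paper is built around.

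The paper instead derives both Theorems \ref{Z1} and \ref{Z2} from Proposition \ref{most-gen}, whose quantitative input (Lemma \ref{delta} via Lemma \ref{from-Kim}) comes from the Szpiro--Kim effective Mordell bound (Theorem \ref{SZ}) applied to the superelliptic curves $y^\ell = F(x)$ built from backward preimages $f^{-r}(\beta)$, with non-isotriviality of those curves supplied by Corollary \ref{non-one} and Theorem \ref{non-curve}. In other words, Theorem \ref{Z2} does \emph{not} sidestep the auxiliary covers $y^\ell = F(x)$; it still uses them. The only thing gained by relaxing from non-postcritical $\beta$ to merely non-exceptional $\beta$ is that one pulls $\beta$ back to some $\gamma\in f^{-r}(\beta)$ which is non-postcritical (possible because $\beta$ is non-exceptional and the backward orbit grows while the postcritical set does not), and chooses $\ell$ prime to both $p$ and $e_{f^r}(\gamma/\beta)$; a primitive $\ell$-divisor found this way is in particular a primitive divisor. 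Your concluding remark that ``targeting only primitive divisors... relies only on the non-isotriviality of the set $f^{-N}(\beta)$'' thus misdescribes the dependency structure: the heavy diophantine input remains the effective Mordell bound, not the Voloch--Wang/Siegel statement.
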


   Theorem \ref{main} is not true in general for isotrivial rational
   functions, and Theorems \ref{Z1} and \ref{Z2} are not true not in
   general for isotrivial polynomials (see \cite{Pezda}).  There are
   some results in the isotrivial case, however (see \cite{HSW}), and
   some of the techniques here do work for a wide class of isotrivial
   rational functions.  We may address these questions in a future paper.

   Theorem \ref{main} is proved by using two different notions of
   isotriviality.  The first is our Definition \ref{iso1} for functions. We now define an isotrivial set.
   Here we use a simple, if inelegant, definition rather than a
   slightly more technical one that generalizes to varieties other
   than $\bP^1$. Below we regard an element of $\Kbar(z)$ as a map
   from $\Kbar \cup \infty$ to itself.  

   \begin{defn}\label{iso2}
     Let $K$ be a finite extension of $\Fp(t)$ and let $\cS$ be a
     finite subset of $\Kbar \cup \infty$.  We say that $\cS$ is a
     isotrivial set if there exists $\sigma \in \Kbar(z)$ of degree 1 such
     that $\sigma (\cS) \subseteq \Fpbar \cup \infty$.  
   \end{defn}

   We note that if $\varphi$ is a non-isotrivial rational function the
   set $\varphi^{-1}(\beta)$ may still be an isotrivial set; for example any
   set of three or fewer elements is an isotrivial set, but there are
   non-isotrivial rational functions of degree 2 and 3.  
   
   Theorem \ref{Z1} is proved using a third notion of isotriviality,
   this time for curves.

   \begin{defn}\label{iso3}
     Let $K$ be a finite extension of $\Fp(t)$ and let $C$ be a curve
     defined over $K$.  We say that $C$ is an isotrivial curve if there is a
     curve $C'$ defined over a finite extension $k'$ of $K \cap
     \Fpbar$ and a  finite extension $K'$ of $K$ such that
   \[ C \times_K K' \cong C' \times_{k'} K'. \]
     \end{defn}
  
     An outline of the paper is as follows.  Throughout this paper,
     $K$ is a finite extension of $\Fp(t)$ as in Definitions
     \ref{iso1} ,\ref{iso2}, and \ref{iso3}.  In Section
     \ref{heights}, we introduce some basic facts about heights,
     integral points, and cross ratios that are used throughout the
     paper.  Following that, we prove Theorem \ref{noniso-inverse},
     which says that if $\varphi$ is a non-isotrivial rational
     function of degree greater than 1 and $\beta$ is not exceptional
     for $\varphi$, then $\varphi^{-n}(\beta)$ is a non-isotrivial set
     for all sufficiently large $n$.  The proof uses work of Baker
     \cite{Baker1} and Favre/Rivera-Letelier \cite{FRL} to produce
     elements in $\varphi^{-n}(\beta)$ whose $v$-adic cross ratio is
     not 1 at a place $v$ of bad reduction.  We then apply work of
     \cite{Wang1} (see also \cite{Voloch}) to give a quick proof of
     Theorem \ref{main} in Section \ref{proof-main}.  In Section
     \ref{curves}, we begin by proving Proposition \ref{non-one},
     which states that if the roots of a polynomial $F$ are are
     distinct and form a non-isotrivial set, then the curve $C$ given
     by $y^\ell = F(x)$ is a non-isotrivial curve when $\ell \not= p$
     is a prime that is small relative to the degree of $F$.  The
     techniques we use to do this build upon work in \cite{HJ}; 
     the idea is to use the Adjunction Formula to show that the projection map onto the $x$-coordinate 
     is the unique map $\theta: C\rightarrow \mathbb{P}^1$ of degree $\ell$
     up to change of coordinates on $\bP^1$ (see Lemma
     \ref{lem:mapunique}).  We then use Proposition \ref{non-one} and
     Theorem \ref{noniso-inverse} to show the non-isotriviality of
     curves associated to $\varphi^{-n}(\beta)$, where $\varphi$ is a
     non-isotrivial rational function of degree greater than 1 and
     $\beta$ is not exceptional for $\varphi$, in Theorem
     \ref{non-curve}.  In Section \ref{Z-proofs}, we prove Proposition
     \ref{most-gen}, which immediately implies Theorems \ref{Z1} and
     \ref{Z2}; the proof uses Theorem \ref{noniso-inverse} along with
     height bounds on non-isotrivial curves in characteristic $p$ due
     to Szpiro \cite{Szpiro} and Kim \cite{Kim1} (see Theorem
     \ref{SZ}).  Finally, in Section \ref{applications}, we present
     some applications of our results to other dynamical questions.

     We note that the proof of Theorem \ref{noniso-inverse} works the
     same for function fields in characteristic 0 as for function
     fields in characteristic $p$.  Theorems \ref{main}, \ref{Z1}, and
     \ref{Z2} all hold in stronger forms for function fields in
     characteristic 0, as proved in \cite{GNT}; the main difference
     here is that Yamanoi \cite{Yam} has proved the full Vojta
     conjecture for algebraic points on curves over function fields of
     characteristic 0 (see \cite{Vojta-ABC, Vojta-SV}), whereas
     Theorem \ref{SZ} is weaker than the full Vojta conjecture for
     algebraic points on curves over function fields of characteristic
     $p$.  Analogs of Theorems \ref{Z1} and \ref{Z2} have not yet been
     proved over number fields, except in some very special cases (see
     \cite{Bang,Zsigmondy, Schinzel, post, Rice}), but both theorems
     are implied by the $abc$ conjecture (see \cite{GNT}).

\vskip2mm
\noindent {\bf Acknowledgments.}  We would like to thank Rob
Benedetto, Dragos Ghioca, Minhyong Kim, Carlo Pagano, Joe
Silverman, Dinesh Thakur, Felipe Voloch, and Julie Wang for many helpful
conversations.  We give special thanks to Juan Rivera-Letelier, who
provided us with the argument for Proposition \ref{badcrossratio} and
without whose help this paper likely would not have been possible.  
     
\section{Preliminaries}\label{heights}
In this section we will review some terminology and results on
heights, integral points, and dynamics. For background on heights, see
~\cite{HindrySilverman,Lang, BG}. We set some notation below.

Throughout this paper, $K$ will denote a finite extension of $\Fp(t)$
and $k$ will denote the intersection $K \cap \Fpbar$.  Equivalently, $K$ is
the function field of a smooth, projective curve $B$ defined over
$k$.

\subsection{Places, heights, and reduction}
Let $M_K$ be the set of places of $K$, which corresponds to the set of closed points of $B$.

Since $K$ is a function field, we choose a place $\fq$ of $K$, denote 
\[
\fo_K=\{z\in K \; : \; v_\fp(z)\geq 0\text{ for all }\fp\neq\fq\},
\]
and let $k_\fp$ be the residue field $\fo_K/\fp$. 
Also, define the local degree of $\fp$ to be  
\[
  N_\fp =[k_\fp: k].
\]
Likewise, for each $\fp\in M_K$ we let $|\cdot|_{\fp}$ be a normalized absolute value such that the product formula
\[
\prod_{\fp\in M_K}|z|_{\fp}=1
\]
holds for all $z\in K$. Moreover, we define $K_{\fp}$ to be the completion of
$K$ with respect to $|\cdot|_{\fp}$ and define $\C_\fp$ to be the
completion of the algebraic closure of $K_{\fp}$. 

For $z\in K$, let $h(z)$ denote the logarithmic height of $K$. For $
\varphi \in K(z)$ with $\deg \varphi =d\geq 2$, 
let $h_\varphi(z)$ denote the Call-Silverman canonical height of $z$ relative to $\varphi$~\cite{CallSilverman}, defined by
\[
h_\varphi(z) = \lim_{n\to\infty}\frac{h(\varphi^n(z))}{d^n}.
\]
We will often write sums indexed by primes that 
satisfy some condition. These are taken to be primes of $\fo_K$. 
As an example of our indexing convention, observe that
\[
\sum_{v_\fp(z)>0} v_\fp(z)N_\fp\leq h(z) .
\]

We say that a rational function $\varphi \in K(z)$ has {\bf good
  reduction} at a place $\fp$ of $K$ if the map it induces on $\bP^1$
is non-constant and well-defined modulo $\fp$.  More precisely, we
write $\varphi(x) = f/g$, where all the coefficients of $f$ and $g$
are in $(\fo_K)_\fp$, and either $f$ or $g$ has at least one
coefficient in $(\fo_K)_\fp^*$.  We let $f_\fp$ and $g_\fp$ denote the
reductions of $f$ and $g$ at $\fp$.  We say that $\varphi$ has good
reduction at $\fp$ if $f_\fp$ and $g_\fp$ have no common root in the
algebraic closure of the residue field of $\fp$ and
$\deg (f_\fp / g_\fp) = \deg \varphi$.  We say that $\varphi$ has {\bf
  bad reduction} at $\fp$ if it does not have good reduction at $\fp$.
This notion is dependent on our choice coordinates.  We say that
$\varphi$ has {\bf potentially good reduction} at $\fp$ if there is a
finite extension $K'$ of $K$, a prime $\fq$ of $K'$ lying over $\fp$,
and a degree one rational function $\sigma \in K'(z)$ such that
$\sigma \circ \varphi \circ \sigma^{-1}$ has good reduction at $\fq$.
We say that $\varphi$ has {\bf genuinely bad reduction} at $\fp$ if
$\varphi$ does not have potentially good reduction at $\fp$.

\subsection{Integral points}
Let $S$ be a non-empty finite subset of $M_K$. The ring of $S$-integers in $K$ is defined to be 
\[
\fo_{K,S}:=\{z\in K:\afp{z}\le 1\text{ for all }\fp\notin S\}.
\]

Given a place $\fp$ of $K$ and two points $\alpha=[x_1:y_1]$ and $\beta=[x_2,y_2]$ in $\P^1(\C_{\fp})$, define the \emph{$\fp$-adic chordal metric} $\delta_{\fp}$ by
\[
\delta_{\fp}(\alpha,\beta)=\frac{|x_1y_2-y_1x_2|_{\fp}}{\max\{\afp{x_1},\afp{y_1}\}\cdot\max\{\afp{x_2},\afp{y_2}\}}.
\]
Note that we always have $0\le \delta_{\fp}(\alpha,\beta)\le1$, and that $\delta_{\fp}(\alpha,\beta)=0$ if and only if $\alpha=\beta$. Then the ring $\fo_{K,S}$ is equivalent to the set which is maximally distant from $\infty$ outside of $S$, i.e. the set of $z\in K$ such that 
\[
\delta_{\fp}\left(z,\infty\right)=\delta_{\fp}\left([z:1],[1,0]\right)=1
\]
for all $\fp\notin S$.

We can now extend our definition of $S$-integrality to to any divisor
$D$ on $\bP^1$ that is defined over $K$.

\begin{defn}\label{S-def}
  Fix a non-empty finite set of places $S\subset M_K$.  Let $D$ be an
  effective divisor on $\bP^1$ that is defined over $K$.  Then
  $\alpha \in \bP^1(K)$ is $S$-integral relative to $D$ provided that for
  all places $\fp\notin S$, all $\tau\in\Gal(\Kbar /K)$, and all
  $\beta \in \Supp D$, we have
\[
\delta_{\fp}\left(\alpha,\tau(\beta)\right)=1.
\]
\end{defn}

For affine coordinates $[\alpha:1] \in \bP^1(K)$ and a divisor $D$ defined over
$K$ that does not contain the point at infinity in its support, the
statement that $[\alpha:1]$ is $S$-integral relative to $D$ is equivalent to
\begin{eqnarray*}
\afp{\alpha -\tau(\beta)}\ge 1&\text{  if  }&\afp{\tau(\beta)}\le1,\text{ and}\\
\afp{\alpha} \le 1 & \text{  if  }&\afp{\tau(\beta)}>1
\end{eqnarray*}
for all $\fp\notin S$, all $\tau\in\Gal(\Kbar/K)$, and all $[1:\beta]
  \in \Supp D$.

Let $\theta$ be a linear fractional change of coordinate on $\P^1(\bar
K)$. Then $\alpha$ is $S$-integral relative to $\beta$ if and only if
$\theta(\alpha)$ is $S$-integral relative to $\theta(\beta)$ provided we allow
an enlargement of $S$ depending only on $\theta$. We prove a variant
of this statement for any $\theta\in K[x]$ later in the paper.  The
following is a simple and standard consequence of our definition of
$S$-integrality (see \cite[Corollary 2.4]{Sookdeo}, for example).
Recall that for a point $\alpha \in \bP^1(K)$, the divisor
$\varphi^*(\alpha)$ is defined as $\sum_{\varphi(\beta) = \alpha}
e_\varphi(\beta/\alpha) \beta$.  

\begin{lem}\label{funct}
  Let $\varphi \in K(x)$ and $S$ be a set of primes containing all the
  primes of bad reduction for $\varphi$.  Then, for any
  $\alpha, \gamma \in \bP^1(K)$, we have that $\varphi(\gamma)$ is
  $S$-integral relative to $\alpha$ if and only if $\gamma$ is $S$-integral
  relative to $\varphi^* (\alpha)$. 
\end{lem}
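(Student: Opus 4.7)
The plan is to verify the equivalence one place at a time, exploiting the good reduction of $\varphi$ at every $\fp \notin S$. Unwinding Definition \ref{S-def} and noting that $\Supp \varphi^*(\alpha)$ is already stable under $\Gal(\Kbar/K)$ (since $\alpha \in K$ and $\varphi \in K(x)$), the $S$-integrality of $\varphi(\gamma)$ relative to $\alpha$ is the statement $\delta_\fp(\varphi(\gamma),\tau(\alpha)) = 1$ for all $\fp \notin S$ and all $\tau$, which, as $\alpha \in K$, reduces to $\delta_\fp(\varphi(\gamma),\alpha) = 1$. On the other hand, $S$-integrality of $\gamma$ relative to $\varphi^*(\alpha)$ is the statement $\delta_\fp(\gamma,\beta) = 1$ for every $\fp \notin S$ and every $\beta \in \Supp \varphi^*(\alpha)$ (together with all $\tau$-conjugates, which already appear). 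So the task is to show, at each fixed $\fp \notin S$, that
\[ \delta_\fp(\varphi(\gamma),\alpha) = 1 \iff \delta_\fp(\gamma,\beta) = 1 \text{ for every } \beta \in \Supp \varphi^*(\alpha). \]

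The engine driving this is the standard reduction-theoretic dictionary at primes of good reduction. At such a $\fp$, $\varphi$ descends to a morphism $\bar\varphi \colon \bP^1_{k_\fp} \to \bP^1_{k_\fp}$ of the same degree, reduction commutes with evaluation ($\overline{\varphi(x)} = \bar\varphi(\bar x)$ for any $x \in \bP^1(\C_\fp)$), and the preimage divisor is compatible with reduction, so the reduction of $\Supp \varphi^*(\alpha)$ coincides with $\bar\varphi^{-1}(\bar\alpha)$ inside $\bP^1(\overline{k_\fp})$. Combined with the elementary observation that $\delta_\fp(x,y) < 1$ if and only if $\bar x = \bar y$, I would chain
\[ \delta_\fp(\varphi(\gamma),\alpha) < 1 \iff \bar\varphi(\bar\gamma) = \bar\alpha \iff \bar\gamma = \bar\beta \text{ for some } \beta \in \Supp \varphi^*(\alpha) \iff \delta_\fp(\gamma,\beta) < 1 \text{ for some such } \beta. \]
Taking contrapositives gives exactly the displayed equivalence, and running this at every $\fp \notin S$ concludes the proof.

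The main obstacle is bookkeeping rather than substance: one has to fix, once and for all, an extension of each $\fp$ to $\Kbar$ so that the reduction map is defined on the (typically non-$K$-rational) points of $\Supp \varphi^*(\alpha)$, and then verify that the good-reduction hypothesis really does force $\overline{\varphi^{-1}(\alpha)} = \bar\varphi^{-1}(\bar\alpha)$. That latter point is where the hypothesis is used, and it follows from the nonvanishing modulo $\fp$ of the resultant of the numerator and denominator of $\varphi$, which is the algebraic content of "$\fp$ is a prime of good reduction".
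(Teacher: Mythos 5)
The paper does not actually prove Lemma \ref{funct}; it invokes it as a ``simple and standard consequence'' of the definition of $S$-integrality and cites \cite[Corollary 2.4]{Sookdeo} for a reference. So there is no in-paper argument to compare against. Your reconstruction is nevertheless correct and is the standard reduction-theoretic proof: at a prime $\fp\notin S$ the map has good reduction there (since $S$ contains all bad primes), so reduction commutes with evaluation and with taking fibers, and $\delta_\fp(x,y)<1$ is equivalent to equality after reduction. The chain of equivalences you give, finishing by taking contrapositives, is valid; the homogeneous form $bF-aG$ cutting out the fiber has degree exactly $\deg\varphi$ both before and after reduction (it cannot vanish identically because $\bar F,\bar G$ are coprime of full degree by the good-reduction hypothesis), which is what guarantees $\overline{\Supp\varphi^*\alpha}=\Supp\bar\varphi^*\bar\alpha$ and hence the middle equivalence. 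The only point you flag but do not spell out --- fixing an extension of each $\fp$ to $\Kbar$ so that $\Supp\varphi^*(\alpha)$ reduces --- is harmless because of the quantifier over all $\tau\in\Gal(\Kbar/K)$ already present in Definition \ref{S-def}.
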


\subsection{The cross ratio}

Let $| \cdot |$ be a non-Archimedean absolute value on a field $L$.  For any distinct $x_1, x_2, y_1, y_2 \in L$ we define:
\[ (x_1, x_2; y_1, y_2) = \frac{|x_1 - y_2| |x_2 - y_1|}{|x_1 - y_1|
    |x_2 - y_2|}. \]

We may extend this to points in $x_1, x_2, y_1, y_2 \in L \cup \infty$
by eliminating the terms involving $\infty$; for example,
\[ (\infty, x_2; y_1, y_2) = \frac{|x_2 - y_1|}{ |x_2 - y_2|}. \]

Importantly, for $\sigma \in\PGL_2(L)$, we have $[z_1,z_2;z_3,z_4] =
[\sigma z_1,\sigma z_2;\sigma z_3,\sigma z_4]$.   This is easily seen
by noting that an element of $\PGL_2(L)$ is a composition of
translations, scaling maps, and the map sending every element to its
multiplicative inverse, and that $[z_1,z_2;z_3,z_4]$ is invariant
under all these types of maps.

We will use the following two lemmas for points $x_1, x_2, y_1, y_2 \in
L$.  The first lemma is immediate.
\begin{lem}\label{con}
  Suppose that $|x_1| < |y_1| < |x_2| < |y_2|$.  Then
  \[  (x_1, x_2; y_1, y_2) = \frac{|y_2| |x_2|}{|y_1| |y_2|} > 1. \]
 \end{lem}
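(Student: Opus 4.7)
The plan is to apply the sharpened ultrametric identity: for a non-Archimedean absolute value, if $|a| \neq |b|$, then $|a-b| = \max(|a|, |b|)$. Under the chain of strict inequalities $|x_1| < |y_1| < |x_2| < |y_2|$, each of the four differences appearing in the definition of $(x_1, x_2; y_1, y_2)$ pairs elements of strictly distinct absolute value, so this identity applies verbatim to all four factors.

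Concretely, I would compute in order $|x_1 - y_1| = |y_1|$ (since $|x_1| < |y_1|$), $|x_2 - y_2| = |y_2|$ (since $|x_2| < |y_2|$), $|x_1 - y_2| = |y_2|$ (since $|x_1| < |y_2|$, using transitivity of the hypothesis), and $|x_2 - y_1| = |x_2|$ (since $|y_1| < |x_2|$). Plugging these four evaluations into
\[ (x_1, x_2; y_1, y_2) = \frac{|x_1-y_2|\,|x_2-y_1|}{|x_1-y_1|\,|x_2-y_2|} \]
gives the displayed equality $|y_2||x_2|/(|y_1||y_2|)$, which cancels to $|x_2|/|y_1|$. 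The final inequality $|x_2|/|y_1| > 1$ is then just the hypothesis $|y_1| < |x_2|$ rearranged.

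There is no real obstacle here; as the excerpt remarks, the lemma is immediate. The only point requiring care is that each application of the ultrametric identity needs the two terms to have \emph{strictly} different absolute values, which is exactly why the hypothesis is stated as a strict chain of inequalities rather than a weak one.
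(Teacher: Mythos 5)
Your proof is correct and is exactly the computation the paper has in mind when it labels the lemma ``immediate'': apply the ultrametric equality $|a-b|=\max(|a|,|b|)$ (valid since $|a|\neq|b|$) to each of the four differences and simplify. Nothing more to add.
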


 \begin{lem}\label{two}
   Suppose that there are points $a_1, a_2 \in L$ such that $|x_1 - a_1|,
   |y_1 - a_1| <
   |a_1 - a_2|$ and $|x_2 - a_2|, |y_2 - a_2| < |a_1 - a_2|$.   Then
   \[  (x_1, x_2; y_1, y_2) > 1 .\]
 \end{lem}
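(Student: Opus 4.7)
The plan is to apply the ultrametric (non-Archimedean) triangle inequality to each of the four differences appearing in the cross ratio. The hypothesis splits the four points into two ``clusters'': $x_1, y_1$ lie within distance $< |a_1 - a_2|$ of $a_1$, and $x_2, y_2$ lie within distance $< |a_1 - a_2|$ of $a_2$. Same-cluster differences should therefore be small, while cross-cluster differences should equal $|a_1 - a_2|$ exactly.

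For the two denominator factors, I would write $x_1 - y_1 = (x_1 - a_1) - (y_1 - a_1)$. Both summands have absolute value $< |a_1 - a_2|$, so the ultrametric inequality yields $|x_1 - y_1| < |a_1 - a_2|$, and the same reasoning gives $|x_2 - y_2| < |a_1 - a_2|$. For the two numerator factors, I would write
\[
x_1 - y_2 = (x_1 - a_1) + (a_1 - a_2) - (y_2 - a_2).
\]
The middle summand has absolute value $|a_1 - a_2|$ and strictly dominates the other two, so the strict-domination case of the ultrametric inequality forces $|x_1 - y_2| = |a_1 - a_2|$. An identical argument yields $|x_2 - y_1| = |a_1 - a_2|$.

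Plugging these four estimates into the definition gives
\[
(x_1, x_2; y_1, y_2) = \frac{|a_1 - a_2|^{2}}{|x_1 - y_1|\,|x_2 - y_2|},
\]
and since both factors in the denominator are strictly less than $|a_1 - a_2|$, this quotient exceeds $1$. There is essentially no obstacle beyond recalling that when one summand in an ultrametric sum strictly dominates the others in absolute value, the triangle inequality becomes an equality; this is precisely what converts the strict inequalities in the hypothesis into a strict inequality in the conclusion.
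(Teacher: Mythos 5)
Your proof is correct and is essentially the paper's own argument: both split the four points into two clusters and apply the ultrametric inequality, obtaining $|x_1-y_2|=|x_2-y_1|=|a_1-a_2|$ by strict domination and $|x_1-y_1|,|x_2-y_2|<|a_1-a_2|$, whence the cross ratio exceeds $1$. The only difference is cosmetic: the paper first translates so that $a_1=0$, whereas you keep both centers and decompose each difference explicitly.
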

 \begin{proof}
   After a translation, we may assume that $a_1 = 0$.  Then $|x_1|,
   |y_1| < |a_2|$ and $|x_2|, |y_2| = |a_2|$.  Thus, we have
  \[  (x_1, x_2; y_1, y_2) = \frac{|a_2| |a_2|}{|x_1 - y_1| |x_2 - y_2|} > 1. \]
\end{proof}

\begin{remark}
  The cross ratio of $x_1, x_2, y_1, y_2$ is often defined without
  taking absolute values, i.e. as 
  \[ \frac{(x_1 - y_2) (x_2 - y_1)}{(x_1 - y_1)
      (x_2 - y_2)}.\]
  The advantage of the definition we use is that it extends to points
  in Berkovich space (see \cite{FRL}).  While we do not use this
  extension, it can be used to give a quick proof of our Proposition
  \ref{badcrossratio}.  We give a slightly longer proof that we think
  may be more accessible for some readers.  
  \end{remark}


\section{Non-Isotriviality of inverse images}\label{noniso-sec}
In this section, we will prove the following theorem.

\begin{thm}\label{noniso-inverse}
Let $\varphi \in K(z)$ have $\deg \varphi > 1$.  Suppose that
$\varphi$ is not isotrivial and that $\beta$ is not exceptional for
$\varphi$.  Then for all sufficiently large $n$ the set
$\varphi^{-n}(\beta)$ is not an isotrivial set.   
\end{thm}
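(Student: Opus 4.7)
The plan is to prove the contrapositive by exploiting the $\PGL_2$-invariance of the cross ratio. If $\varphi^{-n}(\beta)$ were isotrivial, then there would exist $\sigma\in\PGL_2(\Kbar)$ with $\sigma(\varphi^{-n}(\beta))\subseteq\Fpbar\cup\infty$; for any four distinct points $x_1,x_2,y_1,y_2\in\varphi^{-n}(\beta)$ and any non-Archimedean place $v$, one has
\[
(x_1,x_2;y_1,y_2) \;=\; (\sigma(x_1),\sigma(x_2);\sigma(y_1),\sigma(y_2)),
\]
and this latter cross ratio is the $v$-adic absolute value of a nonzero element of $\Fpbar$, hence equals $1$. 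So it suffices to exhibit a place $v$ of $K$ and four points of $\varphi^{-n}(\beta)$ whose $v$-adic cross ratio is strictly greater than $1$, for all $n$ sufficiently large.

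The first step is to locate $v$. Since $\varphi$ is non-isotrivial of degree $\geq 2$, a theorem of Baker (\cite{Baker1}) guarantees the existence of at least one place $v$ of $K$ at which $\varphi$ has genuinely bad reduction; fix such a $v$ once and for all.

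The second and main step is to produce, at this $v$, a configuration of preimages to which Lemma~\ref{two} applies. I would choose two distinct classical points $a_1,a_2\in\bP^1(\C_v)$ that lie in the support of the equilibrium measure $\mu_\varphi$ of $\varphi$ at $v$, and pick $v$-adic radii $r_1,r_2$ with $r_i<|a_1-a_2|_v$; my goal is to show that for all large $n$, each classical disk $D(a_i,r_i)$ contains at least two points of $\varphi^{-n}(\beta)$. Granted this, Lemma~\ref{two} immediately gives a cross ratio $>1$ and completes the proof. The engine behind this step is the theory of Favre and Rivera-Letelier \cite{FRL}: at a place of genuinely bad reduction the Berkovich Julia set $J_v(\varphi)$ is not reduced to a single type-II point, classical repelling periodic points are dense in $J_v(\varphi)$, and because $\beta$ is not exceptional the measures $d^{-n}(\varphi^n)^\ast\delta_\beta$ equidistribute weakly to $\mu_\varphi$, whose support is $J_v(\varphi)$. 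Choosing $a_1,a_2$ to be distinct classical repelling periodic points places them in the interior of $\mathrm{supp}\,\mu_\varphi$, so each Berkovich open neighborhood of $a_i$ has positive $\mu_\varphi$-mass, and equidistribution then forces arbitrarily many classical preimages into each $D(a_i,r_i)$ for large $n$.

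The main obstacle I expect is the passage from the Berkovich-analytic equidistribution statement to the concrete statement that classical preimages accumulate on classical repelling periodic points inside classical disks of prescribed radius; in particular one must check that the weight carried by type-II or type-III points in the weak limit cannot swamp the classical contribution near $a_1$ and $a_2$. This is exactly the content I would package as Proposition~\ref{badcrossratio}, following the argument attributed to Rivera-Letelier in the acknowledgments; the rest of the theorem then reduces to the cross-ratio bookkeeping above together with Baker's theorem on reduction.
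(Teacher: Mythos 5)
Your overall architecture matches the paper's exactly: use $\PGL_2$-invariance of the $v$-adic cross ratio to reduce to producing four preimages with cross ratio $>1$, invoke Baker's theorem to find a place of genuinely bad reduction, and use Favre--Rivera-Letelier equidistribution (valid since $\beta$ is non-exceptional) to land preimages in two small disjoint disks and then apply Lemma~\ref{two}. Up to that point you have reconstructed Proposition~\ref{badcrossratio} and its consequences correctly.

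There is, however, a genuine gap at the very first move of your main step: you propose to pick $a_1,a_2$ to be \emph{classical} (Type~I) repelling periodic points in the support of $\rho_\varphi$. In the non-Archimedean Berkovich setting there is no guarantee that the Julia set at a place of genuinely bad reduction contains any Type~I points at all --- the support of $\rho_\varphi$ may consist entirely of Type~II, III, or IV points. The paper never assumes classical points exist in the support. Instead, it observes that non-atomicity forces the support to contain at least four points of \emph{some} common Berkovich type, and then splits into cases: if two of those are not concentric one can still cut out disjoint open disks whose classical parts catch preimages (Fact~\ref{I}), whereas if all the available points of that type are concentric Type~II/III points one instead chooses four of them along a single arc and uses the nested-disk cross-ratio Lemma~\ref{con} rather than the two-cluster Lemma~\ref{two}. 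Your proposal omits both the passage to arbitrary Berkovich types and the concentric case entirely; also, the claim that classical repelling periodic points are dense in $J_v(\varphi)$ is not something you can cite from \cite{FRL} in this generality. On the other hand, the secondary worry you flag --- that Type~II/III mass might ``swamp'' the classical contribution --- is not actually an issue, since $\varphi^{-n}(\beta)$ consists of classical points by definition and the equidistribution statement is precisely about the discrete measures supported on them.
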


We will derive Theorem \ref{noniso-inverse} from the following
proposition.  

\begin{prop}\label{badcrossratio}
  Suppose $\varphi\in K(z)$ has genuinely bad reduction at the prime
  $\fp$.  Let $| \cdot |$ be an extension of $|\cdot|_{\fp}$ to 
  $\C_\fp$.  Then for any non-exceptional $\alpha\in K$, and for all
  sufficiently large $n$, there are elements
  $z_1,z_2,z_3,z_4\in\varphi^{-n}(\alpha)$ such that
$$(z_1,z_2;z_3,z_4) > 1.$$
\end{prop}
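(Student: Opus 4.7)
The plan is to combine the non-triviality of the Berkovich Julia set $J_\varphi$ at a prime of genuinely bad reduction with the non-archimedean equidistribution of preimages of non-exceptional points, in order to produce four preimages of $\alpha$ under $\varphi^n$ that cluster into two ultrametrically-separated pairs; Lemma \ref{two} then yields the cross ratio inequality.

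First I would invoke the results of Favre and Rivera-Letelier \cite{FRL}: genuinely bad reduction at $\fp$ is equivalent to $J_\varphi \subset \bP^1_{\mathrm{Berk},\fp}$ containing more than one Berkovich point, and in that case $J_\varphi$ is perfect, with the classical repelling periodic points of $\varphi$ dense in it. Choose two distinct classical points $a_1, a_2 \in \bP^1(\C_\fp) \cap J_\varphi$ (after a $\PGL_2$ coordinate change we may take both finite) and set $r := |a_1 - a_2|_\fp > 0$. By perfectness, also pick classical Julia points $a_1', a_2'$ with $0 < |a_1 - a_1'|_\fp < r/2$ and $0 < |a_2 - a_2'|_\fp < r/2$, and take pairwise disjoint Berkovich open neighborhoods $V_1 \ni a_1$, $V_3 \ni a_1'$ inside $\{x : |x - a_1|_\fp < r/2\}$ and $V_2 \ni a_2$, $V_4 \ni a_2'$ inside $\{x : |x - a_2|_\fp < r/2\}$. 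Each $V_i$ has positive $\mu_\varphi$-measure, where $\mu_\varphi$ is the equilibrium measure of $\varphi$, whose support equals $J_\varphi$.

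Next, by the Favre--Rivera-Letelier equidistribution theorem \cite{FRL}, the measures $\mu_n := d^{-n}(\varphi^n)^*\delta_\alpha$ converge weakly to $\mu_\varphi$ on $\bP^1_{\mathrm{Berk},\fp}$ (here the hypothesis that $\alpha$ is non-exceptional is essential). Lower semicontinuity of weak limits on open sets gives $\liminf_n \mu_n(V_i) \geq \mu_\varphi(V_i) > 0$ for each $i$, so for all $n$ sufficiently large one can pick $z_i \in \varphi^{-n}(\alpha) \cap V_i$; the $z_i$ are distinct since the $V_i$ are pairwise disjoint. By construction $|z_1 - a_1|_\fp, |z_3 - a_1|_\fp < r/2 < r = |a_1 - a_2|_\fp$ and $|z_2 - a_2|_\fp, |z_4 - a_2|_\fp < r/2 < |a_1 - a_2|_\fp$, so Lemma \ref{two} applies with $x_1 = z_1, y_1 = z_3, x_2 = z_2, y_2 = z_4$ and yields $(z_1, z_2; z_3, z_4) > 1$.

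The main obstacle is the careful appeal to \cite{FRL}: one needs both the characterization of genuine bad reduction via non-triviality (and consequent perfectness) of $J_\varphi$ together with density of classical repelling periodic points, and the non-archimedean equidistribution theorem for iterated preimages of non-exceptional points. Once these ingredients are in hand, the clustering argument through Lemma \ref{two} is essentially formal.
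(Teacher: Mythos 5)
Your approach differs from the paper's and contains a genuine gap. You assert that at a prime of genuinely bad reduction the classical (Type I) repelling periodic points of $\varphi$ are dense in the Berkovich Julia set $J_\varphi$. This is not a theorem that \cite{FRL} gives you unconditionally: over a non-archimedean field the Berkovich Julia set of a map with genuinely bad reduction can contain no classical points at all (its support can consist entirely of Type II, III, or IV points). What \cite{FRL} (Th\'eor\`eme~E) gives is only that the equilibrium measure $\rho_\varphi$ is \emph{non-atomic}, hence that $\mathrm{supp}\,\rho_\varphi$ is infinite and, by pigeonhole, contains infinitely many points of a single Berkovich type $-$ but that type need not be Type I. Your construction selects classical $a_1,a_2 \in J_\varphi \cap \bP^1(\C_\fp)$ and then uses ultrametric balls around them; if no such classical points exist, the argument never gets started.

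Moreover, even granting non-classical points, your "two separated clusters" picture (Lemma \ref{two}) cannot be the whole story: if the infinitely many points of one type in $\mathrm{supp}\,\rho_\varphi$ are all \emph{concentric} Type II or III points (i.e.\ lie on a single Berkovich path through a common center), there are no two among them at finite, equal ultrametric distance from two disjoint discs, and Lemma \ref{two} does not apply. This is precisely why the paper's proof splits into two cases: when $\mathrm{supp}\,\rho_\varphi$ contains two non-concentric points of the same type, an argument like yours via Lemma \ref{two} works (using Fact \ref{I} to manufacture disjoint discs); but when only concentric Type II/III points are available, the paper instead produces four preimages with strictly nested moduli $|z_1|<|z_2|<|z_3|<|z_4|$ near four concentric discs (via Fact \ref{II}) and then invokes the different inequality Lemma \ref{con}. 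To repair your proof you would need to either justify the existence of classical Julia points in this specific setting with a precise citation, or add the concentric-points case via the $\mathrm{Fact}$ \ref{II} / Lemma \ref{con} route as the paper does.
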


\begin{proof}
We work over the non-Archimedean complete field $\C_\fp$, and consider
the dynamical system induced by $\varphi$ on the Berkovich projective
line $\P^{1,an}$.  We will use some basic facts about the topology of
the Berkovich projective line, including the classification of points
as Type I, II, III, or IV; see \cite{BR-Book} or \cite{BenedettoBook} for a detailed
description of the topology of the Berkovich projective line.

By \cite[Th\'eor\`eme E]{FRL} (see also \cite[Theorem 8.15]{BenedettoBook}), bad reduction implies that the 
equilibrium measure $\rho_\varphi$ is non-atomic. 
Thus, there are four or more points all of the same type (I, II, III, or IV) in the support of $\rho_\varphi$.

Since $\rho_\varphi$ is non-atomic and the inverse images of a
non-exceptional point equidistribute we have the following fact.

\begin{fact}\label{fact}
For any $\gamma$ in the support
of $\rho_\varphi$, any open subset $U$ containing $\gamma$, an any positive integer
$m$, there is an $N$ such that $U \cap \varphi^{-n}(\beta)$ contains
$m$ or more points for all $n \geq N$.  
\end{fact}

We also have the following basic facts about the topology of
$\P^{1,an}$.

\begin{fact}\label{II}
  Let $\xi(a,r)$, where $a \in K$ and $r > 0$, be a point of Type II
  or Type III corresponding to the disc $\{x\in K \; | \; |x-a| \leq r \}$.
  Then for any $\epsilon  > 0$, there is an open set $U\subset \P^{1,an}$ with $\xi(a,r) \in U$
  such that every point $x$ of Type I in $U$ satisfies $r  - \epsilon
  < |x-a| < r + \epsilon$.  
 \end{fact}

\begin{fact}\label{I}
Let $a_1$ and $a_2$ be any two points of the same type in $\P^{1,an}$, 
which are not concentric Type II or III points. Then there exist open
sets $U_1$ and $U_2$ with $a_1 \in U_1$ and $a_2 \in U_2$ such that
$U_1 \cap \bP^{1}(\C_\fp)$ and $Y_2 \cap \bP^{1}(\C_\fp)$ are disjoint open discs.
\end{fact}

\begin{proof}
  Since $a_1$ and $a_2$ are not concentric, $a_1\wedge a_2$, the
  unique point
  $[a_1, \infty] \cap [a_2,\infty] = [a_1 \wedge a_2, \infty]$, is not
  equal to $a_1$ or $a_2$ (see \cite{FRL}). Now let $D_i$ be the open
  disc corresponding to any Type II point in the open interval
  $(a_i,a_1\wedge a_2)$, for $i=1,2$.  Then there are open sets $U_i$
  such that $U_i \cap \bP^{1}(\C_\fp) = D_i$.
\end{proof}



Suppose that $\rho_\varphi$ contains two non-concentric points $z_1, z_2$ 
of the same type.  Then, by Facts \ref{fact} and \ref{I}, for all sufficiently large $n$
there must be open discs $D(a_1, r_1)$ and $D(a_2, r_2)$ with
$|a_1 - a_2| > \max \{r_1, r_2\}$ and points
$x_1, x_2, y_1, y_2 \in \varphi^{-n}(\beta)$ with
$x_1, y_1 \in D(a_1, r_1)$ and $x_2, y_2 \in D(a_2, r_2)$.  By Fact
\ref{two}, we have
\[ (x_1, x_2; y_1, y_2) > 1, \]
proving the proposition.

Now suppose that $\rho_\varphi$ contains four concentric points of Type II or
Type III, corresponding to
closed discs ${\overline D}(a, r_i)$, for $i= 1, 2, 3, 4$, for some
fixed $a$.  We suppose that $r_1 < r_2 < r_3 < r_4$, and after an
affine change of coordinates, we may suppose that $a = 0$.  By Facts
\ref{fact} and \ref{II}, for any $\epsilon > 0$, there must be an $n$ such that
$\varphi^{-n}(\beta)$ contains points $z_1, z_2, z_3, z_4$ with
$|z_i|$ within $\epsilon$ of $r_i$ for each $i$.  Choosing $\epsilon$
appropriately, we will then have $|z_1| < |z_2| < |z_3| < |z_4|$.
Then $(z_1, z_3; z_2, z_4) > 1$ by Lemma \ref{con}.

\end{proof}


\begin{proof}[Proof of Theorem \ref{noniso-inverse}]
By \cite[Theorem 1.9]{Baker1}, since $\varphi$ is non-isotrivial, it
must have genuine bad reduction over some prime $\fp$. Then we may
apply Proposition \ref{badcrossratio} to obtain four points in $\varphi^{-n}(\beta)$
with cross ratio greater than one for any sufficiently large
$n$. Since the cross ratio of four points in $\Fpbar \cup \infty$ is
always 1 and  the cross ratio is invariant under change of
coordinate, we see then that $\varphi^{-n}(\beta)$ is a non-isotrivial
set for all sufficiently large $n$.  
\end{proof}

\section{Proof of Theorem \ref{main}}\label{proof-main}
We will use the following  theorem due to Wang \cite[Theorem in
$\bP^1(K)$, Page 337]{Wang1} and Voloch \cite{Voloch}.  

\begin{thm}\label{W} Let $D$ be an effective divisor on $\bP^1$ that is defined
  over $K$.  If the points in $Supp\, D$ form a non-isotrivial set, then
  the set of points in $\bP^1(K)$ that are $S$-integral relative to
  $D$ is finite.
\end{thm}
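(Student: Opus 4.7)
The plan is to reduce Theorem \ref{W} to the function field analog of the Mordell conjecture via a suitable cyclic cover of $\bP^1$, broadly following Wang and Voloch. Since $S$-integrality depends only on the reduced support of a divisor, we may assume $D = \sum_{i=1}^{r} \gamma_i$ with the $\gamma_i \in \bP^1(\Kbar)$ distinct. Any set of three or fewer points of $\bP^1(\Kbar)$ is isotrivial (three points may be moved to $\{0,1,\infty\}$ by an element of $\PGL_2$), so the hypothesis forces $r \geq 4$.

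Next, fix a prime $\ell \neq p$, small relative to $r$, and consider the cyclic cover $\pi : C \to \bP^1$ given affinely by $y^\ell = F(x)$, where $F \in K[x]$ has the finite points of $\Supp D$ as its roots (with a harmless adjustment if $\infty \in \Supp D$, and with $\ell \mid \deg F$ to control ramification at $\infty$). Then $\pi$ is Galois of degree $\ell$, ramified exactly over $\Supp D$, and Riemann-Hurwitz gives $g(C) \geq 2$. Assuming for contradiction that there are infinitely many $\alpha \in \bP^1(K)$ that are $S$-integral relative to $D$, a standard functoriality argument (analogous to Lemma \ref{funct}) shows, after enlarging $S$ to contain the primes of bad reduction of $\pi$ and those where $\ell$ is not a unit, that each preimage $\pi^{-1}(\alpha) \subset C(\Kbar)$ is $S'$-integral on a fixed integral model of $C$. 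Passing to a finite extension $K''/K$ of degree dividing $\ell!$, we obtain infinitely many $K''$-rational points of $C$.

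The proof concludes with two ingredients: first, the geometric fact that $C$ is a non-isotrivial curve in the sense of Definition \ref{iso3}; and second, the function field Mordell conjecture for non-isotrivial curves of genus $\geq 2$ in positive characteristic, proved by Samuel (after Grauert and Manin in characteristic zero), which forces $C(K'')$ to be finite and yields the contradiction. The main obstacle is the non-isotriviality of $C$: one must transfer non-isotriviality from the zero-dimensional branch locus $\Supp D$ to the one-dimensional curve $C$. The argument is descent-theoretic---for $\ell$ small relative to $r$, an adjunction-type uniqueness of the degree-$\ell$ map $C \to \bP^1$ forces any putative isotriviality of $C$ to descend to an isotriviality of the branch locus. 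This transfer is delicate in characteristic $p$ because coordinate changes on $\bP^1$ may be inseparable and the cover must be tame, which is why we insist $\ell \neq p$; this is essentially the content of the paper's later Proposition \ref{non-one}.
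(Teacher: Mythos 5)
The paper gives no proof of Theorem~\ref{W}; it states the result and credits it to Wang~\cite{Wang1} and Voloch~\cite{Voloch}, then uses it as a black box in Corollary~\ref{easy-cor}. So there is nothing in the paper for your argument to match against. That said, your sketch does capture the flavor of the Wang--Voloch approach (cyclic cover of $\bP^1$ branched over $\Supp D$, transfer of non-isotriviality from the branch set to the cover, finiteness from the positive-characteristic Mordell conjecture of Samuel), and the non-isotriviality-transfer ingredient is precisely what the authors develop independently in Section~\ref{curves}---Lemma~\ref{lem:mapunique}, Theorem~\ref{converse}, and Corollary~\ref{non-one}---for the different purpose of the Zsigmondy analysis. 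So the relationship between your sketch and the paper is not ``same proof'' or ``different proof'' but rather: you are supplying a proof where the paper cites one, and your proof reuses a tool the paper builds for something else.

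Your argument, however, has real gaps as written. The most serious is the small-support case. Corollary~\ref{non-one} (which your descent-uniqueness step amounts to) requires $\ell - 1 < \deg F/2 - 1$, i.e. $\deg F > 2\ell$, while your extra constraint $\ell \mid \deg F$ forces $\ell$ to be at least the smallest prime factor of $\deg F$. Concretely, if $|\Supp D| = 4$ then $y^\ell = F(x)$ has genus~$1$ for every admissible choice ($\ell = 2$ with $\deg F \in \{3,4\}$, or $\ell = 3$ with $\deg F = 3$), so Samuel's theorem does not apply and the Legendre curve $y^2 = x(x-1)(x-\lambda)$ with $\lambda \notin \Fpbar$ falls entirely outside your scheme; the same happens for $|\Supp D| = 5$ when $\infty$ is in the support. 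Wang and Voloch must and do handle these low-genus situations with separate arguments, and any complete proof needs to as well. A second gap is the step from infinitely many $S$-integral $\alpha \in \bP^1(K)$ to infinitely many $K''$-rational points on $C$ for one fixed $K''$. The field $K(\pi^{-1}(\alpha))$ varies with $\alpha$, and to pin down a single $K''$ you need a Chevalley--Weil-type argument: because $\alpha$ is $S$-integral relative to $D$, $F(\alpha)$ is an $S$-unit (after a finite enlargement of $S$), so $K(\sqrt[\ell]{F(\alpha)})/K$ is ramified only over $S$ and of degree dividing $\ell$, and there are only finitely many such extensions in the function-field setting. You assert the conclusion but do not supply this step, and it is not automatic.
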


The corollary below follows easily.  
\begin{cor}\label{easy-cor}
  Let $\varphi \in K(z)$, let $\beta \in K$.  Suppose that there is
  some $i$ such that $\varphi^{-i}(\beta)$ is not an isotrivial set.
  Then for any $\alpha \in K$, the forward orbit $O^+_\varphi(\alpha)$
  contains only finitely many points that are $S$-integral relative to
  $\beta$.
  \end{cor}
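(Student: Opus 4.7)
The plan is to reduce the statement to a direct application of Theorem \ref{W} via the functoriality of $S$-integrality (Lemma \ref{funct}). First, I would enlarge $S$ to contain every place of bad reduction of $\varphi$; this changes nothing in the conclusion since enlarging $S$ only makes more points $S$-integral, and only finitely many places are being added. Since the primes of bad reduction of $\varphi^j$ are contained in those of $\varphi$, the enlarged $S$ also contains the primes of bad reduction of $\varphi^j$ for every $j \geq 1$.

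Next, I would iterate Lemma \ref{funct}. Suppose $\varphi^n(\alpha)$ is $S$-integral relative to $\beta$ for some $n \geq i$. Writing $\varphi^n(\alpha) = \varphi(\varphi^{n-1}(\alpha))$ and applying the lemma, we see that $\varphi^{n-1}(\alpha)$ is $S$-integral relative to the pullback divisor $\varphi^*(\beta)$, whose support is $\varphi^{-1}(\beta)$. Iterating $i$ times, we conclude that $\varphi^{n-i}(\alpha)$ is $S$-integral relative to the divisor $(\varphi^i)^*(\beta)$, whose support is precisely $\varphi^{-i}(\beta)$.

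Now I would invoke the hypothesis that $\varphi^{-i}(\beta)$ is a non-isotrivial set. By Theorem \ref{W}, the set $T \subset \bP^1(K)$ of points that are $S$-integral relative to $(\varphi^i)^*(\beta)$ is finite. Therefore, every $n \geq i$ for which $\varphi^n(\alpha)$ is $S$-integral relative to $\beta$ satisfies $\varphi^{n-i}(\alpha) \in T$. Applying $\varphi^i$, this forces $\varphi^n(\alpha) \in \varphi^i(T)$, a finite set. Adjoining the finitely many orbit points $\varphi^n(\alpha)$ with $n < i$, we conclude that $O^+_\varphi(\alpha)$ contains only finitely many points that are $S$-integral relative to $\beta$, as required.

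There is no real obstacle here; the corollary is essentially a bookkeeping exercise combining Theorem \ref{W} with the pullback property of $S$-integrality. The only minor care needed is in the enlargement of $S$ to handle reduction for the iterates, and in observing that $\Supp (\varphi^i)^*(\beta) = \varphi^{-i}(\beta)$ so that the non-isotriviality hypothesis applies directly to the divisor appearing in Theorem \ref{W}.
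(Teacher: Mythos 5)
Your proof is correct and follows essentially the same route as the paper: enlarge $S$ to cover the places of bad reduction, use Lemma \ref{funct} to transfer $S$-integrality from $\beta$ back to the pullback divisor $(\varphi^i)^*(\beta)$ (whose support is $\varphi^{-i}(\beta)$), and then apply Theorem \ref{W}. The paper applies Lemma \ref{funct} in a single step with $\varphi^i$ in place of $\varphi$, whereas you iterate it $i$ times, but this is an immaterial difference.
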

  \begin{proof}
    We may extend $S$ to contain all the primes of bad reduction for $\varphi$.
  The set of iterates $\varphi^{n-i}(\alpha)$ that are
  $S$-integral relative to $(\varphi^i)^*(\beta)$ is finite by Theorem
  \ref{W}, so by Lemma \ref{funct}, the set of points
  $\varphi^n(\alpha)$ that are $S$-integral relative to $\beta$ must
  be finite.   
\end{proof}

The proof of Theorem \ref{main} is now easy.  

\begin{proof}[Proof of Theorem \ref{main}]
  By Theorem \ref{noniso-inverse}, there is some $i$ such that
  $\varphi^{-i}(\beta)$ is not an isotrivial set.  Applying Corollary
  \ref{easy-cor} then gives the desired conclusion.  
  \end{proof}





  \section{Non-isotriviality of certain curves}\label{curves}

  Let $\pi: C \lra \bP^1$ be a separable nonconstant morphism defined
  over $K$.  We define the {\bf ramification locus} of $\pi$ to be the
  support of $\pi(R_\pi)$, where $R_\pi$ is the ramification divisor
  of $\pi$.  If the ramification locus of $\pi$ is an isotrivial set,
  then it follows from descent theory (see \cite{SGA1}, for example)
  that $C$ must be isotrivial.  On the other hand, given any finite
  subset $\cU$ of $\bP^1$, one can use interpolation to construct a
  nonconstant separable morphism $f: \bP^1 \lra \bP^1$ such that that
  the ramification locus of $f$ contains $\cU$; thus, there are
  isotrivial curves that admit nonconstant separable morphisms
  $\pi: C \lra \bP^1$ such that the ramification locus of $\pi$ is a
  non-isotrivial set.  We can show, however, that if the degree of
  $\pi: C \lra \bP^1$ is a prime $\ell \not= p$ that is small relative
  to the genus of $C$ and the ramification locus of $\pi$ is a
  non-isotrivial set, then $C$ must indeed be a non-isotrivial curve.
  This enables us to prove Theorem \ref{non-curve}, which gives rise
  to diophantine estimates used in the proofs of Theorems \ref{Z1} and
  \ref{Z2}.  
  The technique here is similar to that of \cite{HJ}. We begin with a
  lemma about uniqueness of low prime degree maps on curves of high genus.

\begin{lem}{\label{lem:mapunique}}
Let $C$ be a curve of genus $g$ over $K$ and let $\ell$ be a prime
such that $(\ell - 1)^2 < g$ and $\ell \not= p$.  Suppose there is morphism $\theta_1: C \lra
\bP^1$ of degree $\ell$.  Then for any morphism $\theta_2: C \lra \bP^1$
of degree $\ell$, there is an automorphism
$\lambda: \bP^1 \lra \bP^1$ such that $\theta_2 = \lambda \circ \theta_1$.   
\end{lem}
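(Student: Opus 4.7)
The plan is to argue by contradiction using the Castelnuovo--Severi inequality. Suppose $\theta_2 \neq \lambda \circ \theta_1$ for every $\lambda \in \Aut(\bP^1)$; I will derive a contradiction from this.

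First, I would observe that both $\theta_1$ and $\theta_2$ are separable. Indeed, the inseparable degree of each $\theta_i$ divides $\deg \theta_i = \ell$ and is a power of $p$; since $\ell$ is a prime distinct from $p$, the inseparable degree must equal $1$. This is precisely the role of the hypothesis $\ell \neq p$: without it, the Castelnuovo--Severi inequality can fail in positive characteristic.

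Next, I would study the compositum of function fields
$$L \;=\; \theta_1^{*}\bigl(K(\bP^1)\bigr)\cdot\theta_2^{*}\bigl(K(\bP^1)\bigr)\;\subseteq\;K(C).$$
If $L \subsetneq K(C)$, then $L$ is the function field of a smooth projective curve $\tilde C$, and the two maps factor through a non-trivial separable cover $h\colon C \to \tilde C$ as $\theta_i = \pi_i \circ h$ with $\deg h \cdot \deg \pi_i = \ell$ and $\deg h > 1$. Since $\ell$ is prime, this forces $\deg h = \ell$ and $\deg \pi_i = 1$, so $\tilde C \cong \bP^1$ and $\lambda := \pi_2 \circ \pi_1^{-1}$ is an automorphism of $\bP^1$ satisfying $\theta_2 = \lambda \circ \theta_1$, contradicting our standing assumption.

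Therefore we must have $L = K(C)$, and the Castelnuovo--Severi inequality (which holds for separable morphisms in any characteristic) yields
$$g(C)\;\leq\;\ell\cdot g(\bP^1)+\ell\cdot g(\bP^1)+(\ell-1)(\ell-1)\;=\;(\ell-1)^2,$$
contradicting the hypothesis $(\ell-1)^2 < g$. The only step requiring real care is verifying the hypotheses of Castelnuovo--Severi in positive characteristic, and this reduces exactly to the separability check noted above; the rest is a clean dichotomy on function-field compositums combined with primality of $\ell$.
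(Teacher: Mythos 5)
Your argument is correct and is, in substance, the same as the paper's: the image-curve-and-adjunction step in the paper is precisely the standard proof of the Castelnuovo--Severi (Castelnuovo) inequality, which you cite as a black box, and the ``$L \subsetneq K(C)$'' branch of your dichotomy is the function-field translation of the paper's case where $(\theta_1,\theta_2)$ is non-injective, with the same use of primality of $\ell$ to force the quotient to be all of $\bP^1$. One small remark: the version of Castelnuovo's inequality in Stichtenoth (Theorem 3.11.3) requires only that $K(C)$ be the compositum of the two subfields, with no separability hypothesis, so your separability check --- while true because $\ell \neq p$ --- is not actually needed to invoke the inequality; the hypothesis $\ell \neq p$ plays its real role later in the paper, in Theorem \ref{converse} and Corollary \ref{non-one}.
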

  \begin{proof} Suppose that $g>(\ell-1)^2$ and that $\theta_2:C\rightarrow\mathbb{P}^1$ is  another map of degree $\ell$ on $C$. Then we have a map $(\theta_1,\theta_2):C\rightarrow\mathbb{P}^1\times\mathbb{P}^1$; let $\widetilde{C}$ be the image of this map. If $(\theta_1,\theta_2)$ is injective, then $\widetilde{C}$ also has genus $g$; see \cite[Theorem II.8.19]{Hartshorne}. On the other hand, $\widetilde{C}$ is a curve of bidegree $(d_1,d_2)$ in $\mathbb{P}^1\times\mathbb{P}^1$ for some $d_i\leq\ell$. Hence, the Adjunction Formula implies that $g=(d_1-1)(d_2-1)\leq(\ell-1)^2$, a contradiction; see \cite[Example V.1.5.2]{Hartshorne}. Therefore, $(\theta_1,\theta_2)$ is not an injection. However, we have a commutative diagram 
\begin{equation*}
\begin{tikzcd}[column sep=3em, row sep=3em]
&  \arrow[dl,swap,"\theta_1"] C \arrow[d,"(\theta_1\text{,}\theta_2)" description] \arrow[dr,"\theta_2"] & \\
  \mathbb{P}^1& \arrow[l,swap,"\pi_1"]\widetilde{C}\arrow[r,"\pi_2"] & \mathbb{P}^1
\end{tikzcd} 
\end{equation*}
where the $\pi_i$ are the restrictions of the natural projections $\pi_i:\mathbb{P}^1\times\mathbb{P}^1\rightarrow\mathbb{P}^1$ to $\widetilde{C}$. Therefore, \[\deg(\pi_1)\cdot\deg((\theta_1,\theta_2))=\deg(\theta_1)=\ell=\deg(\theta_2)=\deg(\pi_2)\cdot\deg((\theta_1,\theta_2)).\] 
However, $(\theta_1,\theta_2)$ is not injective, so that $\deg((\theta_1,\theta_2))>1$. Therefore, $\deg((\theta_1,\theta_2))=\ell$, since $\ell$ is prime. Hence, $\deg(\pi_1)=1=\deg(\pi_2)$, and both $\pi_i$ are isomorphisms \cite[Corollary 2.4.1]{SilvEll}. In particular, $\pi_2\circ\pi_1^{-1}=\lambda$ is a linear fractional transformation, and $\theta_2=\lambda\circ \theta_1$ as claimed.     
\end{proof}  

\begin{thm}\label{converse}
  Let $C$ be a curve of genus $g$ over $K$ and let $\ell$ be a prime
  such that $(\ell - 1)^2 < g$ and $\ell \not= p$.  Suppose there is
  morphism $\theta: C \lra \bP^1$ of degree $\ell$ such that the
  ramification locus of $\theta$ is a non-isotrivial set.  Then $C$
  is a non-isotrivial curve.
\end{thm}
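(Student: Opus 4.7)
The plan is to argue by contradiction. Suppose $C$ is isotrivial. Then there exist a finite extension $K'/K$, a subfield $k' \subset \Fpbar$ that is a finite extension of $K \cap \Fpbar$, and a curve $C'/k'$ with an isomorphism $\phi \colon C \times_K K' \xrightarrow{\sim} C' \times_{k'} K'$. Enlarging $K'$ if necessary, I may assume $k' \subset K'$. Composing $\phi^{-1}$ with the base change of $\theta$ then gives a degree-$\ell$ morphism $\theta' \colon C' \times_{k'} K' \to \bP^1_{K'}$, and $g(C') = g$ since the geometric genus of a smooth projective curve is invariant under base change.

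The crucial step is to produce a degree-$\ell$ morphism on $C'$ defined over $\overline{k'}$ itself. Consider the Grothendieck Hom-scheme $H := \underline{\Hom}_{k'}(C', \bP^1_{k'})$, and let $H_\ell \subset H$ denote the locus parameterizing morphisms of degree $\ell$; this is of finite type over $k'$. The morphism $\theta'$ supplies an $\overline{K'}$-point of $H_\ell$, so $H_\ell$ is nonempty. Since $\overline{k'}$ is algebraically closed, the Nullstellensatz then yields a $\overline{k'}$-point $\psi \colon C' \times_{k'} \overline{k'} \to \bP^1_{\overline{k'}}$ of degree $\ell$.

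With $\psi$ in hand, I apply Lemma \ref{lem:mapunique} to the curve $C' \times_{k'} \overline{K'}$ of genus $g$ with $(\ell - 1)^2 < g$: both $\theta'_{\overline{K'}}$ and $\psi_{\overline{K'}}$ are degree-$\ell$ morphisms to $\bP^1_{\overline{K'}}$, so there exists $\lambda \in \PGL_2(\overline{K'})$ with $\theta'_{\overline{K'}} = \lambda \circ \psi_{\overline{K'}}$. The ramification locus of $\psi$, being defined over $\overline{k'}$, is a subset of $\bP^1(\overline{k'}) \subset \Fpbar \cup \{\infty\}$. On the other hand, the ramification locus of $\theta$ coincides (via $\phi$) with that of $\theta'_{\overline{K'}}$, which by the displayed relation equals $\lambda$ applied to the ramification locus of $\psi$. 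Consequently, $\lambda^{-1} \in \PGL_2(\overline{K'}) = \PGL_2(\Kbar)$ maps the ramification locus of $\theta$ into $\Fpbar \cup \{\infty\}$, contradicting the hypothesis that this locus forms a non-isotrivial set.

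The main obstacle I anticipate is the descent step in the second paragraph: one must invoke representability and finite-type-ness of the degree-$\ell$ stratum of the Hom-scheme between projective $k'$-schemes, for which a careful citation (e.g.\ to Grothendieck's FGA) is warranted. A more self-contained alternative would combine Galois descent for $\theta'$ over $\overline{K'} / (K' \cdot \overline{k'})$ with the rigidity from Lemma \ref{lem:mapunique} to trivialize the resulting $\PGL_2$-valued cocycles, but the Hom-scheme approach is cleanest and most direct.
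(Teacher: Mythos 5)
Your proof is correct, but it takes a genuinely different route from the paper's. The paper produces the ``constant'' degree-$\ell$ competitor to $\theta$ by spreading out: it constructs a model $\cC$ for the trivialized family over the base curve $X$ with function field $K'$, extends $\theta$ to a morphism of models away from finitely many fibers, and specializes at a place $t\in X(\overline{k'})$ to obtain $\theta_t\colon \cC_t\to\bP^1_{k(t)}$ defined over the constant field $k(t)\subset\Fpbar$. You instead invoke representability of $\underline{\Hom}_{k'}(C',\bP^1_{k'})$ and finite-type-ness of its degree-$\ell$ stratum, observe that this stratum is nonempty because $\theta'$ gives a $K'$-point, and conclude by the Nullstellensatz that it has a $\overline{k'}$-point $\psi$. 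Both constructions then feed into Lemma \ref{lem:mapunique} and exploit that a linear fractional $\lambda$ carries one ramification locus to the other, contradicting non-isotriviality of the ramification locus of $\theta$. The tradeoff is technology versus transparency: the Hom-scheme argument is shorter and avoids explicitly constructing models and worrying about which fibers are good for specialization, but it quotes heavier machinery (representability of Hom-schemes as open subschemes of Hilbert schemes and the boundedness of the degree-$\ell$ stratum, which one must check corresponds to a single Hilbert polynomial). The paper's specialization argument is more hands-on and self-contained given standard spreading-out. One small point worth making explicit in your write-up: the degree of a morphism $C'\to\bP^1$ is locally constant on the Hom-scheme (it is read off from the Hilbert polynomial of the graph under a fixed projective embedding), so $H_\ell$ is a union of connected components of $\underline{\Hom}$ and is of finite type; you gesture at this but it should be stated, and the FGA citation you mention is indeed the right place to point.
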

\begin{proof}

  Suppose that $C$ is isotrivial; then there are finite extensions
  $K'$ of $K$ and $k'$ of $k$ such that there is a model $\cC$ for
  $C \times_K K'$ over the $k$'-curve $X$ corresponding to the
  function field $K'$ such that for any place
  $t \in X(\overline{k'})$, the curve $\cC_t \times_{k(t)} L$ is
  isomorphic to $C \times_K L$, where $k(t)$ is the field of
  definition of $t$ and $L = K' \cdot k(t)$.  Let $\cP$ be a model for
  $\bP^1$ over $X$.  Then, for all but finitely many places
  $t \in X(\overline{k'})$, the morphism $\theta$ specializes to a
  degree $\ell$ morphism $\theta_t: \cC_t \lra \bP^1_{k(t)}$ defined over $k(t)$.  Let
  $\theta_2 = \theta_t \times_{k(t)} L$.  Since $\theta_2: C \lra \bP^1$
  has degree $\ell$, and $(\ell - 1)^2 < g$, there is a
  $\lambda\in\PGL_2(\overline{K})$ such that
  $\theta_2 = \lambda \circ \theta$, by Lemma \ref{lem:mapunique}.  But
  $\lambda$ must take the ramification locus of $\theta$ to the
  ramification locus of $\theta_2$, which is defined over $k'$.
  Hence, the ramification locus of $\theta$ must be isotrivial.  That
  gives a contradiction.
\end{proof}

\begin{cor}\label{non-one}
  Let $F$ be a polynomial over $K$ without repeated roots such that the roots
  of $F$ form a non-isotrivial set.  Let $\ell$ be a prime number such
  that $\ell \not= p$ and $\ell - 1 < \deg F/2  - 1$.  
  Then the curve $C$ given by $y^\ell = F(x)$ is not isotrivial.  
\end{cor}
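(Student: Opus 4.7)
The plan is to apply Theorem \ref{converse} directly to the projection $\theta \colon C \to \bP^1$ sending $(x,y) \mapsto x$. This is a separable morphism of degree $\ell$ (since $\ell \neq p$), so two things must be verified: (i) the branch locus of $\theta$ is a non-isotrivial subset of $\bP^1$, and (ii) the genus $g$ of $C$ satisfies $(\ell-1)^2 < g$.

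For (i), I would observe that since $F$ has distinct roots and $\ell \neq p$, the cover $y^\ell = F(x)$ is a tame Kummer cover, and above each root $\alpha$ of $F$ there lies a single point of $C$ at which $\theta$ is totally ramified of index $\ell$. Consequently, the branch locus of $\theta$ in $\bP^1$ contains the zero set of $F$ (and possibly $\infty$, depending on whether $\ell \mid \deg F$). Because the roots of $F$ form a non-isotrivial set by hypothesis, and any degree-one map $\sigma \in \Kbar(z)$ sending the branch locus into $\Fpbar \cup \infty$ would in particular do so for the roots of $F$, the branch locus is itself non-isotrivial.

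For (ii), set $d = \deg F$ and apply Riemann--Hurwitz. Since $\ell \neq p$ the cover is tame, and recording only the ramification contributed by the $d$ points above the roots of $F$ (each totally ramified, contributing $\ell - 1$ to the ramification divisor) gives
\[
2g - 2 \;\geq\; -2\ell + d(\ell - 1),
\]
and hence $g \geq (d-2)(\ell-1)/2$. The hypothesis $\ell - 1 < d/2 - 1$ rearranges to $(d-2)/2 > \ell - 1$, so
\[
g \;\geq\; \frac{(d-2)(\ell - 1)}{2} \;>\; (\ell - 1)^2,
\]
as required. Both hypotheses of Theorem \ref{converse} are now in place, and we conclude that $C$ is non-isotrivial.

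I do not expect any substantial obstacle here: the corollary is essentially a packaging of Theorem \ref{converse}. The only mild subtleties are ensuring tameness (so that Riemann--Hurwitz carries no wild contribution) and being honest about the ramification above the roots of $F$, both of which are handled by the assumption $\ell \neq p$ together with the assumption that the roots of $F$ are distinct.
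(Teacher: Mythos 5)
Your proof is correct and follows the same route as the paper: project onto the $x$-coordinate, observe that the ramification locus contains the roots of $F$ and hence is non-isotrivial, and use Riemann--Hurwitz to get $g \geq (\deg F - 2)(\ell-1)/2 > (\ell-1)^2$ so that Theorem~\ref{converse} applies. You have merely spelled out the Riemann--Hurwitz computation and the subset argument for non-isotriviality in a bit more detail than the paper does.
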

\begin{proof}
  Let $\theta: C \lra \bP^1$ be the map coming from projection onto
  the $x$-coordinate.  Then $\deg \theta = \ell$.  Since the genus
  of $C$ is at least $(\ell -1) \deg F/2 - (\ell -1)$ by
  Riemann-Hurwitz  and the ramification locus of $\theta$ includes the
  roots of $F$ (note: it will be larger than that if $\theta$
  also ramifies over the point at infinity), applying Theorem
  \ref{converse} shows that $C$ is not isotrivial.  
\end{proof}

  As mentioned above, there are obvious examples of maps $\pi: C \lra \bP^1$, where
  $C$ is isotrivial but the ramification locus of $\pi$ is not, but we
  have not found examples of isotrivial curves of the specific form
  $y^m = F(x)$, for $F$ a polynomial with distinct roots that form a non-isotrivial
  set and $m$ is an integer greater than 1 that is not a power of $p$.  

  \begin{question}\label{m}
    Does there exist an isotrivial curve of the form $y^m = F(x)$,
    where $F$ is a polynomial with distinct roots that form a
    non-isotrivial set and $m$ is an integer greater than 1 that is
    not a power of $p$?
  \end{question}

Corollary \ref{non-one} and the techniques of \cite{London} can be
used to show that when $p$ is odd and $m$ is even, the answer to
Question \ref{m} is ``no''; we cannot however rule out examples where
$m$ is odd or $p=2$.

We are now ready to prove a theorem guaranteeing the non-isotriviality
of certain curves obtained by taking inverse images of points under
iterates of a non-isotrivial rational function.  

\begin{thm}\label{non-curve}
  Let $\varphi \in K(x)$ be a non-isotrivial rational function.  Let $\beta \in K$ be
  non-exceptional for $\varphi$.  Then for any $\ell \not= p$, there
  is an $n$ such that the curve given by
\[ y^\ell = \prod_{\substack{ \gamma \in \Kbar \\ \varphi^n(\gamma) =
      \beta}}(x- \gamma) \]
 (where the product $\prod_{\substack{ \gamma \in \Kbar \\ \varphi^n(\gamma) =
      \beta}}(x- \gamma)$ is taken without multiplicities) is not an isotrivial curve.
 \end{thm}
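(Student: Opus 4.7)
My strategy is to combine Theorem~\ref{noniso-inverse}, which guarantees that $T_n := \varphi^{-n}(\beta) \subset \bP^1(\Kbar)$ is non-isotrivial for all sufficiently large $n$, with Corollary~\ref{non-one}, which converts non-isotriviality of the root set of a polynomial $F$ into non-isotriviality of the superelliptic curve $y^\ell = F(x)$ whenever $\deg F > 2\ell$. Writing $T_n' := T_n \setminus \{\infty\}$ for the finite part of $T_n$, the curve in the statement is exactly $y^\ell = F_n(x)$ with $F_n(x) := \prod_{\gamma \in T_n'}(x-\gamma)$.

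The first step is to show $|T_n'| \to \infty$ as $n \to \infty$. If $|T_n|$ were eventually bounded, it would be eventually constant at some value $c$, so the surjection $\varphi: T_{n+1} \to T_n$ would be a bijection of $c$-element sets; every element of $T_n$ would then be totally ramified under $\varphi$. Since $\varphi$ is non-isotrivial (in particular separable of degree $\geq 2$), Riemann-Hurwitz allows at most two totally ramified points, so $\bigcup_m T_m$ would be finite---making $\beta$ exceptional and contradicting our hypothesis. Hence I can take $n$ large enough that $|T_n'| > 2\ell$ (to meet the degree hypothesis of Corollary~\ref{non-one}) and, simultaneously, so that $T_n$ is non-isotrivial by Theorem~\ref{noniso-inverse}.

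The one remaining subtlety is that Corollary~\ref{non-one} requires non-isotriviality of $T_n'$ itself, not merely of $T_n$: in principle, removing a single $\Kbar$-point can turn a non-isotrivial set into an isotrivial one if a $\Kbar$-valued Mobius witness sends $\infty$ outside $\Fpbar \cup \infty$. I would address this by inspecting the proof of Theorem~\ref{noniso-inverse}: the four points of $T_n$ with non-trivial $\fp$-adic cross-ratio produced by Proposition~\ref{badcrossratio} are drawn from Berkovich open discs, and by using non-atomicity of $\rho_\varphi$ to select the relevant points of $\Supp\rho_\varphi$ away from the Berkovich point at $\infty$, the four witnesses can be arranged to lie in finite open discs in $\C_\fp$, hence inside $T_n'$. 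This shows $T_n'$ itself contains four points of non-trivial cross-ratio, is therefore non-isotrivial, and Corollary~\ref{non-one} then applies to $F_n$ to finish the proof. I expect this last bookkeeping step---confirming that non-isotriviality survives removal of $\infty$---to be the main obstacle; a cleaner alternative would be to record a standalone strengthening of Theorem~\ref{noniso-inverse} asserting non-isotriviality of $\varphi^{-n}(\beta) \setminus \{\infty\}$ directly, which is essentially what the existing proof already delivers.
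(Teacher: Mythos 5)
Your high-level blueprint matches the paper's: apply Theorem~\ref{noniso-inverse} to get a non-isotrivial inverse image, check the degree condition, and feed the result into Corollary~\ref{non-one}. You also correctly identify the genuine subtlety: if $\infty \in \varphi^{-n}(\beta)$, non-isotriviality of $T_n$ does not by itself give non-isotriviality of $T_n' = T_n \setminus \{\infty\}$, and it is $T_n'$ that Corollary~\ref{non-one} is applied to. The difficulty is that your proposed resolution of this point is not an argument but an assertion, and the justification you offer is not the right one. Non-atomicity of $\rho_\varphi$ lets you choose support points that are not the Type~I point $\infty$, but that is not enough: in the non-concentric case of Proposition~\ref{badcrossratio}, the open discs $U_i \cap \bP^1(\C_\fp)$ coming from Fact~\ref{I} may perfectly well be of the form $\{|x| > R\}$ when the chosen support points sit on the segment above $\infty$, in which case $\infty$ can reappear among the witnesses even though it was not among the support points you selected. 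So ``select support points away from $\infty$'' does not yield ``witnesses lie in finite discs.'' If you want to push your route through, the correct observation is more pedestrian: Fact~\ref{fact} produces \emph{arbitrarily many} preimage points in each relevant neighborhood, and $\infty$ is a single point, so one may simply discard it and still have the four (or more) finite witnesses required by Lemmas~\ref{con} and~\ref{two}. You would then need to record this as a strengthening of Theorem~\ref{noniso-inverse}, exactly the ``standalone lemma'' you anticipate at the end.

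The paper instead avoids reopening the Berkovich proof at all. If $\infty$ never appears in $\varphi^{-n}(\beta)$, the result is immediate from Theorem~\ref{noniso-inverse} and Corollary~\ref{non-one}. Otherwise, because $\deg_s\varphi > 1$ and $\beta$ is non-exceptional, some $\varphi^{-m}(\beta)$ has at least three points, so one can choose $\beta' \in \varphi^{-m}(\beta)$ whose entire backward orbit avoids $\infty$. Theorem~\ref{noniso-inverse} applied to $\beta'$ gives an $m'$ with $\varphi^{-m'}(\beta')$ non-isotrivial; since this set sits inside $\varphi^{-(m+m')}(\beta) \setminus \{\infty\}$ and a superset of a non-isotrivial set is non-isotrivial, Corollary~\ref{non-one} applies with $n = m+m'$. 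This buys the same conclusion with no re-examination of the equidistribution argument and no separate lemma about where the witness points land. Your preliminary step, showing $|T_n'| \to \infty$ so that the degree hypothesis $\deg F > 2\ell$ of Corollary~\ref{non-one} is eventually met, is correct (modulo being careful that the Riemann--Hurwitz count of totally ramified points must be run on the separable part when $\varphi$ is inseparable); the paper handles the same point implicitly by iterating far enough.
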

\begin{proof}
  If $\infty \notin \varphi^{-n}(\beta)$ for any $n$, then this is
  immediate from Corollary \ref{non-one} and Theorem
  \ref{noniso-inverse}.  Otherwise, since $\deg_s \varphi > 1$
  (because purely inseparable rational functions are isotrivial) and
  $\beta$ is not exceptional for $\varphi$, there is some $m$ such
  that $\varphi^{-m}(\beta)$ contains at least three points.  Thus,
  there is some point $\beta' \in \varphi^{-m}(\beta)$ such that
  $\infty \notin \varphi^{-n}(\beta)$ for any $n$.  Then there is some
  $m'$ such that $\varphi^{-m'}(\beta')$ is not isotrivial by Theorem
  \ref{noniso-inverse}, and since the set of points other than
  $\infty$ in $\varphi^{-(m+m')}(\beta)$ contains
  $\varphi^{-m'}(\beta')$, this set is non-isotrivial as well, so the
  curve given by
\[ y^\ell = \prod_{\substack{ \gamma \in \Kbar \\ \varphi^{m+m'}(\gamma) =
      \beta}}(x- \gamma) \]
is not an isotrivial curve by Corollary \ref{non-one}.
\end{proof}

The second author conjectured \cite[Conjecture 3.1]{London} that when
$\varphi$ is a non-isotrivial polynomial of degree prime to $p$ and $\beta$
is not postcritical for $\varphi$, then for some $n$ and some $\ell$ prime to
$p$, the curve
\[ y^\ell = \prod_{\substack{ \gamma \in \Kbar \\ \varphi^n(\gamma) =
      \beta}}(x- \gamma) \]
is not isotrivial.   Theorem \ref{non-curve}  answers this with many of the hypotheses
removed.  Note that by taking the product without multiplicities, we
essentially remove the issue of $\beta$ being postcritical.  We note
that Ferraguti and Pagano have proved Theorem \ref{non-curve} in the
special case where $\varphi$ is a quadratic polynomial, $\ell =2$, and
$p \not= 2$ (see \cite[Theorem 2.4]{FP}).

\section{Proof of Theorems \ref{Z1} and \ref{Z2}} \label{Z-proofs}

Theorems \ref{Z1} and \ref{Z2} will both follow from the following more general
statement.

\begin{prop}\label{most-gen}
  Let $f \in K[x]$ be non-isotrivial with $\deg f > 1$ and let $\ell
  \not= p$ be
  a prime number.  Let $\alpha, \beta \in K$ where $\beta \notin O^+_\varphi(\alpha)$
  and $\alpha$ is not preperiodic.  Suppose that for some $r$, there
  is a $\gamma \in f^{-r}(\beta)$ such that $\gamma$ is not
  postcritical and $e_{f^r}(\gamma/\beta)$ is prime to $\ell$.  Then
  $\cZ(f, \alpha, \beta, \ell)$ is finite.
 \end{prop}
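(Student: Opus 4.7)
The plan is to reduce first to the case $r=0$, $\gamma=\beta$ (so that $\beta$ itself is not post-critical), and then to combine the non-isotriviality of the curves $y^\ell = f^n(x)-\beta$ supplied by Theorem \ref{non-curve} with the Szpiro--Kim height bound of Theorem \ref{SZ} to contradict the existence of an infinite $\ell$-Zsigmondy set.

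First I would carry out the reduction. The point $\gamma\in f^{-r}(\beta)$ is not in $O_f^+(\alpha)$, for otherwise $\beta=f^r(\gamma)$ would be, and since $\gamma$ is not post-critical it is non-exceptional; thus $\gamma$ satisfies all the hypotheses of the $r=0$ case. Writing $f^r(z)-\beta = c(z-\gamma)^{e_r}g(z)$ with $g(\gamma)\neq 0$ gives
\[
f^{r+m}(\alpha)-\beta \;=\; c\bigl(f^m(\alpha)-\gamma\bigr)^{e_r}\,g\bigl(f^m(\alpha)\bigr),
\]
and since $\gcd(e_r,\ell)=1$, a primitive $\ell$-divisor of $f^m(\alpha)-\gamma$ also serves as a primitive $\ell$-divisor of $f^{r+m}(\alpha)-\beta$, provided it lies outside a finite set of ``bad'' primes (primes of bad reduction of $f$, divisors of $\gamma-\gamma'$ for the other $\gamma'\in f^{-r}(\beta)$, and divisors of $f^k(\alpha)-\beta$ for $k<r$), and provided we discard finitely many exceptional $m$. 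This reduces the proposition to the case $\gamma=\beta$ not post-critical.

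Next, assume $\beta$ is not post-critical. Then for every $n$ the polynomial $f^n(x)-\beta$ has only simple roots, and Theorem \ref{noniso-inverse} together with Corollary \ref{non-one} produces an $n_0$ for which $C_{n_0}\colon y^\ell = f^{n_0}(x)-\beta$ is a non-isotrivial curve of genus $\geq 2$. Assume for contradiction that $\cZ(f,\alpha,\beta,\ell)$ is infinite. For each $m = n_0+n_1 \in \cZ(f,\alpha,\beta,\ell)$ with $n_1$ large, the no-primitive-$\ell$-divisor hypothesis, combined with the finiteness of $\fo_{K,S}^\times/(\fo_{K,S}^\times)^\ell$ and of the $\ell$-part of the $S$-class group (after enlarging $S$ to contain all bad primes), gives a factorization
\[
f^m(\alpha)-\beta \;=\; u_m\,z_m^\ell\,w_m,
\]
with $u_m$ in a fixed finite subset $U\subset K^\times$, $z_m\in K^\times$, and $w_m$ supported on ``old'' primes dividing $\prod_{k<m}(f^k(\alpha)-\beta)$ with controlled height $h(w_m)=O(d^{m-1})$, where $d=\deg f$. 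Reading this at $x_m := f^{n_1}(\alpha)$ exhibits $(x_m,z_m)$ as a $K$-point on one of only finitely many non-isotrivial curves: the twists of $C_{n_0}$ by the $u_m$, together with the bounded-height modifications arising from $w_m$.

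Finally, observe that $h(f^{n_1}(\alpha)) = d^{n_1}h_f(\alpha)+O(1)\to\infty$ as $n_1\to\infty$, since $\alpha$ is not preperiodic and $h_f$ is a genuine height (using non-isotriviality of $f$). On the other hand, Theorem \ref{SZ} yields a uniform upper bound on the heights of $K$-points on each of the finitely many non-isotrivial curves produced in the previous paragraph, contradicting the unboundedness of $h(x_m)$. The main obstacle will be this descent step: producing the factorization $f^m(\alpha)-\beta = u_m z_m^\ell w_m$ and then arguing that only finitely many non-isotrivial curves are required as $m$ varies. The crucial quantitative input is the bound $h(w_m)=O(d^{m-1})$, which is strictly dominated by $h(f^m(\alpha)-\beta)\sim d^m$; this lets the $\ell$-th power factor $z_m^\ell$ carry the dominant contribution to the height, so that the correction $w_m$ can be packaged into finitely many equivalence classes of twisted covers.
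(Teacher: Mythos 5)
Your setup is headed in the right direction---you correctly identify the two key ingredients (non-isotriviality of the curves $y^\ell = f^{n_0}(x)-\beta$ via Theorem \ref{non-curve}, and the Szpiro--Kim height bound of Theorem \ref{SZ}), and the reduction from $\gamma\in f^{-r}(\beta)$ to the base point is essentially sound. But the descent step you flag as ``the main obstacle'' is in fact where the argument breaks, and the proposed resolution does not work. You write $f^m(\alpha)-\beta = u_m z_m^\ell w_m$ with $w_m$ supported on old primes and $h(w_m)=O(d^{m-1})$, and then claim that since $h(w_m)$ is strictly dominated by $h(f^m(\alpha)-\beta)\sim d^m$, the contribution of $w_m$ can be ``packaged into finitely many equivalence classes of twisted covers.'' That inference is false: $O(d^{m-1})$ is unbounded as $m\to\infty$, the set of old primes supporting $w_m$ grows without bound, and the class of $w_m$ in $K^\times/(K^\times)^\ell$ therefore ranges over infinitely many values. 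So the point $(x_m,z_m)$ lands on infinitely many pairwise non-isomorphic twists $u_m w_m\, y^\ell = f^{n_0}(x)-\beta$, and you cannot invoke Theorem \ref{SZ} with uniform constants across that infinite family; the constants $B_1,B_2$ there depend on the curve. Smallness of $h(w_m)$ relative to $h(z_m^\ell)$ does not rescue this, because the issue is finiteness of twist classes, not dominance in the height.

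The paper evades exactly this problem by never trying to descend to $K$-rational points on twists. Instead, in Lemma \ref{from-Kim}, one works on the \emph{fixed} curve $C: y^\ell = F(x)$ and takes the point $P_a=(a,\sqrt[\ell]{F(a)})$ defined over the varying extension $K(\sqrt[\ell]{F(a)})$. The crucial feature of Theorem \ref{SZ} is that it bounds $h_{\cK_C}(P)$ against the logarithmic discriminant $d_K(P)$, and the genus formula (as in \eqref{disc1}) gives $d_K(P_a)\ll \sum_{\ell\nmid v_\fp(F(a))} N_\fp + O(1)$. Turned around, this yields the \emph{lower} bound of Lemma \ref{from-Kim}: the weighted count of primes where $\ell\nmid v_\fp(F(a))$ must grow linearly in $h(a)$. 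Combining that lower bound (Lemma \ref{delta}) with the upper bound from Lemma \ref{old} on the contribution of old primes forces a new prime with $\ell\nmid v_\fp(f^n(\alpha)-\gamma)$, with no twisting argument needed. If you want to salvage your plan, you should redirect toward this discriminant-based use of Theorem \ref{SZ} rather than trying to push the non-$\ell$-power obstruction into the defining equation of a twisted curve.
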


We will prove Proposition \ref{most-gen} by combining effective forms
of the Mordell Conjecture over function fields  (see \ref{SZ})
with Theorem \ref{non-curve} and the following 
lemma from \cite[Lemma 5.2]{BT} (see also \cite[Proposition 5.1]{GNT}). Note that while this lemma is stated in characteristic 0 
in \cite{BT}, the proof is the same word-for-word for finite
extensions of $\Fp(t)$.  

\begin{lem}\label{old}
Let $f\in K[x]$ with $d=\deg(f)\geq 2$. Let $\alpha\in K$ with $h_f(\alpha)>0$.  Let
  $\gamma_1, \gamma_2 \in K$ such that $\gamma_2\notin\O_f(\gamma_1)$. 
  For $n>0$, let $\cX(n)$ denote the
  set of primes $\p$ of $\fo_K$ such that
\[ 
\min(v_\p(f^m(\alpha)-\gamma_1), v_\fp(f^n(\alpha) - \gamma_2)) > 0
\]
for some $0 < m < n$. Then for any $\epsilon >0$, we have
\begin{equation*}
\sum_{\p\in\cX(n)} N_\p\leq \epsilon d^n h_f(\alpha)+ O_\epsilon(1).  
\end{equation*}
for all $n$.
\end{lem}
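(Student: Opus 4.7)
The plan is to bucket each prime $\p \in \cX(n)$ by the smallest index $m = m(\p)$ for which $v_\p(f^m(\alpha) - \gamma_1) > 0$, and then to split this index at a threshold $T = T(\epsilon)$ chosen independently of $n$, so that contributions with $m \le T$ are absorbed into $O_\epsilon(1)$ while contributions with $m > T$ are absorbed into $\epsilon d^n h_f(\alpha)$.

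First I would discard into an $O(1)$ term the finitely many primes at which $f$ has bad reduction, or at which $\gamma_1$ or $\gamma_2$ have a pole. For any remaining $\p \in \cX(n)$, good reduction means $f$ induces a well-defined self-map of $\bP^1$ modulo $\p$, so from $v_\p(f^m(\alpha) - \gamma_1) > 0$ one inductively obtains $v_\p(f^n(\alpha) - f^{n-m}(\gamma_1)) > 0$. Combining with $v_\p(f^n(\alpha) - \gamma_2) > 0$, the ultrametric inequality yields
\[
v_\p\bigl(f^{n-m}(\gamma_1) - \gamma_2\bigr) > 0.
\]
The hypothesis $\gamma_2 \notin \O_f(\gamma_1)$ guarantees that $f^{n-m}(\gamma_1) - \gamma_2$ is a nonzero element of $K$ for every $0 < m < n$, so this divisibility is a genuine constraint on $\p$.

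Next I would split the remaining primes in $\cX(n)$ as $\cX_{\mathrm{lo}}(n) \sqcup \cX_{\mathrm{hi}}(n)$ according to whether $m(\p) \le T$ or $m(\p) > T$. Since $v_\p \ge 1$ at each contributing prime, bounding local-degree counts by heights gives
\[
\sum_{\p \in \cX_{\mathrm{lo}}(n)} N_\p \;\le\; \sum_{m=1}^{T} h\bigl(f^m(\alpha) - \gamma_1\bigr) \;\le\; C_1 d^{T} h_f(\alpha) + O(T),
\]
which is a constant depending on $\epsilon, f, \alpha, \gamma_1$. For $\cX_{\mathrm{hi}}(n)$ each prime divides $f^k(\gamma_1) - \gamma_2$ for some $1 \le k = n - m \le n - T - 1$, giving
\[
\sum_{\p \in \cX_{\mathrm{hi}}(n)} N_\p \;\le\; \sum_{k=1}^{n-T-1} h\bigl(f^k(\gamma_1) - \gamma_2\bigr) \;\le\; \frac{C_2\, d^{n-T}}{d-1}\, h_f(\gamma_1) + O(n),
\]
using $h(f^k(\gamma_1)) = d^k h_f(\gamma_1) + O(1)$.

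Finally I would choose $T = T(\epsilon)$ large enough that $C_2 d^{-T} h_f(\gamma_1)/(d-1) < \epsilon\, h_f(\alpha)$; crucially, the hypothesis $h_f(\alpha) > 0$ is what makes such a $T$ exist as a constant depending only on $\epsilon$. Then $\cX_{\mathrm{hi}}(n)$ contributes at most $\epsilon d^n h_f(\alpha) + O(n)$, while $\cX_{\mathrm{lo}}(n)$ together with the discarded primes yields $O_\epsilon(1)$; the extra $O(n)$ is absorbed into $\epsilon d^n h_f(\alpha)$ for $n$ large and into $O_\epsilon(1)$ for $n$ bounded. The main technical step I expect to require the most care is the propagation $v_\p(f^m(\alpha) - \gamma_1)>0 \Rightarrow v_\p(f^n(\alpha) - f^{n-m}(\gamma_1))>0$ under good reduction when iterates approach $\infty$; this is handled by passing to the $\fp$-adic chordal metric of Section \ref{heights}, or by further enlarging the discarded finite set to control the pole locus of iterates of $\gamma_1$, neither of which disturbs the $O_\epsilon(1)$ bookkeeping. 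A separate mild case is $h_f(\gamma_1) = 0$ (i.e.\ $\gamma_1$ preperiodic), where the iterates $f^k(\gamma_1)$ have bounded height and the $\cX_{\mathrm{hi}}$ bound simplifies to $O(n)$, still $o(d^n h_f(\alpha))$.
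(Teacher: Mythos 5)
Your proof is correct and follows the standard argument used in the cited source \cite[Lemma 5.2]{BT} (the paper does not reprove the lemma but imports it): bucket each $\p\in\cX(n)$ by the first index $m(\p)$ of coincidence with $\gamma_1$, use good reduction to extract the constraint $v_\p\bigl(f^{\,n-m(\p)}(\gamma_1)-\gamma_2\bigr)>0$, split at a threshold $T(\epsilon)$, and bound each half by a geometric sum of heights, choosing $T$ so that $d^{-T}h_f(\gamma_1)\ll\epsilon h_f(\alpha)$. One point worth making explicit in your write-up: the ``low'' bound $\sum_{m\le T}h(f^m(\alpha)-\gamma_1)$ tacitly requires $f^m(\alpha)\neq\gamma_1$ for all $m>0$ (equivalently $\gamma_1\notin O^+_f(\alpha)$); this hypothesis is not written in the lemma statement but is satisfied in each of its applications in the paper, and without it $\cX(n)$ would contain every prime dividing $f^n(\alpha)-\gamma_2$ once $n$ exceeds the relevant index, so the asserted bound could fail.
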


The next result we use follows from (any of the) effective forms of the Mordell Conjecture over functions fields \cite{Kim1,Moriwaki,Szpiro}. To make this precise, we need some terminology. Let $C$ be a curve over $K$ and let $P\in C$ be a point on $C$ defined over some finite extension $K(P)/K$. Then we let $h_{\cK_C}(P)$ denote the logarithmic height of $P$ with respect to the canonical divisor $\cK_C$ of $C$ and let 
\[d_K(P)=\frac{2g(K(P))-2}{[K(P):K]}\]
denote the logarithmic discriminant of $P$; here $g(K(P))$ is the
genus of $K(P)$. Then we have the following height bounds for rational
points on non-isotrivial curves due to Szpiro \cite{Szpiro} and Kim
\cite{Kim1}. 

\begin{thm}\label{SZ}
  
  Let $C$ be a non-isotrivial curve of genus at least two over a finite extension  
  $K$ of $\mathbb{F}_p(t)$. Then there are constants $B_1>0$ and $B_2$ (depending only on $C$) such that 
  \begin{equation} \label{Kim-eq}
    h_{\cK_C}(P) \leq B_1 d_K(P) + B_2
  \end{equation}
 holds for all $P\in C$.  
\end{thm}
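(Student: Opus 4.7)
The plan is to translate the statement into a slope inequality for sections of a semistable fibration, and then invoke a positivity result for non-isotrivial families of curves in characteristic $p$. After passing to a finite Galois extension $K'/K$ over which $C$ acquires semistable reduction, there is a smooth projective curve $B'$ over $\Fpbar$ with function field $K'$ carrying a relatively minimal semistable model $\pi:\cX\to B'$ whose generic fiber is $C\times_K K'$. Non-isotriviality of $C$ is equivalent to the induced moduli map $B'\to\overline{\cM}_g$ being non-constant, which is the geometric hypothesis I will feed into the slope inequality below.

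Given $P\in C(\Kbar)$, let $L=K'\cdot K(P)$, let $B_P$ be the smooth projective model of $L$, and let $f:B_P\to B'$ be the induced map. After resolving any singularities introduced by the base change, $P$ defines a section $\sigma:B_P\to\tilde\cX_{B_P}$ of the pulled-back family. Standard comparisons give, up to bounded error in $[L:K']$,
\[ [L:K']\cdot h_{\cK_C}(P)=\deg\sigma^*\omega_{\tilde\cX_{B_P}/B_P},\qquad [L:K]\cdot d_K(P)=2g(B_P)-2, \]
so the theorem reduces to proving a bound
\[ \deg\sigma^*\omega_{\tilde\cX_{B_P}/B_P}\leq B_1'\bigl(2g(B_P)-2\bigr)+B_2'\cdot[L:K'] \]
uniform over all sections $\sigma$ arising from points of $C$.

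I would split this bound into two pieces. First, $\omega_{\tilde\cX_{B_P}/B_P}$ differs from $p^*\omega_{\cX/B'}$, where $p:\tilde\cX_{B_P}\to\cX$ is the projection, by an effective divisor supported on the singular and exceptional fibers; this discrepancy is controlled linearly by the ramification of $f$, and hence by the term $2g(B_P)-2$. Second, one needs a Parshin-Arakelov type slope inequality bounding $\deg(\sigma')^*\omega_{\cX/B'}$ for sections $\sigma'$ of $\pi$ in terms of invariants of $B'$ and $\pi$ alone. The non-isotriviality of $\pi$ is exactly what forces the Hodge bundle $\pi_*\omega_{\cX/B'}$ to have positive degree, and hence what makes the slope bound nontrivial.

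The main obstacle is establishing this slope inequality in characteristic $p$: one cannot directly import the complex-analytic or Kodaira-Spencer semipositivity arguments available in characteristic zero, and wild ramification of $f$ together with possibly inseparable base changes of $\pi$ produce error terms that must be absorbed into $d_K(P)$. Szpiro's resolution in \cite{Szpiro} is to twist by an appropriate Frobenius pullback of $\pi$ whose Kodaira-Spencer map is necessarily nonzero under non-isotriviality, and to extract positivity from that map; Kim's argument in \cite{Kim1} uses a characteristic-$p$ analog of Parshin's covering construction to the same end. Either route yields the inequality with constants depending only on $\pi$, and hence only on $C$, completing the proof.
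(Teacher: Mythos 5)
Your proposal follows the same route as the paper: after passing to a semistable model and, if necessary, a Frobenius twist to obtain nonzero Kodaira--Spencer class, the key positivity input is exactly Szpiro's and Kim's slope bounds for sections of non-isotrivial fibered surfaces, which you correctly identify as the engine. The one step you gloss over is the comparison between $d_K(P)$ in the conclusion and the quantity $2g(B_P)-2$ you actually estimate. Since $B_P$ is the smooth model of the compositum $L = K'\cdot K(P)$ rather than of $K(P)$ itself, the relation $[L:K]\, d_K(P)=2g(B_P)-2$ that you write down need not hold, and in fact Riemann--Hurwitz applied to $B_P\to B_{K(P)}$ goes the \emph{wrong} way for your purposes, yielding only the lower bound $2g(B_P)-2 \geq [L:K(P)]\bigl(2g(K(P))-2\bigr) = [L:K]\, d_K(P)$, which cannot be used to replace $2g(B_P)-2$ by $[L:K]\, d_K(P)$ on the right-hand side of an upper bound. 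What is needed is an upper bound on $g(L)$ in terms of $g(K(P))$, $g(K')$, and the degrees involved; this is precisely Castelnuovo's inequality, which the paper invokes to pass from a bound in $d_L$ (or $d_{L^r}$) back to one in $d_K$ while keeping $B_1$ and $B_2$ depending only on $C$.
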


\begin{remark} The first of these bounds (with explicit $B_1$ and $B_2$ in the semistable case) are due to Szpiro \cite[\S3]{Szpiro}, and the best possible bounds (i.e., with smallest possible $B_1$) are due to Kim \cite{Kim1}. Strictly speaking, the bound in \cite[\S3]{Szpiro} is stated for semistable curves. However, one may always pass to a finite extension $L/K$ over which $C$ is semistable \cite[\S1]{Szpiro} and thus obtain bounds of the form in \eqref{Kim-eq}. Likewise, the bound in \cite{Kim1} is stated for curves with nonzero Kodaira-Spencer class. However, the general non-isotrivial case follows from this one as follows. Assuming that $C/K$ is non-isotrivial and $\text{char}(K)=p$, there is an inseparability degree $r=p^e$ and a separable extension $L/K$ such that $C$ is defined over $L^{r}$ and that the Kodaira-Spencer class of $C$ over $L^r$ is nonzero; see \cite[pp. 51-53]{Szpiro}. Now apply Kim's theorem to $C/L^r$. In either case, Castelnuovo's inequality \cite[Theorem 3.11.3]{Stichtenoth} applied to the composite extensions $L(P)=LK(P)$ or $L^r(P)=L^rK(P)$ may be used to appropriately alter $B_1$ and $B_2$ to go from bounds with $d_L$ or $d_{L^r}$ back to those with $d_K$.             
\end{remark}

Before we apply the height bounds for points on curves from Theorem
\ref{SZ} to dynamics, we need the following elementary observation
about valuations and powers.

\begin{lem}\label{lem:valuation} 
  Let $K/\mathbb{F}_p(t)$ be finite extension and let $\ell\neq p$ be
  a prime.  Then there is a finite extension $L$ of $K$ such that if
  $u$ is any element of $K$ with the property that
  $\ell\mid v_{\fp}(u)$ for all primes $\fp$ of $K$, then $u$ is an
  $\ell$-th power in $L$.
\end{lem}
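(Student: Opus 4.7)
The plan is to treat this as a Kummer-theoretic finiteness problem. Since $\ell \neq p$ and $k := K \cap \Fpbar$ is a finite field, the subgroup
\[ K_\ell := \{u \in K^* : \ell \mid v_\fp(u) \text{ for all places } \fp \text{ of } K\} \]
should be finitely generated modulo $(K^*)^\ell \cdot k^*$, and I will take $L$ to be the finite extension of $K$ obtained by adjoining $\ell$-th roots of a finite set of generators together with $\ell$-th roots of all elements of $k^*$.

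Let $B$ denote the smooth projective curve over $k$ with function field $K$. For any $u \in K_\ell$, write $\operatorname{div}(u) = \ell D$ for some divisor $D$ on $B$; since $\operatorname{div}(u)$ has degree zero, so does $D$, and $\ell[D] = 0$ in $\Pic^0(B)(k)$. This yields a homomorphism
\[ \phi \colon K_\ell \longrightarrow \Pic^0(B)(k)[\ell], \qquad u \mapsto [D], \]
whose kernel is exactly $k^* \cdot (K^*)^\ell$: indeed, if $D = \operatorname{div}(w)$ then $u/w^\ell$ has trivial divisor and therefore lies in $k^*$. Because $\Pic^0(B) = \Jac(B)$ is a projective variety over the finite field $k$, the group $\Jac(B)(k)$ is finite, so $\Pic^0(B)(k)[\ell]$ is finite and hence so is $K_\ell/(k^*(K^*)^\ell)$.

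Now pick coset representatives $u_1, \ldots, u_r \in K_\ell$ and enlarge $k$ to a finite extension $k'$ in which every element of the finite cyclic group $k^*$ is an $\ell$-th power (any sufficiently large $\F_{p^m}$ works, since $k^*$ has only finitely many cosets modulo $\ell$-th powers). Define
\[ L := K \cdot k'\bigl(u_1^{1/\ell}, \ldots, u_r^{1/\ell}\bigr), \]
which is a finite extension of $K$ depending only on $K$ and $\ell$. For any $u \in K_\ell$ we have $u = c \, u_i \, w^\ell$ for some $c \in k^*$, some index $i$, and some $w \in K^*$; each of $c$, $u_i$, and $w^\ell$ is by construction an $\ell$-th power in $L$, so $u$ is too. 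The one substantive step is the finiteness of $K_\ell/(k^*(K^*)^\ell)$, which is an $\ell$-Selmer-type statement that here reduces to the finiteness of $\Jac(B)(k)$ for the finite constant field $k$; the hypothesis $\ell \neq p$ is what ensures the Kummer/Picard pairing is not disrupted by inseparability in characteristic $p$.
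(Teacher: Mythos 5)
Your proof is correct and follows essentially the same route as the paper's: both reduce the problem to the finiteness of the degree-zero divisor class group $\operatorname{Cl}^0(K) = \operatorname{Pic}^0(B)(k)$ (which is finite because $k$ is a finite field), choose finitely many coset representatives for $u$ modulo $k^*(K^*)^\ell$, and adjoin $\ell$-th roots of those representatives together with $\ell$-th roots of elements of the finite constant field. Your packaging via the homomorphism $\phi\colon K_\ell\to\operatorname{Pic}^0(B)(k)[\ell]$ with kernel $k^*(K^*)^\ell$ is a cleaner way of organizing what the paper does more informally (the paper adjoins $\ell$-th roots of only the finitely many constants $c_{u,u'}$ that actually occur rather than passing to a constant-field extension $k'$), but the content is the same.
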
 
\begin{proof} 
  Suppose that $u \in K$ is such that $\ell\mid v_{\fp}(u)$ for all
  primes $\fp$ of $K$. Then the divisor $(u)=\ell D_u$ for some
  divisor $D_u\in\text{Div}^0(K)$ of degree $0$. Hence, the linear
  equivalence class of $D_u$ is an $\ell$-torsion class in
  $\text{Cl}^0(K)$, the group of divisor classes of degree $0$. In
  particular, there are only finitely many possible linear equivalence
  classes for $D_u$ by \cite[Proposition 5.1.3]{Stichtenoth}.  Hence
  there is a finite set $\cS$ of $u \in K$ with $u = \ell D_u$ for
  some $D_u\in\text{Div}^0(K)$ such that for any $u' \in K$ with
  $u' = \ell D_{u'}$ for some $D_u\in\text{Div}^0(K)$, we have that
  $D_{u'}$ is linearly equivalent to $D_u$ for some $u \in \cS$.  Let
  $L'$ be the finite extension of $K$ generated by the $\ell$-th roots
  of the elements of $\cS$.  Now if $u$ and $u'$ are two such elements
  of $K$ as above such that $D_u$ and $D_{u'}$ are linearly
  equivalent, then $D_{u}-D_{u'}=(w_{u,u'})$ for some $w_{u,u'}\in
  K$. Hence, $u/u'=c_{u,u'}w_{u,u'}^\ell$ for some $c_{u,u'}$ in the
  field of constants of $K$. In particular, there are only finitely
  many possible such $c_{u,u'}$ since the field of constants of $K$ is
  finite.  Adjoining the $\ell$-th roots of these $c_{u,u'}$ to $L'$
  gives a finite extension $L$ of $K$.
\end{proof}   

\begin{lem}\label{from-Kim}
  Let let $\cS$ be a finite set of primes of $K$, let  $F \in \fo_{K,S}[z]$ be
  a polynomial without repeated roots and let $\ell\neq p$ be a prime
  such that $C: y^\ell = F(x)$ is a non-isotrivial curve of genus
  $g(C) > 1$.  Then there are constants $r_1>0$ and $r_2$ (depending
  on $F$, $\ell$, $K$, and $\cS$) such that
  \begin{equation}\label{lower}
    \sum_{\substack{v_{\fp}(F(a)) > 0 \\ \ell \nmid v_{\fp}(F(a))}} N_\fp
     \geq r_1 h(a)+r_2
   \end{equation}
holds for all $a \in \fo_{K,S}$.  
\end{lem}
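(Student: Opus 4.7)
The plan is to apply the effective Mordell bound of Theorem~\ref{SZ} to the non-isotrivial curve $C\colon y^\ell = F(x)$ at the algebraic point $P_a := (a, \sqrt[\ell]{F(a)})$, which is defined over the Kummer extension $L_a := K(\sqrt[\ell]{F(a)})$ of degree $1$ or $\ell$. If $[L_a:K] = 1$ (equivalently, $F(a) \in (K^*)^\ell$) or $F(a) = 0$, then $P_a \in C(K)$, and Theorem~\ref{SZ} with $d_K(P_a) = 2g_K - 2$ constant, combined with the standard height comparability between $h_{\cK_C}$ and $h \circ x$, bounds $h(a)$ by a constant. These finitely many exceptional values of $a$ will be absorbed into $r_2$, so I may henceforth assume $[L_a:K] = \ell$ and focus on bounding $T_a := \sum_{v_\fp(F(a))>0,\,\ell\nmid v_\fp(F(a))} N_\fp$ from below.

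The technical core is to bound $d_K(P_a) = (2g(L_a)-2)/\ell$ from above in terms of $T_a$. Since $\ell \ne p$, a standard Kummer computation shows that a prime $\fp$ of $K$ ramifies in $L_a$ if and only if $\ell \nmid v_\fp(F(a))$, with tame ramification index $\ell$ and trivial residue degree. Tame Riemann--Hurwitz then yields
\[
2g(L_a) - 2 \;=\; \ell(2g_K - 2) + (\ell-1)\!\!\sum_{\ell \,\nmid\, v_\fp(F(a))}\!\! N_\fp.
\]
For $\fp \notin S$ the assumptions $a \in \fo_{K,S}$ and $F \in \fo_{K,S}[z]$ force $v_\fp(F(a)) \geq 0$, so $\ell \nmid v_\fp(F(a))$ is equivalent to the condition defining $T_a$, and the contribution of such $\fp$ is at most $T_a$. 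The primes in $S$ that ramify in $L_a/K$ contribute at most a constant $C_S := \sum_{\fp \in S} N_\fp$. Dividing by $\ell$ gives $d_K(P_a) \leq \tfrac{\ell-1}{\ell}\, T_a + C_1$ for a constant $C_1$ depending only on $K, \ell, S$.

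Theorem~\ref{SZ}, applicable since $C$ is a non-isotrivial curve of genus $g(C) > 1$, then yields $h_{\cK_C}(P_a) \leq B_1 d_K(P_a) + B_2$ with $B_1, B_2$ depending only on $C$. Combining with the lower bound
\[
h_{\cK_C}(P_a) \;\geq\; \tfrac{2g(C)-2}{\ell}\, h(a) + O(1),
\]
which follows from standard comparability of heights associated to positive-degree divisors on $C$ (here $\cK_C$ of degree $2g(C)-2$ and $x^*\cO_{\bP^1}(1)$ of degree $\ell$), yields the desired bound $T_a \geq r_1 h(a) + r_2$ with $r_1 = \tfrac{2g(C)-2}{B_1(\ell-1)} > 0$. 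The main technical point requiring care is the verification of the tame Riemann--Hurwitz calculation in positive characteristic, in particular ruling out any wild effects or constant-field extensions in $L_a/K$; both concerns are harmless here, the former because of the tameness assumption $\ell \ne p$, and the latter because any constant-field change contributes only a bounded additive term that can be folded into $C_1$.
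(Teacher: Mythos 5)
Your proof is correct and uses the same core machinery as the paper's: apply Theorem~\ref{SZ} to $P_a=(a,\sqrt[\ell]{F(a)})$ and control $d_K(P_a)$ via tame Riemann--Hurwitz (equivalently, the Kummer genus formula) for $L_a/K$, then convert via the comparability of $h_{\cK_C}$ with $h\circ\pi$. The organization is slightly different and, usefully, a bit leaner. The paper's case split is on whether $\ell\mid v_\fp(F(a))$ for \emph{every} $\fp$; in that case it invokes Lemma~\ref{lem:valuation} (finiteness of $\ell$-torsion in $\mathrm{Cl}^0(K)$ and finiteness of the constant field) to produce a single finite extension $L/K$ over which all such $F(a)$ become $\ell$-th powers, thereby bounding the discriminant. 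You instead split on $[L_a:K]\in\{1,\ell\}$, and in the branch $[L_a:K]=\ell$ your Riemann--Hurwitz estimate $d_K(P_a)\leq\tfrac{\ell-1}{\ell}T_a+C_1$ already covers the sub-case where every $v_\fp(F(a))$ is divisible by $\ell$: there $L_a/K$ is everywhere unramified, so $2g(L_a)-2=\ell(2g_K-2)$, $d_K(P_a)=2g_K-2$, and $T_a=0$, and the inequality is a triviality. This makes Lemma~\ref{lem:valuation} unnecessary, which is a genuine if modest streamlining. Two small points to tidy for full rigor: in your first case what you actually obtain is that $h(a)$ is bounded, not per se that there are ``finitely many'' such $a$; boundedness is what goes into choosing $r_2$ (they are equivalent here since the constant field is finite, but the boundedness statement is the one you should use). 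And the genus formula you (and the paper) rely on assumes $\mu_\ell\subset K$; you gesture at this at the end, and the clean fix is to note that passing to the constant-field extension $K(\mu_\ell)$ changes $d_K$ by a bounded additive amount that folds into $C_1$.
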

\begin{proof} Suppose that $C: y^\ell = F(x)$ is a non-isotrivial curve of genus $g(C) > 1$. Then given $a \in \fo_{K,S}$, we let $u_a:=F(a)$ and choose  a corresponding point $P_a=\big(a,\sqrt[\ell]{u_a}\big)$ on $C$. From here, we proceed in cases. 

Suppose first that $\ell\mid v_{\fp}(u_a)$ for all primes $\fp$ of $K$. Then by Lemma \ref{lem:valuation} there exists a finite extension $L/K$ (independent of $a$) such that $u_a$ is an $\ell$th power in $L$. In particular, since we may assume that $L$ contains a primitive $\ell$th root of unity, $K(P_a)\subseteq L$. Therefore, \eqref{Kim-eq} implies that $h_{\cK_C}(P_a)$ is absolutely  bounded. However, the canonical divisor class is ample in genus at least $2$, so that the set of possible points $P_a$ is finite in this case. Therefore, $h(a)$ is bounded and \eqref{lower} holds trivially (take $r_1=1$ and choose $r_2$ to be sufficiently negative).

Now suppose that there exists a prime $\fp$ of $K$ such that $\ell\nmid v_{\fp}(u_a)$. Then we may apply the genus formula in \cite[Corollary 3.7.4]{Stichtenoth} to deduce that                              
\begin{equation}\label{disc1}
\begin{split} 
d(P_a)&=2g(K)-2+\frac{1}{\ell}\sum_{\fp}\big(\ell-\gcd(\ell,v_{\fp}(u_a)\big)N_{\fp}\\[8pt] 
&=2g(K)-2+\Big(\frac{\ell-1}{\ell}\Big)\sum_{\substack{v_{\fp}(u_a) > 0 \\ \ell \nmid v_{\fp}(u_a)}}N_\fp \;\; 
+\;\; \Big(\frac{\ell-1}{\ell}\Big)\sum_{\substack{v_{\fp}(u_a)<0 \\ \ell \nmid v_{\fp}(u_a)}}N_\fp \\[8pt]
&\leq 2g(K)-2+ \Big(\frac{\ell-1}{\ell}\Big)\sum_{\substack{v_{\fp}(u_a) > 0 \\ \ell \nmid v_{\fp}(u_a)}}N_\fp \;\; 
+ \Big(\frac{\ell-1}{\ell}\Big)\sum_{\fp\in \mathcal{S}}N_{\fp},    
\end{split}  
\end{equation} 
since the only way that $u_a:=F(a)$ can have negative valuation at $\fp$ is if $\fp\in \mathcal{S}$. However, this is a finite set of primes. Therefore, \eqref{disc1} implies that 
\begin{equation}\label{disc2}
d(P_a)\leq\Big(\frac{\ell-1}{\ell}\Big)\sum_{\substack{v_{\fp}(F(a)) >0 \\ \ell \nmid v_{\fp}(F(a))}}N_\fp + O_{K,F,\mathcal{S}}(1). 
\end{equation}
On the other hand, if $\pi:C\rightarrow\mathbb{P}^1$ is the map given by projection onto the $x$-coordinate, then $\pi$ pulls back a degree one divisor on $\mathbb{P}^1$ (yielding the Weil height on $\mathbb{P}^1$) to a degree $\ell$ divisor on $C$. Hence, the algebraic equivalence of divisors and \cite[Thm III.10.2]{SilvAdvanced} together imply that   
\[
\lim_{h_{\cK_C}(P)\rightarrow\infty} \frac{h(\pi(P))}{h_{\cK_C}(P)}=\frac{\ell}{2g(C)-2}.
\]
In particular, we may deduce that  
\begin{equation}\label{height-div}
h(a)\leq \frac{(1+\epsilon)\ell}{(2g(C)-2)}h_{\cK_C}(P_a)+O_{K,F,\ell,\epsilon}(1)
\end{equation} 
for all $\epsilon>0$ and all $a\in K$ (not just $a\in \fo_{K,S}$). Finally, by choosing $\epsilon=1$ and combining \eqref{Kim-eq}, \eqref{disc2}, and\eqref{height-div}, we see that 
there are constants $r_1>0$ and $r_2$ (depending on $F$, $\ell$, $K$, and $\cS$) such that
\[ \sum_{\substack{v_{\fp}(F(a)) > 0 \\ \ell \nmid v_{\fp}(F(a))}} N_\fp
     \geq r_1 h(a)+r_2
\]
holds for all $a \in \fo_{K,S}$. In particular, after replacing $r_1$ and $r_2$ with the minimum of the corresponding constants from the first and second cases above, we prove Lemma \ref{from-Kim}.         
\end{proof}

\begin{lem}\label{delta}
  Let $f \in K[z]$ be a non-isotrivial polynomial with $\deg f = d > 1$ and
  let $\alpha, \gamma \in K$ where $\gamma$ is not postcritical and
  $\alpha$ is not preperiodic.
  Then for any prime $\ell\ne p$, there is a $\delta >
  0$ such that for all sufficiently
  large $n$, we have
  \begin{equation}
  \sum_{\substack{v_{\fp}(f^n(\alpha) - \gamma) > 0 \\ \ell \nmid
      v_{\fp}(f^n(\alpha) - \gamma)}} N_\fp \geq \delta  d^n h_f(\alpha).
\end{equation}
\end{lem}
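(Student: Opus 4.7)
The plan is to combine Theorem \ref{non-curve} and Lemma \ref{from-Kim} via a clean factorization relating iterates of $f$ to the polynomial whose roots are the distinct $f^r$-preimages of $\gamma$.

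First I would use Theorem \ref{non-curve} with $\beta = \gamma$ to choose a positive integer $r$ for which the curve $C_r : y^\ell = F_r(x)$ is non-isotrivial, where
$$F_r(x) \;=\; \prod_{\substack{\gamma' \in \Kbar \\ f^r(\gamma') = \gamma}}(x - \gamma')$$
is the product taken without multiplicity. By enlarging $r$ if needed, Riemann--Hurwitz together with the growth $\deg F_r = (\deg_s f)^r \to \infty$ ensures $g(C_r) > 1$, as required by Lemma \ref{from-Kim}. Because $\gamma$ is not postcritical, every preimage $\gamma' \in f^{-r}(\gamma)$ has ramification index exactly $q := \deg_i f^r$; hence $f^r(z) - \gamma$ and $F_r(z)^q$ share the same roots with the same multiplicities, so there is a constant $c \in K^{\times}$ with
$$f^r(z) - \gamma \;=\; c \cdot F_r(z)^{q}.$$

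Next I would enlarge $S$ to include all primes of bad reduction of $f$, all primes at which $\alpha$, the coefficients of $f$, or the coefficients of $F_r$ fail to be integral, and all primes dividing $c$ or the leading coefficient of $F_r$. Then $a_n := f^{n-r}(\alpha) \in \fo_{K,S}$ for every $n \geq r$, and Lemma \ref{from-Kim} applied to $F_r$ yields constants $r_1 > 0$ and $r_2$ with
$$\sum_{\substack{v_\fp(F_r(a_n)) > 0 \\ \ell \,\nmid\, v_\fp(F_r(a_n))}} N_\fp \;\geq\; r_1 \, h(a_n) + r_2.$$
For every $\fp \notin S$, the identity $f^n(\alpha) - \gamma = c \cdot F_r(a_n)^q$ gives $v_\fp(f^n(\alpha) - \gamma) = q \cdot v_\fp(F_r(a_n))$, and since $q$ is a power of $p$ while $\ell \ne p$, both the positivity and the non-divisibility by $\ell$ transfer exactly. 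Thus the two sums above differ only by a bounded contribution from the finite set $S$.

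To conclude, I would combine this with the standard estimate $h(f^{n-r}(\alpha)) = d^{n-r} h_f(\alpha) + O(1)$ and the fact that $h_f(\alpha) > 0$ (since $\alpha$ is not preperiodic and $f$ is non-isotrivial). Together these yield
$$\sum_{\substack{v_\fp(f^n(\alpha) - \gamma) > 0 \\ \ell \,\nmid\, v_\fp(f^n(\alpha) - \gamma)}} N_\fp \;\geq\; r_1 d^{n-r} h_f(\alpha) + O(1),$$
so any $\delta < r_1/d^r$ works for all sufficiently large $n$. The main subtlety is the identity $f^r(z) - \gamma = c \cdot F_r(z)^q$: without the hypothesis that $\gamma$ is not postcritical, different preimages could contribute different exponents, and the clean transfer of $\fp$-valuations modulo $\ell$ between $F_r(a_n)$ and $f^n(\alpha) - \gamma$ would break down.
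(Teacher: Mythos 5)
Your proposal is correct and follows essentially the same route as the paper: choose $m$ (your $r$) via Theorem \ref{non-curve} so that $y^\ell = F_m(x)$ is a non-isotrivial curve of genus $>1$, use the non-postcriticality of $\gamma$ to write $f^m(z)-\gamma = \omega\,F_m(z)^{\deg_i f^m}$ with $F_m$ separable, transfer valuations modulo $\ell$ through the power of $p$ (losing only a bounded contribution at finitely many primes), apply Lemma \ref{from-Kim} at $a=f^{n-m}(\alpha)$, and conclude via $h(f^{n-m}(\alpha)) = d^{n-m}h_f(\alpha)+O(1)$ with $\delta< r_1/d^m$. The only cosmetic difference is that the paper absorbs the exceptional primes into a ``$-h(\omega)$'' term while you fold them into an enlarged $S$ and the additive constant $r_2$; both are equivalent.
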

\begin{proof}
  Let $S$ be finite set of primes such that $\alpha$, $\gamma$, and all the
  coefficients of $f$ are in $\fo_{K,S}$.  Then $f^n(\alpha) \in
\fo_{K,S}$ for all $m$.  By Theorem \ref{non-curve}, there is an $m$ such that
the curve given by 
\[ y^\ell = \prod_{\substack{ \gamma \in \Kbar \\ f^m(\gamma) =
      \beta}}(x- \beta) \]
is not an isotrivial curve.    There is an $\omega \in K$ (the
leading term of $f^m(z) - \gamma$) and an $e$ (coming from the degree
of inseparability of $f^\ell$) such that
\[  f^m(z) - \gamma = \omega \prod_{\substack{ \gamma \in \Kbar \\ f^m(\gamma) =
      \beta}}(z- \beta)^{p^e}. \]
Let 
\[ F(z) = \prod_{\substack{ \gamma \in \Kbar \\ f^m(\gamma) =
      \beta}}(z- \beta). \]

Applying Lemma \ref{from-Kim} with $a = f^{n-m}(\alpha)$ we see that
since $\ell \not= p$, we have constants $r_1, r_2$ such that
\begin{equation*}
  \sum_{\substack{v_{\fp}(f^n(\alpha) - \gamma) > 0 \\ \ell \nmid
      v_{\fp}(f^n(\alpha) - \gamma)}} N_\fp \geq
 \left( \sum_{\substack{v_{\fp}(F(a)) > 0 \\ \ell \nmid
      v_{\fp}(F(a))}} N_\fp \right) - h(\omega)
     \geq r_1 h(f^{n-m}(\alpha))+r_2  - h(\omega).
 \end{equation*}
Since $|h_f - h| \leq O(1)$ and $h_f(f^{n-m}(\alpha)) = d^{n-m}
h_f(\alpha)$, we see that there is a constant $r_3$ such that
\[ \sum_{\substack{v_{\fp}(f^n(\alpha) - \gamma) > 0 \\ \ell \nmid
      v_{\fp}(f^n(\alpha) - \gamma)}} N_\fp \geq r_1 d^{n-m}
  h_f(\alpha) + r_3 \]
for all $n$.  Choosing a $\delta$ such that $0 < \delta < r_1/d^m$
then gives
\[ \sum_{\substack{v_{\fp}(f^n(\alpha) - \gamma) > 0 \\ \ell \nmid
      v_{\fp}(f^n(\alpha) - \gamma)}} N_\fp \geq \delta d^n
  h_f(\alpha) \]
for all sufficiently large $n$, as desired.
  \end{proof}

We are now ready to prove Proposition \ref{most-gen}.

\begin{proof}[Proof of Proposition \ref{most-gen}]

We first note it suffices to prove this after passing to a finite
extension of $K$ since $\ell \not= p$.  To see this, let $L$ be a finite extension of $K$,
let $L^s$ denote the separable closure of $K$ in $L$, and let $\fq$ be
a prime in $L$ lying over a prime $\fp$ of $K$. Then
$v_\fq(f^n(\alpha) - \beta) =  [L:L^s] v_\fp(f^n(\alpha) -
\beta)$ unless $\fp$ is in the finite set of primes of $K$ that ramify
in $L^s$.  We also note that $h_f(\alpha) > 0$ since $\alpha$ is not
preperiodic and $f$ is not isotrivial, by \cite[Corollary
    1.8]{Baker1}.    

We change coordinates so that $\beta = 0$.  Let $r$ be the smallest
positive integer such that $f^r(\gamma) = 0$.
After passing to a finite extension we may assume that all the roots
of $f^r(z)$ are in $K$.  Let $e = e_{f^r}(\gamma/\beta)$
 and write
\begin{equation*}
f^r(z) = (z-\gamma)^e g(z).
\end{equation*}
 Then for all but finitely many primes $\fp$ of $K$ we have 
\begin{equation}\label{e}
 v_\fp(f^{n+r}(\alpha)) = e v_\fp(f^{n}(\alpha) - \gamma) 
\end{equation}
for all $n$.

Since $\gamma$ is not post-critical, by Lemma \ref{delta}, there exists $\delta >
  0$ such that for all sufficiently
  large $n$, we have
  \begin{equation}\label{d}
  \sum_{\substack{v_{\fp}(f^n(\alpha) - \gamma) > 0 \\ \ell \nmid
      v_{\fp}(f^n(\alpha) - \gamma)}} N_\fp \geq \delta  d^n h_f(\alpha).
\end{equation}

Let $\cW$ be the roots of $f^r(z)$ that are not roots of $f^{r'}(z)$
for any $r' < r$.  Let $\cS_1$ be the set of primes of bad reduction
for $f$ and let $\cS_2$ be the set of primes such that $v_\fp(f^{r'}(w))
> 0$ for some $r' < r$ and some $w \in \cW \cup \{ \alpha \}$.   Now, for each $n$, let $\cY(n)$ be set of
primes $\fp$ such that $v_\fp(f^n(\alpha) - \gamma) > 0$ and
$v_\fp(f^{n'}(\alpha)) > 0$ for some $n' < n + r$.   If $\fp \notin
\cS_1 \cup \cS_2$, then
$v_\fp(f^m(\alpha)) - \gamma') > 0$ for some $\gamma' \in \cW$ and some $m < n$.  Thus,
since $\gamma$ is not in the forward orbit of any
element of $\cW$ and the sets $\cW$, $\cS_1$, and $\cS_2$ are all finite, we may apply Lemma
\ref{old} to each element of $\cW$.  We obtain
\begin{equation}\label{Y}
  \sum_{\fp \in \cY(n)} N_\fp \leq \frac{\delta}{2} d^n h_f(\alpha)
\end{equation}
for all sufficiently large $n$.   Combining \eqref{Y} with  \eqref{e}
and \eqref{d}, we see that for all sufficiently large $n$, there is a
prime $\fp$ such that
\begin{itemize}
\item $v_{\fp}(f^n(\alpha) - \gamma) > 0$;
\item $\ell \nmid
      v_{\fp}(f^n(\alpha) - \gamma)$; 
    \item $v_\fp(f^{n'}(\alpha)) = 0$ for all $0 < n' < n$; and
      \item $v_\fp(f^{n+r}(\alpha)) = e v_\fp(f^{n}(\alpha) - \gamma)$.
 \end{itemize}
Since $e$ is prime to $\ell$, it follows that the Zsigmondy set $\cZ(f, \alpha, \beta, \ell )$ is
   finite. 
\end{proof} 

\section{Applications}\label{applications}

The original Zsigmondy theorem \cite{Bang,Zsigmondy} had to do with
orders of algebraic numbers modulo primes.  We can treat a related
dynamical problem; here we will not assume non-isotriviality.  We begin with some notation and terminology.  If $\alpha \in K$ is an integer at a
prime $\fp$, we let $\alpha_\fp \in k_\fp$ be its reduction at $\fp$.
If $f \in K[x]$, and all of the coefficients of $f$ are integers at
$\fp$, we let $f_\fp \in k_\fp$ be the reduction of $f$ at $\fp$
obtained by reducing each coefficient of $f$ at $\fp$.  if
$g: \cU \lra \cU$ is any map from a set to itself and $u \in \cU$ is
periodic under $g$, then the {\bf prime period} of $u$ for $g$ the
smallest positive integer $m$ such that $g^m(u) = u$.  We say that a
polynomial in $f \in K[x]$ is additive if
$f(\alpha + \beta) = f(\alpha) + f(\beta)$ for all
$\alpha, \beta \in \Kbar$.

\begin{thm}
  Let $f$ be a polynomial of degree greater than 1 and let
  $\alpha \in K$ be a point that is not preperiodic for $f$.  If $f$
  is not both isotrivial and additive, then for all but finitely many
  positive integers $n$, there is a prime $\fp$ such that the prime period
  of $\alpha_\fp$ for $f_\fp$ is equal to $n$.  If $f$ is isotrivial
  and additive, then there for all but finitely many positive integers
  $n$ that are not a power of $p$, there is a $\fp$ such that
  the prime period of $\alpha_\fp$ for $f_\fp$ is equal to $n$
\end{thm}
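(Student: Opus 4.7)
The plan is to convert the prime-period condition into a primitive-divisor condition and then apply Theorem~\ref{Z2} in the non-isotrivial case, reducing the isotrivial cases to pure dynamics over $\Fpbar$. Let $T$ be the finite set of places of $K$ consisting of the primes of bad reduction for $f$ together with those at which $\alpha$ is not integral. For $\fp\notin T$, the reductions $f_\fp$ and $\alpha_\fp$ are well defined, and the prime period of $\alpha_\fp$ under $f_\fp$ equals $n$ if and only if $v_\fp(f^n(\alpha)-\alpha)>0$ and $v_\fp(f^m(\alpha)-\alpha)\le 0$ for all $1\le m<n$; that is, if and only if $\fp$ is a primitive prime divisor of the sequence $\{f^n(\alpha)-\alpha\}_{n\ge 1}$ at index $n$. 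Thus, modulo $T$, the assertion that some $\fp$ has prime period $n$ is equivalent to $n\notin\cZ(f,\alpha,\alpha)$.

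For the non-isotrivial case, take $\beta=\alpha$ in Theorem~\ref{Z2}. Since $\alpha$ is not preperiodic, it is neither exceptional for $f$ (exceptional points have finite total orbit and are therefore preperiodic) nor in $O^+_f(\alpha)$ (otherwise $\alpha$ would itself be periodic), so Theorem~\ref{Z2} yields that $\cZ(f,\alpha,\alpha)$ is finite, closing this case.

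For the isotrivial case, I would conjugate $f$ by an affine map (any conjugating $\sigma$ must fix $\infty$, since $f$ has a totally ramified fixed point there) and pass to a finite extension $K'/K$ so that $f\in\mathbb{F}_q[z]$ for some $\mathbb{F}_q\subset k'=K'\cap\Fpbar$. Non-preperiodicity of $\alpha$ forces $\alpha\notin\Fpbar$ (otherwise its orbit would lie in a finite field), so $\alpha:B'\to\bP^1$ is a nonconstant $k'$-morphism whose geometric image is all of $\bP^1_{\Fpbar}$. For any $\gamma\in\Fpbar$ of prime period exactly $n$ under $f$, after enlarging $K'$ if necessary so that $\gamma\in k'$, there is a place $\fp$ of $K'$ with $\alpha(\fp)=\gamma$; since each $f^m(\gamma)-\gamma$ for $1\le m<n$ is a nonzero element of $\Fpbar$ and hence a $\fp$-adic unit, $\alpha_\fp=\gamma$ has prime period exactly $n$ under $f_\fp$. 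The problem therefore reduces to an existence statement for prime-period-$n$ points of $f$ in $\Fpbar$.

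For non-additive isotrivial $f$, a counting argument---the separable degree of $f^n-\mathrm{id}$ grows like $d_s^n$ with $d_s=\deg_s f\ge 2$, while the total number of roots of $f^m-\mathrm{id}$ for $m<n$ is $O(d_s^{n-1})$---yields prime-period-$n$ points in $\Fpbar$ for all sufficiently large $n$. For additive isotrivial $f$, one works in the twisted polynomial ring $\Fpbar\{F\}$ (with $F$ the Frobenius, so $Fa=a^pF$); writing $f=P(F)$, the operator $P(F)^{p^k}-1$ can degenerate to a pure $p$-th-power, as in the canonical example $f(z)=z^p+z$, where $(F+1)^p=F^p+1$ gives $f^p-\mathrm{id}=z^{p^p}$, whose only root is the origin (of period $1$), eliminating prime-period-$p^k$ points. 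For $n$ not a power of $p$, a primary decomposition in $\Fpbar\{F\}$ shows that $\ker(f^n-\mathrm{id})$ strictly contains $\bigcup_{m\mid n,\, m<n}\ker(f^m-\mathrm{id})$, producing the required prime-period-$n$ roots. The main obstacle is this final structural analysis in $\Fpbar\{F\}$, which must verify that powers of $p$ are precisely the periods that can fail in the additive case.
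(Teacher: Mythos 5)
Your argument for the non-isotrivial case is exactly the paper's: translate the prime-period condition into a primitive-divisor condition on $\{f^n(\alpha)-\alpha\}_{n\ge 1}$, then apply Theorem~\ref{Z2} with $\beta=\alpha$, after observing that non-preperiodicity of $\alpha$ rules out the excluded hypotheses ($\alpha$ exceptional, $\alpha\in O^+_f(\alpha)$). Your reduction of the isotrivial case is also the paper's: conjugate so that $f\in\Fpbar[z]$, note that non-preperiodicity forces $\alpha\notin\Fpbar$ so that the corresponding section of $\bP^1$ over the base curve is dominant and hence surjective on algebraic points, and thereby reduce to the existence of prime-period-$n$ points of $f$ in $\Fpbar$.

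The gap is precisely the one you flag at the end. The existence of prime-period-$n$ points in $\Fpbar$ for all sufficiently large $n$ (non-additive case), respectively for all sufficiently large $n$ not a power of $p$ (additive case), is a nontrivial theorem, and the paper does not reprove it: it cites Pezda \cite{Pezda}. Your sketch for the non-additive case is not yet a proof --- the polynomial $f^n(x)-x$ need not be separable, and passing to a notion of ``separable degree of $f^n-\mathrm{id}$'' and then comparing counts of distinct roots requires exactly the kind of analysis that Pezda's paper carries out; the additive case you explicitly leave open, and that is where the delicate interaction of $\Fpbar\{F\}$ with powers of $p$ lives (your example $f(z)=z^p+z$, with $f^p-\mathrm{id}=F^p$, correctly illustrates why powers of $p$ must be excepted). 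The efficient repair is to cite Pezda's theorem at this point, as the paper does, rather than attempt to re-derive it.
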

\begin{proof}
  If $f$ is not isotrivial, this follows immediately from Theorem
  \ref{Z2} by letting $\alpha = \beta$.  If $f$ is
  isotrivial, then after a change of coordinates, we may assume that
  $f \in k[x]$ and $\alpha \in K \setminus k$ for some finite
  extension $k$ of $\bF_p$.  If $f$ is not additive
  then for all but finitely many positive integers $n$, there exists
  $\beta_n \in k$ having prime period $n$ for $f$, by
  \cite[Theorem]{Pezda}.  For each such $\beta_n$, there exists $\fp_n$
  such that $\alpha_{\fp_n} = \beta_n$, so we see that for all but all
  but finitely many positive integers $n$, there exists $\fp$ such that
  $\alpha_\fp$ for $f_\fp$ is equal to $n$.  If $f$ is additive, then
  for all but finitely many positive integers $n$ that are not a power
  of $p$, there exists $\beta_n \in k$ having prime period $n$ for $f$,
  by \cite[Theorem]{Pezda}.  Then, as in the non-additive case, we may
  choose $\fp_n$ such that $\alpha_{\fp_n} = \beta_n$.
\end{proof}

Theorem \ref{main} allows one to prove characteristic $p$ analogs of
various results that rely on the results of \cite{SilInt}.  For
example, the proofs of Theorems 4 and 5 of \cite{Scanlon} extend
easily to the case of non-isotrivial rational functions over a
function field in characteristic $p$, using Theorem \ref{main}.
Similarly, one can use Theorem \ref{main} to prove Theorem 4 of
\cite{BIJJ} with the additional hypothesis that at least one of the wandering
critical points of $\varphi$ has a ramification degree that is not a
power of $p$.

We will now prove a few results that about unicritical polynomials
that rely on Theorem \ref{Z1}, which is not available over number
fields.

The following lemma is very similar to \cite[Proposition 3.1]{BT2}; we
include the proof for a sake of completeness.   

\begin{lem}\label{U}
  Let $f(x) = x^d + c$ where $d$ is an integer greater than than 1
  that is not divisible by $p$, let $\beta \in K$, and let $n$ be a
  positive integer.  Let $\fp$ be any prime of $K$ such that
  \begin{itemize}
  \item[(i)] $|c|_\fp \leq 1$;
  \item[(ii)] $|\beta|_\fp \leq 1$; and
  \item[(iii)] $|f^m(0)|_\fp = 1$ for all $0 \leq m \leq n$.
  \end{itemize}
Then $\fp$ does not ramify in $K(f^{-n}(\beta))$. 
  \end{lem}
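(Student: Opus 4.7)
The plan is to deduce the non-ramification of $\fp$ in $K(f^{-n}(\beta))$ from an explicit closed form for the discriminant of the monic polynomial $g_n(x) := f^n(x) - \beta$ in $K[x]$, and then to show that under the hypotheses (i)--(iii) this discriminant is a unit at $\fp$. Since the coefficients of $g_n$ are polynomial expressions in $c$ and $\beta$, both of which are $\fp$-integral by (i) and (ii), we have $g_n \in \fo_\fp[x]$; once $\mathrm{disc}(g_n) \in \fo_\fp^\times$, the reduction of $g_n$ modulo $\fp$ is separable over the residue field. A standard fact about local extensions (the minimal polynomial of a root divides $g_n$ in $\fo_\fp[x]$, so its discriminant divides that of $g_n$) then gives that every root of $g_n$ generates an extension of $K$ unramified at $\fp$, so $\fp$ is unramified in the compositum $K(f^{-n}(\beta))$.

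To obtain the closed form for $\mathrm{disc}(g_n)$, I would begin from the chain rule applied to $f'(x) = d x^{d-1}$. For any root $\alpha$ of $g_n$,
\[
(f^n)'(\alpha) \;=\; \prod_{k=0}^{n-1} f'\bigl(f^k(\alpha)\bigr) \;=\; d^n\,\prod_{k=0}^{n-1}\bigl(f^k(\alpha)\bigr)^{d-1}.
\]
Using $\mathrm{disc}(g_n) = \pm\prod_{\alpha \in f^{-n}(\beta)}(f^n)'(\alpha)$ and interchanging the two products, the key observation is that for each fixed $0 \le k \le n-1$ the map $\alpha \mapsto f^k(\alpha)$ sends $f^{-n}(\beta)$ onto $f^{-(n-k)}(\beta)$ with every fiber of cardinality exactly $d^k$, and $\prod_{\gamma \in f^{-(n-k)}(\beta)}\gamma$ equals, up to sign, the constant term $f^{n-k}(0)-\beta$ of the monic polynomial $f^{n-k}(x)-\beta$. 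After reindexing by $j := n-k$, this produces
\[
\mathrm{disc}(g_n) \;=\; \pm\, d^{\,n\cdot d^n}\,\prod_{j=1}^{n}\bigl(f^j(0)-\beta\bigr)^{(d-1)\,d^{n-j}}.
\]

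The final step is then to verify that each factor on the right is a $\fp$-unit. Since $p \nmid d$, the constant $d^{\,n\cdot d^n}$ is a unit. For each $1 \le j \le n$, hypotheses (ii) and (iii) together give $|f^j(0)-\beta|_\fp = 1$: from $|f^j(0)|_\fp = 1$ and $|\beta|_\fp \le 1$ the non-archimedean strong triangle inequality forces equality in the difference (with (iii) read as ensuring $\beta$ is not congruent to any $f^j(0)$ modulo $\fp$), so each factor is a unit. The main obstacle I anticipate is the combinatorial bookkeeping in the discriminant identity: correctly tracking the $d^k$-fold fiber multiplicities under iteration and correctly reading off the constant term of each $f^{n-k}(x)-\beta$. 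Once the closed form is established, the remainder of the argument is immediate from the hypotheses.
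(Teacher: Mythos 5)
Your proof is correct and takes a genuinely different route from the paper. The paper argues by induction on $n$: once $\fp$ is known to be unramified in $K(f^{-(n-1)}(\beta))$, the extension $K(f^{-n}(\beta))$ is built by adjoining $d$-th roots $\sqrt[d]{\gamma_i - c}$ for $\gamma_i \in f^{-(n-1)}(\beta)$, and at each stage one only has to check that a single degree-$d$ binomial $x^d + (c - \gamma_i)$ has unit discriminant at the primes $\fq$ above $\fp$. You instead compute the discriminant of $f^n(x)-\beta$ in one closed form, and your identity
\[
\mathrm{disc}\bigl(f^n(x)-\beta\bigr) \;=\; \pm\, d^{\,n d^n}\prod_{j=1}^{n}\bigl(f^j(0)-\beta\bigr)^{(d-1)d^{\,n-j}}
\]
is correct: the fiber-counting you were worried about works exactly as you describe, since $f$ is separable ($p\nmid d$) so $f^k$ maps $f^{-n}(\beta)$ onto $f^{-(n-k)}(\beta)$ with all fibers of size $d^k$, and $\prod_{\gamma\in f^{-(n-k)}(\beta)}\gamma$ is $\pm$ the constant term of the monic $f^{n-k}(x)-\beta$. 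The paper's induction keeps the degree bounded by $d$ at each step; your route produces a single criterion that makes transparent exactly which primes can ramify. Both arguments ultimately need the same numerical input, namely that each $f^j(0)-\beta$ is a $\fp$-unit for $1\le j\le n$. As you noticed in your parenthetical, hypothesis (iii) as written does not literally give this (indeed, with $m=0$ the condition $|f^0(0)|_\fp=1$ is never satisfiable); the hypothesis the lemma really wants, and the one under which it is invoked in Theorem~\ref{Z-ram}, is $|f^m(0)-\beta|_\fp=1$. Your first justification of the unit claim at the end --- deducing $|f^j(0)-\beta|_\fp=1$ from $|f^j(0)|_\fp=1$ and $|\beta|_\fp\le1$ via the ultrametric inequality --- does not work when $|\beta|_\fp=1$; drop it and keep only the parenthetical reading of (iii), which is the correct one and which your discriminant formula then immediately converts into the conclusion.
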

  \begin{proof}
    We proceed by induction.  The case where $n=1$ follows immediately
    from taking the discriminant of $x^d + (c - \beta)$.  Now, let
    $\fp$ be a prime satisfying (i) -- (iii) for some $n \geq 2$.
    Then it also satisfies them for $n-1$, so by the inductive
    hypothesis, the prime $\fp$ does not ramify in
    $K(f^{-(n-1)}(\beta))$.  Now, $K(f^{-n}(\beta))$ is obtained from
    $K(f^{-(n-1)}(\beta))$ by adjoining elements of the form $\sqrt[d]{c -
      \gamma_i}$ for $f^{n-1}(\gamma_i) = \beta$.   For any prime
    $\fq$ in $K(f^{-(n-1)}(\beta))$ lying over $\fp$, we see that
    $|\gamma_i|_\fq \leq 1$ by (i) and (ii) and $|\gamma_i| \geq 1$ by
    (i), (ii), and (iii).  Thus, each $\fq$ in $K(f^{-(n-1)}(\beta))$
    lying over $\fp$ does not ramify in any $K(f^{-(n-1)}(\beta)) (\sqrt[d]{c -
      \gamma_i})  = K(f^{-n}(\beta))$.
    Since each such $\fq$ does not ramify over $\fp$ by the inductive
    hypothesis, it follows that $\fp$ does not ramify in
    $K(f^{-n}(\beta))$, as desired. 
    
\end{proof}

The next lemma follows a proof that is similar to that of
\cite[Proposition 3.2]{BT2} and \cite[Theorem 5]{BIJJ}.  

\begin{lem}\label{R}
  Let $f(x) = x^d + c$ where $c \in K \setminus k$ where $d$ is an
  integer greater than than 1 that is not divisible by $p$.  Let
  $\beta \in K$, let
  $\ell \not= p$ be a prime number, and let $e$ be a positive integer
  such that $\ell^e$ divides $d$.  Suppose that $\fp$ is a primitive
  $\ell$-divisor of $f^n(0) - \beta$ such that
  $|c|_\fp = |\beta|_\fp = 1$.  Then for any prime $\fp'$ in
  $K(f^{-(n-1)}(\beta))$ that lies over $\fp$, there is a prime $\fq$
  in $K(f^{-n}(\beta))$ such that $\ell^e$ divides $e(\fq/\fp')$.
\end{lem}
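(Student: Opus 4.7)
The plan is to use Lemma \ref{U} to kill ramification in the intermediate field $L := K(f^{-(n-1)}(\beta))$, then factor $f^n(0) - \beta$ to single out one preimage of $\beta$ under $f^{n-1}$ whose $\fp'$-adic valuation is not divisible by $\ell$, and finally run a tame Kummer-style argument on the top $d$th-root extension.

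First I would observe that since $|c|_\fp = |\beta|_\fp = 1$ and every $f^m(0)$ is $\fp$-integral by induction on $m$, the primitive-divisor condition $v_\fp(f^m(0) - \beta) \le 0$ for $m < n$ combined with the ultrametric inequality actually forces $v_\fp(f^m(0) - \beta) = 0$ for all $0 \le m < n$. This puts us in position to invoke Lemma \ref{U} with $n-1$ in place of $n$ and to conclude that $e(\fp'/\fp) = 1$ for every prime $\fp'$ of $L$ above $\fp$. Next, starting from the factorization $f^n(x) - \beta = \prod_{\gamma \in f^{-(n-1)}(\beta)}(x^d + c - \gamma)$ and evaluating at $x = 0$, I would deduce
\[
v_\fp\bigl(f^n(0) - \beta\bigr) \;=\; \sum_{\gamma \in f^{-(n-1)}(\beta)} v_{\fp'}(\gamma - c),
\]
using that $e(\fp'/\fp) = 1$. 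Each summand is non-negative since every $\gamma$ is a root of the monic polynomial $f^{n-1}(x) - \beta \in \fo_\fp[x]$, and because the left-hand side is not divisible by $\ell$, at least one summand $v_{\fp'}(\gamma_0 - c)$ must fail to be divisible by $\ell$; in particular it is strictly positive.

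For the final step, I would pick $\eta$ with $\eta^d = \gamma_0 - c$. Then $f(\eta) = \gamma_0 \in f^{-(n-1)}(\beta)$, so $\eta \in f^{-n}(\beta)$ and $L(\eta) \subseteq K(f^{-n}(\beta))$. Setting $N := v_{\fp'}(\gamma_0 - c)$, the hypothesis $p \nmid d$ makes the local extension $L_{\fp'}(\eta)/L_{\fp'}$ tame, and a standard tame-Kummer calculation --- splitting off the $d$th root of a unit (which contributes only an unramified piece) from the $d$th root of a uniformizer power --- will give $e(\fq/\fp') = d/\gcd(d, N)$ for any prime $\fq$ of $L(\eta)$ above $\fp'$. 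Since $\ell^e \mid d$ and $\ell \nmid N$ force $\ell \nmid \gcd(d, N)$, the factor $\ell^e$ survives in the quotient, so $\ell^e \mid e(\fq/\fp')$. Any prime $\fq'$ of $K(f^{-n}(\beta))$ above $\fq$ then inherits the divisibility by multiplicativity of ramification indices.

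The hard part will be the last tame-Kummer computation, which is textbook when $L$ contains a primitive $d$th root of unity but needs some care when it does not. I plan to handle this by base-changing to the maximal unramified extension of $L_{\fp'}$, which automatically contains all roots of unity of order prime to $p$ since its residue field is the algebraic closure of the residue field of $\fp'$; performing the computation there, I would then descend using that the ramification index of a tame extension is unchanged under unramified base change, so the value $d/\gcd(d, N)$ transfers back to $L_{\fp'}(\eta)/L_{\fp'}$.
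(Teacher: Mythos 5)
Your proof is correct and follows essentially the same route as the paper's: apply Lemma~\ref{U} to force $\fp$ to be unramified in $K(f^{-(n-1)}(\beta))$, factor $f^n(0)-\beta=\prod_{f^{n-1}(\gamma)=\beta}(c-\gamma)$ to isolate a preimage $\gamma_0$ with $\ell\nmid v_{\fp'}(c-\gamma_0)$, and extract $\ell^e\mid e(\fq/\fp')$ from the degree-$d$ Kummer layer $\sqrt[d]{c-\gamma_0}$ using that $p\nmid d$. The paper states this last step as a bare assertion; your tame-Kummer computation over the maximal unramified extension of $L_{\fp'}$, giving $e(\fq/\fp')=d/\gcd(d,N)$, is the standard way to justify it.
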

\begin{proof}
  Let $\fp'$ be a prime in $K(f^{-(n-1)}(\beta))$ lying over $\fp$.
  By Lemma \ref{U}, the prime $\fp$ does not ramify in
  $K(f^{-(n-1)}(\beta))$, so $v_{\fp'}(z) = v_\fp(z)$ for all $z \in
  K$. Since $f^n(0) - \beta$ = $\prod_{f^{n-1}(\gamma) = \beta} f(0) -
  \gamma$, we see that there is some $\gamma \in f^{-(n-1)}(\beta)$
  such that $\ell \nmid v_{\fp'}(c - \gamma)$.  Thus, if $\fq$ is a prime of
  $K(f^{-(n-1)}(\beta))(\sqrt[d]{c - \gamma})$ lying over $\fp'$, we
  see that $\ell^e | e(\fq/\fp')$.  
\end{proof}

Using the Lemmas above, we can prove a result for separable non-isotrivial
polynomials of the form $x^d + c$ that is a special case of a
characteristic $p$ analog of \cite[Theorem 1.1]{BT2}.  Note that if
$f(x) = x^d + c$ and $d$ is not divisible by $p$, then $f$ is isotrivial
if and only if $c \in \Fpbar$.  To see this, note that
$h_f(0) = \frac{h(c)}{d} > 0$ when $c \notin \Fpbar$, as can be seen
by simply considering the orbit of $f$ at the places $v$ where $|c|_v > 1$.
Therefore, if $c \notin \Fpbar$, then $f$ has a critical point that is
not preperiodic, and hence $f$ cannot be isotrivial.  We note also
that a polynomial of the form $x^d + c$ is separable if and only if $p
\nmid d$.  

 \begin{thm}\label{Z-ram}
   Let $f(x) = x^d + c$ be a separable non-isotrivial polynomial of
   degree $d  > 1$.  Let $\beta \in K$.
   Then for all sufficiently large $n$, there is a prime $\fp$ of $K$
   such that $\fp$ ramifies in $K(f^{-n}(\beta))$ but not in
   $K(f^{-(n-1)}(\beta))$.
\end{thm}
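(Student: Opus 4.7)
The plan is to use the Zsigmondy result Theorem \ref{Z1} to produce primitive $\ell$-divisors of $f^n(0) - \beta$, then apply Lemmas \ref{U} and \ref{R} to show that each such prime ramifies in $K(f^{-n}(\beta))$ but not in $K(f^{-(n-1)}(\beta))$. Fix a prime $\ell$ with $\ell \neq p$ and $\ell \mid d$; this exists since $d > 1$ and $p \nmid d$ by the separability of $f$.

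First I would verify the hypotheses of Theorem \ref{Z1} with $\alpha = 0$, assuming for now that $\beta \notin O^+_f(0)$. Since $f(x) = x^d + c$ has $0$ as its only affine critical point, ``$\beta$ not post-critical'' coincides here with $\beta \notin O^+_f(0)$. Moreover $0$ is not preperiodic: as noted in the paragraph preceding the theorem, non-isotriviality forces $c \notin \Fpbar$, whence $h_f(0) = h(c)/d > 0$. Thus Theorem \ref{Z1} gives that $\cZ(f, 0, \beta, \ell)$ is finite, so for all sufficiently large $n$ there is a prime $\fp$ of $K$ that is a primitive $\ell$-divisor of $f^n(0) - \beta$.

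The second step is to observe that for $n$ sufficiently large, any such $\fp$ automatically satisfies $|c|_\fp \leq 1$ and $|\beta|_\fp \leq 1$: at a prime with $|c|_\fp > 1$ one has $|f^n(0)|_\fp = |c|_\fp^{d^{n-1}}$, so $v_\fp(f^n(0) - \beta) < 0$ for large $n$, contradicting primitivity; the case $|\beta|_\fp > 1$ is similar. Only finitely many primes violate these bounds, so they can simply be avoided. Combined with primitivity, which gives $v_\fp(f^m(0) - \beta) \leq 0$ for all $m < n$, this supplies the valuation hypotheses needed by Lemma \ref{U} at level $n - 1$, and hence $\fp$ is unramified in $K(f^{-(n-1)}(\beta))$. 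On the other hand, Lemma \ref{R} with $e = 1$ (using $\ell \mid d$) produces a prime $\fq$ of $K(f^{-n}(\beta))$ above some $\fp' \supset \fp$ in $K(f^{-(n-1)}(\beta))$ with $\ell \mid e(\fq/\fp')$, so $\fp$ ramifies in $K(f^{-n}(\beta))$. This is precisely the prime demanded by the theorem.

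The main delicate point is the bookkeeping needed to extract Lemma \ref{U}'s precise valuation hypotheses from the primitive-divisor property of $\fp$; once arranged, Lemmas \ref{U} and \ref{R} dovetail cleanly. The remaining case $\beta \in O^+_f(0)$, in which Theorem \ref{Z1} does not apply directly, requires a short separate argument, for example by choosing a preimage $\beta' \in f^{-1}(\beta) \setminus O^+_f(0)$, running the main case for $\beta'$, and then transferring via the inclusion $K(f^{-(n-1)}(\beta')) \subseteq K(f^{-n}(\beta))$ together with a parallel non-ramification check in $K(f^{-(n-1)}(\beta))$.
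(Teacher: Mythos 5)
Your main argument matches the paper's proof essentially line for line: both fix a prime $\ell \mid d$ with $\ell \neq p$, apply Theorem \ref{Z1} with $\alpha = 0$ to produce a primitive $\ell$-divisor $\fp$ of $f^n(0) - \beta$ for each large $n$, discard the finitely many primes at which $|c|_\fp > 1$ or $|\beta|_\fp > 1$ (valid since a fixed prime can be a primitive divisor for at most one $n$), and then invoke Lemma \ref{U} for non-ramification in $K(f^{-(n-1)}(\beta))$ and Lemma \ref{R} for ramification in $K(f^{-n}(\beta))$. The observations about $h_f(0) > 0$ and $0$ being the only finite critical point are also exactly the paper's.

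Where you diverge is in flagging the hypothesis $\beta \notin O^+_f(0)$ (equivalently ``$\beta$ not post-critical'' for separable $x^d+c$) needed by Theorem \ref{Z1}. This is a real issue that the paper's proof glosses over: it only remarks that $\beta$ is not exceptional, which is the hypothesis of Theorem \ref{Z2}, not the post-critical hypothesis that Theorem \ref{Z1} actually requires, and Theorem \ref{Z-ram} as stated allows arbitrary $\beta\in K$. However, your sketched repair does not yet close the gap. If $\beta = f^m(0)$ with $m \geq 2$, the elements of $f^{-1}(\beta)$ are $\zeta^j f^{m-1}(0)$ for $\zeta$ a primitive $d$-th root of unity, and the only one guaranteed to lie in $K$ is $f^{m-1}(0)$, which is again in $O^+_f(0)$; for $d$ even (hence $p$ odd) you also get $-f^{m-1}(0)\in K$, but for $d$ odd with $\zeta\notin K$ no admissible $K$-rational $\beta'\in f^{-1}(\beta)$ may exist. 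Taking $\beta' = 0 \in f^{-m}(\beta)$ is always $K$-rational and non-post-critical (since $0$ is not preperiodic), but then the transfer step is no longer a one-line inclusion: the prime $\fp$ produced for $\beta'$ satisfies $v_\fp(f^{n-m}(0)) > 0$, which is precisely the kind of positivity that interferes with the hypotheses of Lemma \ref{U} when you try to run it for the larger field $K(f^{-(n-1)}(\beta))$ rather than for $K(f^{-(n'-1)}(\beta'))$. So you have correctly localized a subtlety that the paper also elides, but ``a short separate argument'' understates what is still needed; as written, the post-critical case remains open in your proposal.
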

\begin{proof}
  Note that $\beta$ cannot be exceptional since $c \not= 0$. 
  Let $\ell \not= p$ be a prime dividing $d$.  By Theorem \ref{Z1},
  for all sufficiently large $n$, there is a prime $\fp$ such that
  $v_\fp (f^n(0) - \beta) > 0$ with
  $\ell \nmid v_\fp (f^n(0) - \beta)$ and $v_\fp (f^m(0) - \beta) = 0$
  for all $0 < m < n$.  Since $|c|_\fp = |\beta|_\fp = 1$ for all but
  finitely many $\fp$ we may also suppose that $|c|_\fp = |\beta|_\fp
  = 1$.  Then, by Lemma \ref{U}, the prime $\fp$ does not ramify in
  $K(f^{-(n-1)}(\beta))$.  By Lemma \ref{R}, it does ramify in
  $K(f^{-n}(\beta))$.  
\end{proof}

The next result is a characteristic $p$ analog of a theorem of Pagano
\cite[Theorem 1.3]{Pagano} for number fields (see also \cite{Growth}
for a similar result); the growth condition here is stronger than what
Pagano obtains over number fields.
 
 \begin{thm}\label{growth}
   Let $f(x) = x^d + c$ be a separable non-isotrivial polynomial of
   degree $d > 1$.  Let $\beta \in K$.  Then there is a constant
   $C(n, \beta) > 0$ such that $[K(f^{-n}(\beta)): K] > C(n, \beta)d^n$ for all
   positive integers $n$.
 \end{thm}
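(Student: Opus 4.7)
The plan is to prove this via the Zsigmondy result Theorem \ref{Z1}, the ramification Lemmas \ref{U} and \ref{R}, together with a Lagrange-type group-theoretic argument, showing that for $n$ sufficiently large, passing from $K_{n-1}:=K(f^{-(n-1)}(\beta))$ to $K_n:=K(f^{-n}(\beta))$ multiplies the degree by at least $d$.

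First, I would reduce to the case $\beta \notin O^+_f(0)$. Since the unique critical point of $f(x)=x^d+c$ is $0$, this is precisely the condition that $\beta$ is not postcritical. If instead $\beta=f^m(0)$ for some $m\geq 1$, then $f^{-(n-m)}(0)\subseteq f^{-n}(\beta)$, giving $K(f^{-n}(\beta))\supseteq K(f^{-(n-m)}(0))$; since $0 \notin O^+_f(0)$ (because $h_f(0) = h(c)/d > 0$ for non-isotrivial $f$, as noted just before Theorem \ref{Z-ram}), the theorem for $\beta=0$ gives the general bound at the cost of the constant factor $1/d^m$.

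So assume $\beta\notin O^+_f(0)$, and apply Theorem \ref{Z1} with $\alpha=0$ to each prime $\ell\mid d$ (necessarily $\ell\neq p$, by separability of $f$). Each $\ell$-Zsigmondy set $\cZ(f,0,\beta,\ell)$ is finite, so choose $N$ large enough that for every $n\geq N$ and every $\ell\mid d$ there exists a primitive $\ell$-divisor $\fp_n^{(\ell)}$ of $f^n(0)-\beta$ at which, moreover, $|c|_{\fp_n^{(\ell)}}=|\beta|_{\fp_n^{(\ell)}}=1$ (excluding finitely many primes). Lemma \ref{U} then guarantees that $\fp_n^{(\ell)}$ is unramified in $K_{n-1}/K$, and Lemma \ref{R} produces a prime $\fq$ of $K_n$ with $e(\fq/\fp_n^{(\ell)})$ divisible by $\ell^{v_\ell(d)}$. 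Since $K_n/K$ is Galois (being a splitting field), the inertia subgroup $I^{(\ell)}\leq \Gal(K_n/K)$ at $\fq$ has order equal to this ramification index, and as $\fp_n^{(\ell)}$ is unramified in $K_{n-1}$, the subgroup $I^{(\ell)}$ lies inside $H_n:=\Gal(K_n/K_{n-1})$.

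Finally, combine the inertia data across all primes $\ell\mid d$. By Lagrange, $|H_n|$ is divisible by $|I^{(\ell)}|$ for every $\ell\mid d$, hence by $\ell^{v_\ell(d)}$; taking the product over the finitely many $\ell\mid d$, which are pairwise coprime, forces $d\mid |H_n|$, so $[K_n:K_{n-1}]\geq d$ for all $n\geq N$. Iterating yields $[K_n:K]\geq d^{n-N}[K_N:K]$, finishing the proof with $C=[K_N:K]/d^N$, adjusted downward to cover the finitely many $n<N$. The main conceptual step --- and the one that is not immediately obvious --- is the final coprime-orders combination: a single primitive $\ell$-divisor only forces a factor of $\ell^{v_\ell(d)}$, which can be strictly smaller than $d$, and it is essential that the contributions from distinct $\ell\mid d$ multiply (not merely take a maximum) inside the group $H_n$, which is precisely what the Lagrange argument at the level of the Galois group achieves.
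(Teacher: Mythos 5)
Your argument is, in substance, the paper's proof.  For each prime $\ell\mid d$ you invoke Theorem~\ref{Z1} with $\alpha=0$ to get, for all large $n$, a primitive $\ell$-divisor $\fp$ of $f^n(0)-\beta$ lying outside a fixed finite bad set; Lemma~\ref{U} shows $\fp$ is unramified in $K(f^{-(n-1)}(\beta))$, Lemma~\ref{R} produces a prime of $K(f^{-n}(\beta))$ above it with ramification index divisible by $\ell^{v_\ell(d)}$, and Galois-ness of $K(f^{-n}(\beta))/K$ lets you put the corresponding inertia group inside $\Gal\bigl(K(f^{-n}(\beta))/K(f^{-(n-1)}(\beta))\bigr)$.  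Multiplying over the pairwise coprime $\ell^{v_\ell(d)}$ gives $d\mid[K(f^{-n}(\beta)):K(f^{-(n-1)}(\beta))]$ for $n\ge N$ and hence the geometric lower bound.  Your inertia-subgroup/Lagrange paragraph is exactly what the paper compresses into the phrase ``Hence $\ell^e\mid[K(f^{-n}(\beta)):K(f^{-(n-1)}(\beta))]$.''

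The one point where you go beyond the paper is the opening reduction to $\beta\notin O^+_f(0)$, and you are right to make it.  Theorem~\ref{Z1} with $\alpha=0$ requires both that $\beta$ be non-post-critical and that $\beta\notin O^+_f(0)$, and for $f=x^d+c$ these conditions coincide (the finite critical point is $0$).  The paper's proof of Theorem~\ref{growth} --- and of Theorem~\ref{Z-ram}, on which it leans --- simply says ``Applying Theorem~\ref{Z1} \dots'' without checking this hypothesis, so when $\beta\in O^+_f(0)$ the cited theorem does not literally apply.  Your passage from $\beta=f^m(0)$ to $\beta'=0$ via $K(f^{-(n-m)}(0))\subseteq K(f^{-n}(\beta))$, together with the observation that $h_f(0)=h(c)/d>0$ forces $0\notin O^+_f(0)$, repairs this cleanly at the cost of absorbing $d^{-m}$ into the constant.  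That is a genuine, if small, improvement over the argument as written in the paper.
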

 \begin{proof}
   It will suffice to show that
   $d$ divides
   $[K(f^{-n}(\beta)): K(f^{-(n-1)}(\beta))]$ for all sufficiently
   large $n$.  Let $\ell$ be a prime such that $\ell^e | d$ for some
   $e > 0$.  Applying Theorem \ref{Z1} as in Theorem \ref{Z-ram}, we
   see that for all sufficiently large $n$, there is a prime $\fp$
   with the property $|c|_\fp = |\beta|_\fp = 1$ such that
   $v_\fp (f^n(0) - \beta) > 0$ with
   $\ell \nmid v_\fp (f^n(0) - \beta)$ and
   $v_\fp (f^m(0) - \beta) = 0$ for all $0 < m < n$.  The Lemma
   \ref{R} implies that for any prime $\fp'$ in $K(f^{-(n-1)}(\beta))$
   that lies over $\fp$, there is a prime $\fq$ in $K(f^{-n}(\beta))$
   such that $\ell^e$ divides $e(\fq/\fp')$.  Hence
   $\ell^e | [K(f^{-n}(\beta)): K(f^{-(n-1)}(\beta))]$.  Since this
   holds for any prime $\ell$ such that $\ell^e | d$ for some $e > 0$,
   it follows that $d | [K(f^{-n}(\beta)): K(f^{-(n-1)}(\beta))]$ for
   all sufficiently large $n$, and our proof is complete.
  \end{proof}
 
  We are can now prove a finite index result for iterated monodromy
  groups of quadratic polynomials.  We need a little terminology to
  state our result.    

  Let $L$ be a field.  Let $f$ be a quadratic polynomial and let
  $\beta\in \overline L$.  For $n\in\N$, let $L_n(f,\beta)=L(f^{-n}(\beta))$ be
  the field obtained by adjoining the $n$th preimages of $\beta$ under
  $f$ to $L(\beta)$.  and let
  $L_\infty(f,\beta)=\bigcup_{n=1}^\infty L_n(f,\beta)$.  We let
  $G_\infty(\beta)=\Gal(L_\infty(f,\beta)/L)$.  The group
  $G_\infty(\beta)$ embeds into $\Aut(T^2_\infty)$, the automorphism
  group of an infinite $2$-ary rooted tree $T^2_\infty$ (note that all
  of the definitions here generalize to rational functions of any degree 
  -- see \cite{OdoniIterates} or \cite{JKMT}, for example).  Boston and Jones
  \cite{BostonJonesArboreal} asked if $G_\infty(\beta)$ had finite
  index in $\Aut(T^2_\infty)$ whenever $f$ is not post-critically
  finite in the case where $L$ is a number field.  It was later shown
  \cite{JuulEtAl} that this is true if the pair $(f,\beta)$ is
  eventually stable (see below), assuming the $abc$ conjecture.  This
  was also shown to be true unconditionally for non-isotrivial
  quadratic polynomials over function fields of characteristic 0 in
  \cite{QuadFin}.

For $\beta\in {\overline L}$ and a polynomial $f \in L[x]$, the pair $(f,\beta)$ is said to be
{\bf eventually stable} if the number of irreducible factors of
$f^n(x)-\beta$ over $L(\beta)$ is bounded independently of $n$ as
$n\to\infty$ (stability and eventual stability can also be defined for
rational functions as in~\cite{RafeAlonEventualStability}).  We will
prove a finite index result for non-isotrivial quadratic polynomials
over function fields of odd positive characteristic under an eventual
stability assumption.

  The technique we use is the same as that
  used in \cite{QuadFin} (see also \cite{JuulEtAl, BT, HJ}).  We make use
  of \cite[Proposition 7.7]{QuadFin}, which is stated in
  characteristic 0 but is true with no changes in the proof in
  characteristic $p$ provided that $K(f^{-n}(\beta))$ is separable over
  $K$ for all $n$, which is automatic here when $p > 2$; the following result is a strengthening of \cite[Corollary 1]{London}. 

\begin{thm}\label{fin-index}
  Let $f$ be a non-isotrivial quadratic polynomial defined over a field
  $K$ that is a finite extension of $\Fp(t)$.  Suppose that $p>2$
  and that $\beta$ is not post-critical or periodic for $f$.  Suppose
  furthermore that the pair $(f,\beta)$ is eventually stable.  Then
  $G_\infty(\beta)$ has finite index in $\Aut(T^2_\infty)$.  
  \end{thm}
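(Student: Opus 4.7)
The plan is to reduce to a normal form for $f$, extract newly ramifying primes at each level of the preimage tower via the Zsigmondy-type results already proved, and then feed this into the arithmetic-to-geometric criterion of \cite[Proposition 7.7]{QuadFin}. Since $p > 2$, we may complete the square over $K$ to replace $f$ by a linearly conjugate polynomial of the form $f(x) = x^2 + c$, and this change of coordinates affects neither isotriviality nor $G_\infty(\beta)$ nor the post-critical/periodic hypotheses on $\beta$. Non-isotriviality of $f$ then forces $c \notin \Fpbar$ (otherwise $f$ would be defined over $\Fpbar$), and because $\deg f = 2$ is prime to $p$, the polynomial $f$ and all of its iterates are separable, so $K(f^{-n}(\beta))/K$ is a separable extension for every $n$. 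This separability is precisely the hypothesis that allows the proof of \cite[Proposition 7.7]{QuadFin}, originally stated in characteristic $0$, to transfer verbatim to our setting.

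The arithmetic input is furnished by Theorem \ref{Z-ram}: applied to $f(x) = x^2 + c$ and $\beta$, it produces, for every sufficiently large $n$, a prime $\fp$ of $K$ that ramifies in $K(f^{-n}(\beta))$ but not in $K(f^{-(n-1)}(\beta))$. In particular, a new prime ramifies at level $n$ of the preimage tower for every $n \gg 0$. The hypotheses that $\beta$ is not post-critical and not periodic are exactly what is needed in the chain Theorem \ref{Z-ram} $\Rightarrow$ Theorem \ref{Z1} $\Rightarrow$ Proposition \ref{most-gen} to guarantee primitive $\ell$-divisors in the orbit sequence.

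With the eventual stability of $(f,\beta)$ in hand, the criterion of \cite[Proposition 7.7]{QuadFin} says that $[\Aut(T^2_\infty) : G_\infty(\beta)]$ is finite provided that the ramification in the tower $K \subset K(f^{-1}(\beta)) \subset K(f^{-2}(\beta)) \subset \cdots$ is sufficiently rich: at every level $n$ beyond some fixed bound, some prime newly ramifies. Combining the separability observation above with the newly-ramified-prime conclusion from Theorem \ref{Z-ram}, this criterion applies directly and yields the finite index statement.

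The main obstacle, and the place where one must be careful, is verifying that \cite[Proposition 7.7]{QuadFin} indeed transfers without modification to the characteristic $p$ setting. The authors of the present paper have flagged this already: the proof uses discriminant-type calculations and relies on the tower being separable, which we have ensured via $p > 2$ and $\deg f = 2$. Once this compatibility is confirmed, the rest is a bookkeeping matter of noting that Theorem \ref{Z-ram} supplies exactly the ramification data that the proposition consumes.
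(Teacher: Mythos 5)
Your proposal has a genuine gap: you have misstated the hypothesis of \cite[Proposition 7.7]{QuadFin}. That proposition does not conclude finite index from the existence of a single prime that newly ramifies at each level of the tower $K \subset K_1 \subset K_2 \subset \cdots$; such a condition is far too weak. For instance, a tower in which each $K_n/K_{n-1}$ has degree $2$ (rather than the maximal $2^{2^{n-1}}$) could easily have a new ramified prime at every level while $G_\infty(\beta)$ has infinite index. What is required is that $\Gal(K_n/K_{n-1}) \cong C_2^{2^{n-1}}$ for all large $n$, and establishing this is where all the work lies.

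Concretely, one first uses eventual stability to write $f^m(x) - \beta = (x-\gamma_1)\cdots(x-\gamma_{2^m})$ with each $f^n(x) - \gamma_i$ irreducible over $K(\gamma_i)$, and then one must produce, for each large $n$, a family of $2^m$ primes $\fp_1, \dots, \fp_{2^m}$ of $L = K(\gamma_1,\dots,\gamma_{2^m})$ satisfying independence conditions: $\fp_i$ must ramify in the branch of the tower over $\gamma_i$ at level $n$ (via a primitive odd divisor of $f^n(0) - \gamma_i$ as in Lemma \ref{R}), while remaining unramified in all earlier levels over $\gamma_i$ \emph{and in all levels up to $n$ over every other $\gamma_j$}. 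Theorem \ref{Z-ram} furnishes only one prime, attached to $\beta$ itself, and says nothing about the cross-branch unramifiedness needed here. The paper's proof therefore cannot invoke Theorem \ref{Z-ram} as a black box; it goes back to Lemma \ref{delta} (for a lower bound on the weighted count of primitive odd divisors of $f^n(0)-\gamma_i$) combined with two applications of Lemma \ref{old} (to discard primes dividing earlier $f^{n'}(0)-\gamma_i$ and primes lying under the other branches $\gamma_j$), and a pigeonhole argument to extract the $2^m$ simultaneously valid primes. You have skipped this entirely, and without it the appeal to \cite[Proposition 7.7]{QuadFin} does not go through. The surrounding observations (completing the square when $p>2$, separability of the tower, transfer of the proposition to characteristic $p$) are correct and match the paper, but they are the easy part.
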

  \begin{proof}
    As in \cite{QuadFin}, it will suffice to show that for all sufficiently large $N$, we
    have
    \[ Gal(K_N/K_{N-1}) \cong C_2^{2^N},\]
    where $C_2$ is the
    cyclic group with two elements.  After a change of
    variables, we may assume that $f(x) = x^2 + c$ for some
    $c \in K \setminus k$.   
    
    Since $(f,\beta)$ is eventually stable, there is an $m$ such that
    $f^m(x) - \beta = (x-\gamma_1) \cdots (x-\gamma_{2^m})$ for
    $\gamma_i$ with the property that $f^n(x) - \gamma_i$ is
    irreducible over $K(\gamma_i)$ for all $n$ for $i=1, \dots, 2^m$, by
    \cite[Proposition]{BT}.  Let
    $L = K(\gamma_1, \dots, \gamma_{2^m})$.  It follows from
    \cite[Proposition 7.7]{QuadFin} and Lemma \ref{R} that we must
    have $\Gal(K_{n+m}/K_{n+m-1}) \cong [C_2]^{2^{m+n}}$ whenever
    there are primes $\fp_i$ of $L$, for $i=1, \dots, 2^m$, such that
   \begin{itemize}
   \item[(i)] $v_{\fp_i}(c) = v_{\fp_i}(\gamma_j) = 0$ for $j = 1,
     \dots 2^m$;
   \item[(ii)] $2 \nmid v_{\fp_i}(f^n(0) - \gamma_i)$;
     \item[(iii)] $v_{\fp_i}(f^{n'}(0) - \gamma_i) = 0$ for all $n'
       < n$; and
     \item[(iv)]  $v_{\fp_i}(f^{n'}(0) - \gamma_j) = 0$ for all $n'
       \leq n$ and $j \not= i$; 
     \end{itemize}

Note that condition (i) holds for all but finitely many primes
$\fp_i$.  Hence, we will be done if we can show that for all
sufficiently large $n$, there are $\fp_i$, for $i=1, \dots, 2^m$, that
satisfy conditions (ii), (iii), and (iv).  
     
   Now, fix a $\gamma_i$.  By Lemma \ref{delta},  there exists $\delta >
  0$ such that for all sufficiently
  large $n$, we have
  \begin{equation}\label{fin1}
  \sum_{\substack{v_{\fp}(f^n(0) - \gamma_i) > 0 \\ 2 \nmid
      v_{\fp}(f^n(0) - \gamma_i)}} N_\fp \geq \delta  d^n h_f(0).
\end{equation}
For any $n$, let $\cX(n)$ be the set of primes $\fp$ such that
$v_\fp(f^n(0) - \gamma_i) > 0$ and $v_\fp(f^{n'}(0) - \gamma_i) > 0$
for some $n' < n$.  Since $\gamma_i$ is not periodic and $h_f(\alpha)
> 0$, we may apply
Lemma \ref{old}.  We see then that for all sufficiently large $n$, we
have
   \begin{equation}\label{fin2} \sum_{\fp \in \cX(n)} \leq
     \frac{\delta}{3} d^n h_f(0).
   \end{equation}
For any $n$ and $i \not =j$, we let $\cY_j(n)$ be the set of primes
   $v_\fp(f^n(0) - \gamma_j) > 0$ and
   $v_\fp(f^{n'}(0) - \gamma_j) > 0$ for some $n' \leq n$.
   Since
   $f^{n'}(\gamma_j) \not= \gamma_i$ for all $n'$ and $i \not= j$, we may
   apply Lemma \ref{delta} again.  Since in addition we have
   $v_\fp(\gamma_i - \gamma_j) \not= 0$ for all but finitely many
   $\fp$ when $i \not= j$, we see that for all sufficiently large $n$,
   we have
   \begin{equation}\label{fin3} \sum_{j \not= i} \sum_{\fp \in \cY_j(n)} \leq
     \frac{\delta}{3} d^n h_f(0).
   \end{equation}
   Since $\delta h_f(0) > 0$, equations \eqref{fin1}, \eqref{fin2},
   and \eqref{fin3} imply that for any sufficiently large $n$, there
   is a prime $\fp_i$ satisfying conditions (ii), (iii), and (iv), and our proof
   is complete.
 \end{proof}
 
 \begin{remark}
   We note that while conditions (i) and
 (ii) above are weaker as stated than Condition R from \cite[Definition
 7.2]{QuadFin}, they do imply that the prime $\fp_i$ ramifies in
 $K(f^{-n}(\gamma_i))$ (by Lemma \ref{R}), which is what \cite[Proposition
 7.7]{QuadFin} requires.
\end{remark}

It should also be possible to prove a finite index result along the
lines of Theorem \ref{fin-index} more generally for non-isotrivial
polynomials of the form $x^d + c$, where $d > 2$ and $p \nmid d$ by
modifying techniques in \cite{QuadFin} and combining them
with our argument for Theorem \ref{growth} above.

\bibliographystyle{amsalpha}
\bibliography{IntBib}

\end{document}